\numberwithin{equation}{section}
\let\OLDthebibliography\thebibliography
\renewcommand\thebibliography[1]{
\OLDthebibliography{#1}
\setlength{\parskip}{1pt}
\setlength{\itemsep}{1pt plus 0.3ex}
}
\definecolor{ForestGreen}{rgb}{0.1,0.6,0.05}
\definecolor{EgyptBlue}{rgb}{0.063,0.1,0.6}
\def\wolp{\accentset{\circ}{W}_p^{1}}
\def\wol2{\accentset{\circ}{W}_2^{1}}
\newtheorem{thm}{Theorem}[section]
\newtheorem{lemma}[thm]{Lemma}
\newtheorem{proposition}[thm]{Proposition}
\theoremstyle{definition}
\newtheorem{remark}[thm]{Remark}
\title{
\vspace*{-0.5cm}
Second-order derivative of domain-dependent functionals along Nehari manifold trajectories
\\ \medskip}
\author[1]{Vladimir Bobkov\thanks{E-mail: \texttt{bobkov@kma.zcu.cz}}}
\affil[1]{{\small Department of Mathematics and NTIS, Faculty of Applied Sciences, University of West Bohemia, Univerzitn\'i 8, 301 00 Plze\v{n}, Czech Republic. Institute of Mathematics, Ufa Federal Research Centre, RAS, Chernyshevsky str.~112, 450008 Ufa, Russia}}
\author[2]{Sergey Kolonitskii\thanks{E-mail: \texttt{sergey.kolonitskii@gmail.com}}}
\affil[2]{{\small Saint Petersburg Electrotechnical University "LETI", 5 Professora Popova st., St.~Petersburg, 197376 Russia}}
\date{}
\begin{document}
\maketitle

\begin{abstract}
Assume that a family of domain-dependent functionals $E_{\Omega_t}$ possesses a corresponding family of least energy critical points $u_t$ which can be found as (possibly nonunique) minimizers of $E_{\Omega_t}$ over the associated Nehari manifold $\mathcal{N}(\Omega_t)$. 
We obtain a formula for the second-order derivative of $E_{\Omega_t}$ with respect to $t$ along Nehari manifold trajectories of the form $\alpha_t(u_0(\Phi_t^{-1}(y)) + t v (\Phi_t^{-1}(y)))$, $y \in \Omega_t$, where $\Phi_t$ is a diffeomorphism such that $\Phi_t(\Omega_0) = \Omega_t$, $\alpha_t \in \mathbb{R}$ is a $\mathcal{N}(\Omega_t)$-normalization coefficient, and $v$ is a corrector function whose choice is fairly general. 
Since $E_{\Omega_t}[u_t]$ is not necessarily twice differentiable with respect to $t$ due to the possible nonuniqueness of $u_t$, the obtained formula represents an upper bound for the corresponding second superdifferential, thereby providing a convenient way to study various domain optimization problems related to $E_{\Omega_t}$. 
An analogous formula is also obtained for the first eigenvalue of the $p$-Laplacian.
As an application of our results, we investigate the behaviour of the first eigenvalue of the Laplacian with respect to particular perturbations of rectangles.  
	
\par
\smallskip
\noindent {\bf  Keywords}: 
shape Hessian; second-order shape derivative; domain derivative; Hadamard formula; perturbation of boundary; superlinear nonlinearity; Nehari manifold; least energy solution; first eigenvalue.

	\noindent {\bf MSC2010}: 
	35J92,	
	49Q10,	
	35B30,	
	49K30.	
\end{abstract}

\section{Introduction}\label{sec:introdution}
To outline an idea of the paper, let us start with a discussion of the model Lane-Emden problem
\begin{equation}\label{Dm}
\left\{
\begin{aligned}
-\Delta_p u &= |u|^{q-2} u 
&&\text{in } \Omega, \\
u &= 0  &&\text{on } \partial \Omega,
\end{aligned}
\right.
\end{equation}	
where $\Delta_p u := \text{div}\left(|D u|^{p-2} D u \right)$ stands for 	 the $p$-Laplacian, $2 \leq p < q < p^*$, and $\Omega \subset \mathbb{R}^N$ is a bounded domain with the boundary $\partial \Omega$, $N \geq 2$.
Here $p^* = \frac{N p}{N-p}$ for $p < N$, and $p^* = +\infty$ for $p \geq N$.
It is well-known (see, e.g., \cite{peral}) that \eqref{Dm} has infinitely many (weak) solutions, among which we will be interested in the so-called \textit{least energy solutions} (also known as \textit{ground states}). Such solutions can be defined as minimizers of the problem
$$
\mu(\Omega) := \inf_{w \in \mathcal{N}(\Omega)} E_\Omega[w]
$$
(see, for instance, \cite{GrumiauParini}), 
where $E_\Omega: \wolp(\Omega) \mapsto \mathbb{R}$ is the energy functional associated with \eqref{Dm}:
$$
E_\Omega[w] = \frac{1}{p}\int_\Omega |D w|^p \,dx - \frac{1}{q}\int_\Omega |w|^q \,dx,
$$
and $\mathcal{N}(\Omega)$ is the corresponding Nehari manifold:
$$
\mathcal{N}(\Omega) = \left\{
w \in \wolp(\Omega) \setminus \{0\}:~ E_\Omega'[w] w = 0
\right\}.
$$

A natural class of optimization problems related to \eqref{Dm} consists in optimizing the value of $\mu(\Omega)$ over a set of admissible domains. For instance, the generalized Faber-Krahn inequality (see, e.g., \cite{Brasco}) can be  employed to show that $\mu(\Omega) \geq \mu(B)$ if $B$ is a ball of the same volume as $\Omega$. Moreover, the equality $\mu(\Omega) = \mu(B)$ holds if and only if $\Omega = B$. 
That is, $B$ is a global minimizer for $\mu(\Omega)$ among the set of all domains with equal volume.  
Somewhat opposite situation occurs if we consider a class of spherical shells $\Omega_t = B_R \setminus \overline{B}_r$, where $R>r$ and $t \in [0, R-r)$ is the distance between centres of the balls $B_R$, $B_r$ of radius $R, r$, respectively. In this case, $\mu(\Omega_0) \geq \mu(\Omega_t)$ and $\mu(\Omega_t)$ strictly decreases with respect to $t$, see \cite{BK}.
That is, the concentric spherical shell $\Omega_0$ is the global maximizer for $\mu(\Omega_t)$ among $t \in [0,R-r)$. We refer the reader to \cite{abs,ak} for the analogous result for the first eigenvalue of the $p$-Laplacian and for relevant references. 

In the proof of the latter optimization result, the following Hadamard-type estimate for $\mu(\Omega)$ was used. 
Consider a smooth perturbation $\Phi_t(\Omega)$ of the domain $\Omega$ driven by a family of diffeomorphisms
\begin{equation*}
\Phi_t(x) = x + t R(x), 
\quad 
R \in C^1(\mathbb{R}^N, \mathbb{R}^N).
\end{equation*}
If $u$ is a minimizer of $\mu(\Omega)$ and $\alpha_t \in \mathbb{R}$ is chosen in such a way that $\alpha_t u(\Phi_t^{-1}(\cdot)) \in \mathcal{N}(\Phi_t(\Omega))$, then 
\begin{equation}
\label{eq:had0}
\limsup_{t \to 0+} \frac{\mu(\Phi_t(\Omega)) - \mu(\Omega)}{t} 
\leq 
\left.
\frac{\partial E_{\Phi_t(\Omega)}(\alpha_t u(\Phi_t^{-1}(\cdot)))}{\partial t} \right|_{t=0}= - \frac{p-1}{p} \int_{\partial \Omega} 
\left| \frac{\partial u}{\partial n} 
\right|^p 
\left<R, n\right> \,d\sigma,
\end{equation}
provided $\partial \Omega$ is sufficiently regular, where $n$ is the outward unit normal vector to $\partial \Omega$, see \cite[Theorem 1.1]{BK}.
Notice that $\mu(\Phi_t(\Omega))$ is continuous but can be nondifferentiable with respect to $t$ since the corresponding minimizer is not necessarily unique (see \cite{coffman,Kolon,Nazar2} and a discussion in \cite[Remark 3.5]{BK}). Hence, in general, only estimates of $\mu(\Phi_t(\Omega))$ through the finite differences as in \eqref{eq:had0} are possible. 

However, in a variety of applications, consideration of the first-order approximation of $\mu(\Omega)$ does not bring a sufficient information to obtain an optimality or stability of the domain, and higher-order approximations have to be studied.
The main aim of the present paper is to provide an upper estimate for $\mu(\Phi_t(\Omega))$ in terms of the second-order derivative of $E_{\Phi_t(\Omega)}$ with respect to $t$ along trajectories of the form $\alpha_t(u(\Phi_t^{-1}(y)) + t v (\Phi_t^{-1}(y)))$, $y \in \Phi_t(\Omega)$, where $\alpha_t \in \mathbb{R}$ is a normalization coefficient such that $\alpha_t(u(\Phi_t^{-1}(\cdot)) + t v (\Phi_t^{-1}(\cdot))) \in \mathcal{N}(\Phi_t(\Omega))$, and $v$ is a \textit{corrector} whose choice is unrestricted. 

Note that the corrector $v$ is reminiscent of the concept of \textit{material derivative} $\dot{u}$ of $u$ \cite{sokolowski} which appears in exact formulas for the second-order domain derivative of various functionals whenever such derivative exists (see, e.g., by no means complete list of works \cite{bandle,bouchitte,garabedian,grinfeld,novruzipierre,sokolowski}). 
Roughly speaking, if $\{U_t\}$ is a sufficiently regular with respect to $t$ family of critical points of such a functional, then the material derivative $\dot{U}_0$ can be defined by
$$
U_t(\Phi_t(x)) = U_0(x) + t \dot{U}_0(x) + o(t),
\quad 
x \in \Omega.
$$
In fact, $\dot{U}_0$ can be seen as an \textit{optimal} corrector.
However, in order to use the exact formulas for the second-order domain derivative in particular applications, one is forced to solve a boundary value problem to determine the material derivative of $u$, which is usually a nontrivial task by itself (see Section \ref{sec:optimal} below for a more detailed discussion of this issue). 
The main idea pursued in this paper is that one does not need to find an optimal corrector if he can guess its good approximation based on a physical or geometric intuition. 
A good approximation of the optimal corrector would yield a good upper bound for the second-order finite difference of $\mu(\Phi_t(\Omega))$. 
In particular, if this upper bound is negative and the right-hand side of \eqref{eq:had0} equals zero, then $\mu(\Phi_t(\Omega)) < \mu(\Omega)$ for sufficiently small $t$, which implies the nonoptimality of $\Omega$.

Let us mention that the results of our paper are also applied to sublinear problems of the type \eqref{Dm}, as well as to problems with convex-concave nonlinearities \cite{abc} (for a suitable range of a parameter), since such problems possess least energy solutions. 
Moreover, apart from problems of the type \eqref{Dm}, we obtain in the same way a second-order estimate for the the first eigenvalue of the $p$-Laplacian
\begin{equation}\label{eq:nu0}
\lambda_1(\Omega) = 
\min_{w \in \wolp(\Omega) \setminus \{0\}} 
\frac{\int_{\Omega} |D w|^p \,dx}{\int_{\Omega} |w|^p  \,dx}.
\end{equation}
Note that $\lambda_1(\Phi_t(\Omega))$ is at least once differentiable with respect to $t$, see \cite{garcia, lamberti}.
Moreover, the first eigenfunction of the $p$-Laplacian and least energy solutions of \eqref{Dm} are conceptually the same objects, see \cite{GrumiauParini} for rigorous results in this direction. 

\medskip
The present paper is organized as follows. 
In Section \ref{sec:main}, we state our problem in the full generality and discuss main results. 
In Sections \ref{sec:first-order} and \ref{sec:second_derivative}, we treat the first-order and second-order estimates for $\mu(\Phi_t(\Omega))$, respectively.
Section \ref{sec:optimal} is devoted to the formal discussion of the concept of optimal corrector.
In Section \ref{sec:special}, we consider two particular examples of the main result: the problem \eqref{Dm} and the first eigenvalue of the $p$-Laplacian \eqref{eq:nu0}.
In Section \ref{sec:lower_dim}, we further simplify obtained formulas either in the planar case $N=2$ or under additional assumptions on the perturbation $\Phi_t$. 
Finally, in Section \ref{sec:applications}, we apply our results to study the behaviour of the first eigenvalue of the Laplacian in rectangles under specific perturbations. In some cases, we are able to compare values of the second-order estimate for $\lambda_1(\Omega)$ computed for optimal and nonoptimal correctors.

\section{Main results}\label{sec:main}

Consider a bounded domain $\Omega \subset \mathbb{R}^N$ with the boundary $\partial \Omega$, where $N \geq 2$. Let $R$ and $\widetilde R$ be smooth vector fields over $\overline\Omega$. 
Define a deformed domain $\Omega_t$ as $\Omega_t = \Phi_t(\Omega)$, where
$$
\Phi_t(x) := x + t R(x) + \frac{1}{2} t^2 \widetilde R(x),
\quad 
|t| < \delta,
$$ 
and $\delta>0$ is sufficiently small.

We will work with a general energy functional $E_t$ defined by 
\begin{equation*}
E_t[w] = \int_{\Omega_t} L(w, Dw) \,dy \equiv \int_{\Omega} L(w(\Phi_t(x)), D w(\Phi_t(x)) \cdot \Psi_t(x)) \varphi_t(x) \,dx,
\end{equation*}
where $D$ denotes the corresponding Jacobi matrix, 
\begin{equation}\label{eq:m_and_Psi}
\varphi_t(x) = {\rm det} (D \Phi_t(x))
\quad
\text{and}
\quad
\Psi_t(x) 
= (D(\Phi_t^{-1}(y)))|_{y = \Phi_t(x)}.
\end{equation}
We always assume that $E_t$ obeys the following set of assumptions:
\begin{enumerate}[label={\rm(\roman*)}]
	\item\label{as:2} 
	$E_t \in C^2(\wolp(\Omega_t), \mathbb{R})$ for some $p>1$, and $E_{(\cdot)}[v(\Phi_{(\cdot)}^{-1}(y))] \in C^2((-\delta,\delta),\mathbb{R})$ for any $v \in \wolp(\Omega)$.
	\item\label{as:3} $E_t$ possesses a nonzero least energy critical point, and any such critical point is a minimizer of $E_t$ over the Nehari manifold
	\begin{equation*}
	\mathcal{N}(\Omega_t)
	= 
	\left\{ 
	w \in \wolp(\Omega_t)\setminus \{0\}:~ 
	E_t'[w]w = \int_{\Omega_t} \left((D_z L(w, D w), D w) + L_u(w, D w) w \right) dx = 0
	\right\}.
	\end{equation*}
	We will denote the corresponding least energy critical level as $\mu(\Omega_t)$, that is,
	\begin{equation}\label{eq:mut}
	\mu(\Omega_t) := \inf_{w \in \mathcal{N}(\Omega_t)} E_t[w].
	\end{equation}
	\item\label{as:3.5} For any $w \in \wolp(\Omega)\setminus\{0\}$ there exits $\alpha > 0$ such that $\alpha w \in \mathcal{N}(\Omega)$. 
	Moreover, if $w \in \mathcal{N}(\Omega)$ and $v \in \wolp(\Omega)$, then there exist $\alpha_{(\cdot)}, \beta_{(\cdot)} \in C^1((-\delta,\delta), (0,+\infty))$ such that $\alpha_0=1$,  $\alpha_t \left(w(\Phi_t^{-1}(\cdot)) + t v(\Phi_t^{-1}(\cdot))\right)\in \mathcal{N}(\Omega_t)$, and $\beta_0=1$, $\beta_s \left(w + s v\right)\in \mathcal{N}(\Omega)$.
	\item\label{as:4} There exists a nonzero least energy critical point $u$ of $E_0$ such that 
	$E_0''[u](u,u) \neq 0$. 
\end{enumerate}
\begin{remark}
In Section \ref{sec:special} below, we show that $E_t$ with $L(s,z) = \frac{1}{p}|z|^p - \frac{1}{q}|s|^{q}$, where $p,q \geq 2$, $q < p^*$, and $q \neq p$, satisfies the required assumptions \ref{as:2}-\ref{as:4}.  
Such example of $E_t$ corresponds to the problem \eqref{Dm} and can be kept in mind as the main model case.
\end{remark}

Hereinafter, we will always denote by $u \in \wolp(\Omega)$ a least energy critical point of $E_0$ which satisfies the assumption \ref{as:4}.
Consider a family of functions 
\begin{equation}\label{eq:u_t}
u_t(x) = \alpha_t (u(x) + t v(x)),
\quad 
x \in \Omega,
\end{equation}
and its transposition to $\Omega_t$ defined by
\begin{equation*}
U_t(y) := u_t(\Phi_t^{-1}(y)) 
= 
\alpha_t \left(u(\Phi_t^{-1}(y)) + t v (\Phi_t^{-1}(y))\right),
\quad 
y \in \Omega_t,
\end{equation*}
Here, $v \in \wolp(\Omega)$ is an arbitrary function called a \textit{corrector}, and $\alpha_t > 0$ is chosen in such a way that $U_t \in \mathcal{N}(\Omega_t)$ (see the assumption \ref{as:3.5}).	
Thus, the family $\{U_t\}$ can be called a \textit{Nehari manifold trajectory} emanating from $u$.
After the change of variables, the Nehari manifold constraint on $U_t$ reads as
\begin{equation}\label{Nehari_constraint}
0 = E_t'[U_t]U_t
= \int_{\Omega} (D_z L(u_t, D u_t \cdot \Psi_t),D u_t \cdot \Psi_t) \varphi_t \,dx + \int_{\Omega} L_u(u_t, D u_t \cdot \Psi_t) u_t \varphi_t \,dx.
\end{equation}

Let us denote 
\begin{equation}\label{eq:mt}
m(t) := E_t[U_t] = \int_{\Omega} L(u_t, D u_t \cdot \Psi_t) \varphi_t \,dx.
\end{equation}
By the definition \eqref{eq:mut} of $\mu(\Omega_t)$ and the assumption $U_t \in \mathcal{N}(\Omega_t)$, we have 
$$
\mu(\Omega) = m(0)
\quad
\text{and}
\quad
\mu(\Omega_t) \leq m(t)
\quad \text{for all} \quad
|t| < \delta.
$$ 
Therefore, Taylor's theorem applied to $m(t)$ yields
\begin{equation}\label{eq:taylor}
\mu(\Omega_t) \leq \mu(\Omega) + \dot{m}(0) t + \frac{\ddot{m}(0)}{2} t^2 + o(t^2).
\end{equation}
The first-order derivative $\dot{m}(0)$ in the model case of the Lane-Emden problem \eqref{Dm} with a sufficiently smooth boundary $\partial \Omega$ has the form \eqref{eq:had0}. 
In the general case, we give the following result.
\begin{proposition}\label{prop:first-derivative}
	Let \ref{as:2}-\ref{as:3.5} be satisfied and let $u \in \wolp(\Omega)$ be a least energy critical point of $E_0$. 
	Then 
	\begin{equation}\label{eq:dotm01}
	\dot m(0) 
	=
	\int_{\Omega} L(u, D u) \, {\rm div} (R) \,dx - \int_{\Omega} (D_z L(u, D u ),D u \cdot D R) \,dx.
	\end{equation}
\end{proposition}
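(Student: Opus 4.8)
The plan is to differentiate $m(t)$ directly in its reference-domain form \eqref{eq:mt}, treating the integrand $L(u_t, Du_t\cdot\Psi_t)\varphi_t$ as a function of $t$ through three channels: the density $u_t=\alpha_t(u+tv)$, the pulled-back gradient $Du_t\cdot\Psi_t$, and the Jacobian weight $\varphi_t$. Assumption \ref{as:2} guarantees that $t\mapsto E_{(\cdot)}[U_{(\cdot)}]$ is at least $C^1$, and \ref{as:3.5} gives $\alpha_{(\cdot)}\in C^1$ with $\alpha_0=1$, so $m$ is differentiable and I may compute $\dot m(0)$ by differentiating under the integral sign and applying the chain rule to $L$.

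First I would record the elementary derivatives at $t=0$. From $\Phi_t(x)=x+tR(x)+\tfrac12 t^2\widetilde R(x)$ one has $D\Phi_t=I+tDR+\tfrac12 t^2 D\widetilde R$, hence $\varphi_0=1$ and $\dot\varphi_0={\rm tr}(DR)={\rm div}(R)$; and since $\Psi_t=(D\Phi_t)^{-1}$ by \eqref{eq:m_and_Psi}, the matrix-inverse differentiation formula yields $\Psi_0=I$ and $\dot\Psi_0=-DR$. From $u_t=\alpha_t(u+tv)$ I get $u_0=u$ and $\dot u_0=\dot\alpha_0\,u+v$, so that $\tfrac{d}{dt}(Du_t\cdot\Psi_t)\big|_{t=0}=\dot\alpha_0\,Du+Dv-Du\cdot DR$.

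Substituting these into the chain-rule expansion splits $\dot m(0)$ into four pieces: a term proportional to $\dot\alpha_0$, namely $\dot\alpha_0\int_\Omega\big(L_u(u,Du)\,u+(D_zL(u,Du),Du)\big)\,dx$; the corrector term $\int_\Omega\big(L_u(u,Du)\,v+(D_zL(u,Du),Dv)\big)\,dx$; and the two geometric terms $\int_\Omega L(u,Du)\,{\rm div}(R)\,dx$ and $-\int_\Omega(D_zL(u,Du),Du\cdot DR)\,dx$. The first piece is exactly $E_0'[u]u$ and the second is exactly $E_0'[u]v$, in the sense of the functional derivative written out in \ref{as:3}.

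The decisive step, and the one I would emphasize, is that both of these vanish: since $u$ is a least energy critical point of $E_0$, we have $E_0'[u]=0$ in the dual of $\wolp(\Omega)$, so $E_0'[u]v=0$ for the arbitrary corrector $v$ and, in particular, $E_0'[u]u=0$ (the latter being also just the Nehari membership $u\in\mathcal{N}(\Omega)$). Thus the unknown normalization slope $\dot\alpha_0$ and the entire dependence on $v$ drop out, and only the two geometric terms survive, which gives precisely \eqref{eq:dotm01}. There is no deep analytic obstacle beyond justifying the termwise differentiation; the points requiring care are the correct evaluation $\dot\Psi_0=-DR$ and the bookkeeping that recognizes the two surviving critical-point identities, which is exactly what renders the final formula independent of both the corrector $v$ and the normalization derivative $\dot\alpha_0$.
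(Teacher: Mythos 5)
Your proposal is correct and follows essentially the same route as the paper: differentiate under the integral sign using $\dot\varphi_0={\rm div}(R)$ and $\dot\Psi_0=-DR$, expand $\dot u_0=\dot\alpha_0 u+v$, and observe that the $\dot\alpha_0$-term and the corrector term are $E_0'[u]u$ and $E_0'[u]v$, both vanishing by criticality of $u$. The only cosmetic difference is that the paper keeps the expression for $\dot m(t)$ at general $t$ (using the Nehari constraint $E_t'[U_t]U_t=0$ to absorb the $\dot\alpha_t$-term) because that form is reused for the second derivative, whereas you evaluate directly at $t=0$; for this proposition the two computations coincide.
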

\begin{remark}\label{rem:0}
	Let $\Omega$ be of class $C^1$. If either $u \in C^1(\overline{\Omega}) \cap C^2(\Omega)$, or $u \in C^1(\overline{\Omega})$ and $L(s,\cdot)$ is strictly convex for any $s \in \mathbb{R}$, then the integrals in \eqref{eq:dotm01} can be expressed via the Pohozaev identity (see \cite[Theorems 1 and 2]{degiovanni}) as integrals over the boundary of $\Omega$:
	\begin{equation}\label{eq:m0poh}
	\dot m(0) 
	=
	\int_{\partial \Omega} L(0,Du) (R, n) \, d\sigma
	-
	\int_{\partial \Omega} (D_z L(0,Du), Du) (R, n) \, d\sigma,
	\end{equation}
	where $n$ is the outward unit normal vector to $\partial \Omega$.
\end{remark}

The main aim of the present paper is to obtain a formula for the second-order derivative $\ddot{m}(0)$, which allows to estimate $\mu(\Omega_t)$ from above for sufficiently small $|t|$ provided $\dot{m}(0)=0$.

Let us introduce the symmetric bilinear form associated with the second-order variation of $E_0$:
\begin{align}
\notag
\left< g, h \right>_0 
:= 
E_0''[u](g,h) 
&= 
\int_{\Omega} (D^2_{zz}L(u, D u), D g \otimes D h) \,dx 
+ 
\int_{\Omega} (D_z L_u(u, D u), D g) h \,dx 
\\
\label{eq:<>0}
&+ 
\int_{\Omega} (D_z L_u(u, D u),D h) g \,dx 
+ 
\int_{\Omega} L_{uu}(u, D u) g h \,dx,
\quad 
g, h \in \wolp(\Omega),
\end{align}
and the functional $Q$ defined by
\begin{align}
\notag
Q[h] 
= 
&-\int_{\Omega} (D^2_{zz} L(u, D u), D h  \otimes D u \cdot DR) \,dx 
- \int_{\Omega} (D_z L_u(u, D u), D u \cdot DR) h \,dx 
\\
\notag
&- \int_{\Omega} (D_z L(u, D u),D h \cdot DR) \,dx 
+ \int_{\Omega} (D_z L(u, D u), D h) \, {\rm div}(R) \,dx 
\\
\label{eq:Qh}
&+ \int_{\Omega} L_u(u, D u) h \, {\rm div}(R) \,dx,
\quad  h \in \wolp(\Omega).
\end{align}

Our main result is the following theorem.
\begin{thm}\label{thm:main0}
	Let \ref{as:2}-\ref{as:3.5} be satisfied and let $u \in \wolp(\Omega)$ be a least energy critical point of $E_0$ satisfying \ref{as:4}. 
	Then for any corrector $v \in \wolp(\Omega)$ there holds
	\begin{align}
	\notag
	\ddot m(0) 
	&=
	\int_{\Omega} L(u, D u ) \, {\rm div} (\widetilde{R}) \,dx
	-
	\int_{\Omega} (D_z L(u, D u ),D u \cdot D \widetilde{R}) \,dx
	\\
	\notag
	&-2 \int_{\Omega} (D_z L(u, D u ), D u \cdot DR) \, {\rm div}(R) \,dx 
	+ 2 \int_{\Omega} (D_z L(u, D u ),D u \cdot DR \cdot DR) \,dx
	\\
	\notag
	&
	+
	2 \int_{\Omega} L(u, D u ) \chi_2(DR) \,dx
	+ \int_{\Omega} (D^2_{zz} L(u, D u ),D u \cdot DR \otimes D u \cdot DR) \,dx 
	\\
	\label{ddm_intermediate_main0}
	&
	- 
	\frac {(Q[u])^2}{\left< u,u \right>_0} 
	+ 
	2 Q\left[ v - \frac {\left< u,v \right>_0}{\left< u,u \right>_0} u  \right] 
	+
	\left< v - \frac{\left< u, v \right>_0}{\left< u,u \right>_0} u, v - \frac{\left< u,v \right>_0}{\left< u,u \right>_0} u \right>_0,
	\end{align}
	where 
	\begin{equation}\label{eq:chi2}
	\chi_2(DR) = \sum_{i<j} \left( (DR)_{ii} (DR)_{jj} - (DR)_{ij} (DR)_{ji} \right).
	\end{equation}
\end{thm}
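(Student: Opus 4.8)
The plan is to exploit the variational characterization of $\alpha_t$ and reduce the whole computation to differentiating a scalar function of two variables. Introduce the auxiliary functional
$$\mathcal{E}(t,\alpha) := E_t\big[\alpha\,(u+tv)(\Phi_t^{-1}(\cdot))\big],$$
so that $m(t)=\mathcal{E}(t,\alpha_t)$. The crucial observation is that differentiating $E_t[\alpha V_t]$ in the scaling parameter gives $\partial_\alpha \mathcal{E}(t,\alpha)=E_t'[\alpha V_t]V_t=\tfrac1\alpha E_t'[\alpha V_t](\alpha V_t)$ with $V_t=(u+tv)(\Phi_t^{-1}(\cdot))$; hence the Nehari constraint \eqref{Nehari_constraint} is exactly the criticality condition $\partial_\alpha\mathcal{E}(t,\alpha_t)=0$. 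By \ref{as:2} and \ref{as:3.5}, $\mathcal{E}$ is $C^2$ and $\alpha_{(\cdot)}\in C^1$, so I may differentiate $m$ through $\mathcal{E}$.

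Carrying out the envelope computation, the first derivative loses its dependence on $\dot\alpha_t$: $\dot m(t)=\partial_t\mathcal{E}(t,\alpha_t)$, in agreement with Proposition \ref{prop:first-derivative} (the corrector contribution $E_0'[u]v$ vanishing since $u$ is critical). Differentiating once more and eliminating $\dot\alpha_0$ via the differentiated constraint $\partial_{t\alpha}\mathcal{E}(0,1)+\partial_{\alpha\alpha}\mathcal{E}(0,1)\,\dot\alpha_0=0$ gives the compact identity
$$\ddot m(0)=\partial_{tt}\mathcal{E}(0,1)-\frac{\big(\partial_{t\alpha}\mathcal{E}(0,1)\big)^2}{\partial_{\alpha\alpha}\mathcal{E}(0,1)},$$
which is legitimate because $\partial_{\alpha\alpha}\mathcal{E}(0,1)=E_0''[u](u,u)=\langle u,u\rangle_0\neq 0$ by \ref{as:4}. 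It remains to evaluate the three partial derivatives at $(t,\alpha)=(0,1)$, where $\Phi_0=\mathrm{id}$, $\Psi_0=I$, $\varphi_0=1$ and $u_0=u$.

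The mixed derivative is $\partial_{t\alpha}\mathcal{E}(0,1)=\frac{d}{dt}\big(E_t'[V_t]V_t\big)\big|_{t=0}$. Expanding it with $\partial_t\Psi_t|_0=-DR$, $\partial_t\varphi_t|_0=\mathrm{div}(R)$ and $\partial_t(u+tv)|_0=v$, and splitting the integrand into a part carrying $v$ and a part carrying $DR$, I expect the $DR$-part to collapse to $Q[u]$ and the $v$-part to $\langle u,v\rangle_0+E_0'[u]v$; the term $E_0'[u]v$ vanishes, leaving $\partial_{t\alpha}\mathcal{E}(0,1)=Q[u]+\langle u,v\rangle_0$. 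For $\partial_{tt}\mathcal{E}(0,1)$ it is convenient to regard $\mathcal{E}(t,1)$ as the restriction to $s=t$ of the three-parameter functional $E_t[(u+sv)(\Phi_t^{-1}(\cdot))]$, so that $\partial_{tt}\mathcal{E}(0,1)$ splits as (domain--domain)$\,+2\,$(domain--corrector)$\,+\,$(corrector--corrector). The same bookkeeping identifies the domain--corrector block with $Q[v]$ and the corrector--corrector block with $E_0''[u](v,v)=\langle v,v\rangle_0$.

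The remaining and most laborious step is the purely geometric domain--domain block $\tfrac{d^2}{dt^2}E_t[u(\Phi_t^{-1}(\cdot))]|_{t=0}$. This requires the second-order Taylor expansions $\varphi_t=1+t\,\mathrm{div}(R)+t^2\big(\tfrac12\mathrm{div}(\widetilde R)+\chi_2(DR)\big)+o(t^2)$ and $\Psi_t=I-t\,DR+t^2\big(DR\!\cdot\!DR-\tfrac12 D\widetilde R\big)+o(t^2)$, fed into the second-order chain rule for $L(u,Du\cdot\Psi_t)\varphi_t$. I expect this to reproduce precisely the first two lines of \eqref{ddm_intermediate_main0} together with the $\chi_2(DR)$ term; this tracking of the $\widetilde R$-, $DR\!\cdot\!DR$- and $\mathrm{div}(R)$-contributions is the main obstacle, since it is where all the tensorial cancellations must be checked. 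Finally, substituting the three evaluated partials into the envelope identity and completing the square, namely rewriting $\big(Q[u]+\langle u,v\rangle_0\big)^2/\langle u,u\rangle_0$ through $Q[u]^2/\langle u,u\rangle_0$ and the shifted corrector $v-\tfrac{\langle u,v\rangle_0}{\langle u,u\rangle_0}u$, turns $2Q[v]+\langle v,v\rangle_0$ into $2Q[\,v-\tfrac{\langle u,v\rangle_0}{\langle u,u\rangle_0}u\,]+\langle v-\tfrac{\langle u,v\rangle_0}{\langle u,u\rangle_0}u,\,v-\tfrac{\langle u,v\rangle_0}{\langle u,u\rangle_0}u\rangle_0$ and yields exactly \eqref{ddm_intermediate_main0}.
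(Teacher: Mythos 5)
Your proposal is correct, and every quantitative claim in it checks out against the paper's computation: $\partial_{\alpha\alpha}\mathcal{E}(0,1)=E_0''[u](u,u)=\langle u,u\rangle_0\neq 0$ by \ref{as:4}; $\partial_{t\alpha}\mathcal{E}(0,1)=Q[u]+\langle u,v\rangle_0$, which is the paper's identity \eqref{eq:alphadot} in the form $\dot\alpha_0\langle u,u\rangle_0+\langle u,v\rangle_0+Q[u]=0$; your $(t,s)$-splitting of $\partial_{tt}\mathcal{E}(0,1)$ yields exactly the four deformation--deformation integrals of \eqref{eq:dashed1} at $t=0$ plus $2Q[v]+\langle v,v\rangle_0$; your Taylor expansions of $\varphi_t$ and $\Psi_t$ agree with \eqref{eq:dotphi}, \eqref{eq:ddotphi}, \eqref{eq:dotpsi}; and expanding $\big(Q[u]+\langle u,v\rangle_0\big)^2/\langle u,u\rangle_0$ does reproduce the projected terms in \eqref{ddm_intermediate_main0}. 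The difference from the paper is organizational, but genuinely clarifying. The paper differentiates $m(t)$ head-on: it uses the constraint $E_t'[U_t]U_t=0$ to rewrite $\dot m(t)$ as \eqref{eq:dotmt1} (this is your envelope step in disguise, and is exactly what keeps $\ddot\alpha_t$ out of the computation), differentiates once more, regroups the many resulting terms into the blocks \eqref{eq:dashed1}, $Q$, and $\langle\cdot,\cdot\rangle_t$, and only at the end substitutes $\dot\alpha_0$ from \eqref{eq:alphadot} and completes the square by hand. Your two-variable Schur-complement identity $\ddot m(0)=\partial_{tt}\mathcal{E}(0,1)-\big(\partial_{t\alpha}\mathcal{E}(0,1)\big)^2/\partial_{\alpha\alpha}\mathcal{E}(0,1)$ performs the same elimination a priori, which buys structural transparency: it makes plain why only $\dot\alpha_0$, never $\ddot\alpha_0$, enters even though $\alpha_{(\cdot)}$ is merely $C^1$ by \ref{as:3.5}, and why the answer must have the ``quadratic form minus square'' shape that later drives Propositions \ref{lem:second-derivative-uv=0} and \ref{thm:main}. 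What the paper's longhand route buys in exchange is the explicit intermediate expression \eqref{ddm_intermediate}, which it recycles with $\alpha_t\equiv 1$ for the eigenvalue functional in Section \ref{sec:special_eigenvalue}. Two harmless caveats: your envelope formula needs $\mathcal{E}$ jointly $C^2$ near $(0,1)$ with symmetric mixed partials, which is the same implicit regularity the paper invokes when it differentiates under the integral sign; and the one step you defer, the domain--domain block, is indeed routine once your expansions are in place, since feeding $\ddot\varphi_0=2\chi_2(DR)+{\rm div}(\widetilde R)$ and $\ddot\Psi_0=2\,DR\cdot DR-D\widetilde R$ into \eqref{eq:dashed1} at $t=0$ gives precisely the first six terms of \eqref{ddm_intermediate_main0}.
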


Let us now obtain two simplifications of \eqref{ddm_intermediate_main0} under additional assumptions on the corrector.
Notice that $\ddot m(0)$ depends on $v - \frac {\left< u, v \right>_0}{\left< u,u \right>_0} u$, i.e., $v$ minus its projection onto $u$. 
Thus, in order to simplify \eqref{ddm_intermediate_main0}, it is natural to require the orthogonality of $u$ and $v$ in the sense that 
\begin{equation}\label{eq:uv=0}
\left< u, v \right>_0 \equiv E_0''[u](u,v) = 0.
\end{equation}
Under this assumption, we arrive at the following result.
\begin{proposition}\label{lem:second-derivative-uv=0}
	Let \ref{as:2}-\ref{as:3.5} be satisfied and let $u \in \wolp(\Omega)$ be a least energy critical point of $E_0$ satisfying \ref{as:4}.  
	If a corrector $v \in \wolp(\Omega)$ is such that  $\left< u, v \right>_0 = 0$, then
	\begin{align}
	\notag
	\ddot m(0) 
	&=
	\int_{\Omega} L(u, D u ) \,{\rm div} (\widetilde{R}) \,dx
	-
	\int_{\Omega} (D_z L(u, D u ),D u \cdot D \widetilde{R}) \,dx
	\\
	\notag
	&-2 \int_{\Omega} (D_z L(u, D u ), D u \cdot DR) \, {\rm div}(R) \,dx 
	+ 2 \int_{\Omega} (D_z L(u, D u ),D u \cdot DR \cdot DR) \,dx
	\\
	\notag
	&
	+
	2 \int_{\Omega} L(u, D u ) \chi_2(DR) \,dx
	+ \int_{\Omega} (D^2_{zz} L(u, D u ),D u \cdot DR \otimes D u \cdot DR) \,dx 
	\\
	\label{ddm_unoptimized0}
	&- 
	\frac {(Q[u])^2}{\left< u,u \right>_0} 
	+ 
	2 Q[v]
	+  
	\left< v,v \right>_0.
	\end{align}
\end{proposition}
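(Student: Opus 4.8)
The plan is to derive this proposition directly from Theorem \ref{thm:main0} by specializing the general formula \eqref{ddm_intermediate_main0} to a corrector satisfying the orthogonality condition \eqref{eq:uv=0}. No new computation is required: the argument reduces to a single substitution, supported by two elementary structural observations about the objects appearing in \eqref{ddm_intermediate_main0}.

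First I would record that assumption \ref{as:4} guarantees $\left< u,u \right>_0 = E_0''[u](u,u) \neq 0$, so that the quotient $\frac{\left< u,v \right>_0}{\left< u,u \right>_0}$ is well-defined. Under the hypothesis $\left< u,v \right>_0 = 0$ this quotient vanishes, whence the projected corrector collapses to
\[
v - \frac{\left< u,v \right>_0}{\left< u,u \right>_0}\, u = v.
\]
This is the only place where the orthogonality assumption is used.

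Next I would invoke two properties of the terms in \eqref{ddm_intermediate_main0}. By \eqref{eq:Qh}, the functional $Q[\cdot]$ is linear in its argument $h$, since each of its five integrands depends linearly on $h$ (either through $h$ itself or through $Dh$), with $u$ held fixed; and by \eqref{eq:<>0}, $\left< \cdot,\cdot \right>_0$ is a symmetric bilinear form on $\wolp(\Omega)$. Substituting the identity above into the final two terms of \eqref{ddm_intermediate_main0} then gives
\[
2\, Q\!\left[ v - \frac{\left< u,v \right>_0}{\left< u,u \right>_0}\, u \right] = 2\, Q[v]
\quad\text{and}\quad
\left\langle v - \frac{\left< u,v \right>_0}{\left< u,u \right>_0}\, u,\; v - \frac{\left< u,v \right>_0}{\left< u,u \right>_0}\, u \right\rangle_0 = \left< v,v \right>_0.
\]
The remaining summands of \eqref{ddm_intermediate_main0} — the six domain integrals together with the term $-\frac{(Q[u])^2}{\left< u,u \right>_0}$ — do not involve $v$ and are therefore transcribed verbatim, which produces exactly \eqref{ddm_unoptimized0}.

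Since no genuine analytic difficulty arises, I do not anticipate any serious obstacle. The only points demanding care are to cite \ref{as:4} so as to ensure the denominator $\left< u,u \right>_0$ is nonzero before discarding the projection, and to make explicit the linearity of $Q$ and the bilinearity of $\left< \cdot,\cdot \right>_0$ that legitimize the simplification of the last two terms.
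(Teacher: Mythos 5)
Your proposal is correct and coincides with the paper's own argument: the paper likewise obtains \eqref{ddm_unoptimized0} by simply setting $\left<u,v\right>_0=0$ in the formula of Theorem \ref{thm:main0} (its intermediate form \eqref{ddm_intermediate_main}), so the projection term collapses and the last two summands reduce to $2Q[v]+\left<v,v\right>_0$. Your explicit remarks on \ref{as:4} guaranteeing $\left<u,u\right>_0\neq 0$ and on the linearity of $Q$ and bilinearity of $\left<\cdot,\cdot\right>_0$ are exactly the justifications implicit in the paper's one-line deduction.
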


Note that \eqref{ddm_unoptimized0} is, in general, quadratic with respect to $v$. 
As $\ddot m(0)$ serves as an upper estimate on the second superdifferential of $\mu(\Omega_t)$ at $t=0$, it is feasible to obtain a closed-form optimization of \eqref{ddm_unoptimized0} over a class of correctors which is the one-dimensional linear space spanned by $v$.
In Lemma \ref{lem:vv>0} below we show that the orthogonality assumption \eqref{eq:uv=0} implies $\left< v, v \right>_0 \geq 0$. 
In the case of the strict inequality, we get the following result.
\begin{proposition}\label{thm:main}
	Let \ref{as:2}-\ref{as:3.5} be satisfied and let $u \in \wolp(\Omega)$ be a least energy critical point of $E_0$ satisfying \ref{as:4}. 
	If a corrector $v \in \wolp(\Omega)$ is such that $\left< u, v \right>_0 = 0$ and $\left< v, v \right>_0 = -Q[v] > 0$, then
	\begin{align}
	\notag
	\ddot m(0) 
	&=
	\int_{\Omega} L(u, D u ) \, {\rm div} (\widetilde{R}) \,dx
	-
	\int_{\Omega} (D_z L(u, D u ),D u \cdot D \widetilde{R}) \,dx
	\\
	\notag
	&-2 \int_{\Omega} (D_z L(u, D u ), D u \cdot DR) \, {\rm div}(R) \,dx 
	+ 2 \int_{\Omega} (D_z L(u, D u ),D u \cdot DR \cdot DR) \,dx
	\\
	\notag
	&
	+
	2 \int_{\Omega} L(u, D u ) \chi_2(DR) \,dx
	+ \int_{\Omega} (D^2_{zz} L(u, D u ),D u \cdot DR \otimes D u \cdot DR) \,dx 
	\\
	\label{ddm_intermediate_three}
	&- \frac {(Q[u])^2}{\left< u,u \right>_0} 
	-\frac { (Q[v])^2 } {\left< v,v \right>_0}.
	\end{align}
	Moreover, \eqref{ddm_intermediate_three} is optimal on the class of correctors $\{\gamma v: \gamma \in \mathbb{R}\}$ in the sense that if we define 
	\begin{equation}\label{eq:w_t}
	w_t(x) = \beta_t (u(x) + t \gamma v(x)),
	\quad 
	x \in \Omega,
	\end{equation}
	and its transposition to $\Omega_t$ defined by $W_t(y) := w_t(\Phi_t^{-1}(y))$, where $\beta_t > 0$ is chosen such that $W_t \in \mathcal{N}(\Omega_t)$, then for any $\gamma \in \mathbb{R}$,
	\begin{equation*}
	\ddot m(0) \leqslant \left . \frac {\partial^2} {\partial t^2} E[W_t] \right |_{t=0}.
	\end{equation*}
\end{proposition}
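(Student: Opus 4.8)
The plan is to deduce both assertions directly from Proposition~\ref{lem:second-derivative-uv=0}, exploiting the structural observation that in \eqref{ddm_unoptimized0} the dependence on the corrector is confined to the two terms $2Q[v]$ and $\left< v,v \right>_0$, whereas every remaining term involves only $u$, $R$, $\widetilde R$, and $L$. Denoting this corrector-independent part by $C$, I would read \eqref{ddm_unoptimized0} as $\ddot m(0) = C + 2Q[v] + \left< v,v \right>_0$. To obtain \eqref{ddm_intermediate_three} I would then substitute the hypothesis $\left< v,v \right>_0 = -Q[v]$ into this expression: a one-line computation gives $2Q[v] + \left< v,v \right>_0 = Q[v] = -(Q[v])^2 / \left< v,v \right>_0$, the last equality using $\left< v,v \right>_0 = -Q[v]$ once more (here $Q[v] = -\left< v,v \right>_0 \neq 0$). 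This reproduces precisely the corrector term of \eqref{ddm_intermediate_three}, so this first part is purely algebraic.

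For the optimality statement, I would apply Proposition~\ref{lem:second-derivative-uv=0} with the scaled corrector $\gamma v$ in place of $v$. This is admissible: the orthogonality hypothesis is preserved under scaling, since $\left< u,\gamma v \right>_0 = \gamma \left< u,v \right>_0 = 0$, and the normalization coefficient $\beta_t$ of \eqref{eq:w_t} is furnished by assumption~\ref{as:3.5}. Because $Q$ is linear in its argument by \eqref{eq:Qh} and $\left< \cdot,\cdot \right>_0$ is bilinear by \eqref{eq:<>0}, the corrector-dependent part now equals $2\gamma Q[v] + \gamma^2 \left< v,v \right>_0$ while $C$ stays fixed. Writing $m_\gamma(t) := E_t[W_t]$ with $W_t$ the transposition of $w_t$ from \eqref{eq:w_t}, this gives $\ddot m_\gamma(0) = C + 2\gamma Q[v] + \gamma^2 \left< v,v \right>_0$, which is exactly the second derivative $\left.\frac{\partial^2}{\partial t^2} E_t[W_t]\right|_{t=0}$ figuring in the statement.

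It then only remains to subtract \eqref{ddm_intermediate_three} from $\ddot m_\gamma(0)$ and complete the square. Setting $a := \left< v,v \right>_0 > 0$ and $b := Q[v]$, one finds
\begin{equation*}
\ddot m_\gamma(0) - \ddot m(0) = \gamma^2 a + 2\gamma b + \frac{b^2}{a} = a \left( \gamma + \frac{b}{a} \right)^2 \geq 0,
\end{equation*}
whence $\ddot m(0) \leq \ddot m_\gamma(0)$ for every $\gamma \in \mathbb{R}$, with equality exactly at $\gamma = -b/a$. Under the normalization $\left< v,v \right>_0 = -Q[v]$ this optimal value is $\gamma = 1$, so that \eqref{eq:w_t} collapses onto \eqref{eq:u_t}; equivalently, the imposed condition is nothing but the stationarity identity $2b + 2a\gamma = 0$ evaluated at $\gamma = 1$, which clarifies why this normalization is the natural one. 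I anticipate no genuine obstacle: once Proposition~\ref{lem:second-derivative-uv=0} is in hand the whole argument reduces to a one-variable quadratic minimization, and the only facts needing (immediate) verification are the scaling behaviour of $Q$ and $\left< \cdot,\cdot \right>_0$ together with the admissibility of $\gamma v$ as a corrector.
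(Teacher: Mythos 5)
Your proposal is correct and takes essentially the same route as the paper: both arguments deduce everything from Proposition~\ref{lem:second-derivative-uv=0} by observing that replacing $v$ with $\gamma v$ makes $\ddot m(0)$ the quadratic $C + 2\gamma Q[v] + \gamma^2 \left< v,v \right>_0$ in $\gamma$, whose minimum is attained at $\gamma = -Q[v]/\left< v,v \right>_0$, which the normalization $\left< v,v \right>_0 = -Q[v] > 0$ places at $\gamma = 1$. Your explicit completion of the square is only a cosmetic variant of the paper's direct minimization of this quadratic polynomial.
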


\begin{remark}\label{rem:1}
	Under the same regularity assumptions as in Remark \ref{rem:0}, the first two integrals in  \eqref{ddm_intermediate_three} containing $\widetilde{R}$ can be expressed via the Pohozaev identity as boundary integrals in \eqref{eq:m0poh}.
	Moreover, under similar assumptions, according to the structural theorem obtained in \cite{novruzipierre}, it is natural to expect that other integrals in \eqref{ddm_intermediate_three} can be also expressed as integrals over the boundary $\partial \Omega$. We do not provide additional details in the present paper and postpone the corresponding investigations for future research.
\end{remark}

In Section \ref{sec:second_derivative} below, we discuss some additional simplifications of the formula \eqref{ddm_intermediate_main0}.

\section{Auxiliary expressions and first-order derivative}\label{sec:first-order}
In this section, we prove the formula \eqref{eq:dotm01} for $\dot{m}(0)$ stated in Proposition \ref{prop:first-derivative}.
First, let us give expressions for $\dot{\varphi}_0$, $\ddot{\varphi}_0$, $\dot{\Psi}_0$, and $\ddot{\Psi}_0$, as they will be used in the sequel.
Recalling the notations \eqref{eq:m_and_Psi}, we have
$\varphi_t = {\rm det} (I +t DR + \frac 1 2 t^2 D\widetilde{R})$. 
Therefore, $\varphi_0 = 1$ and
\begin{align}
\label{eq:dotphi}
\dot \varphi_0  &= \sum_{i=1}^N (DR)_{ii} = {\rm div} (R);\\
\label{eq:ddotphi}
\ddot \varphi_0 &= \sum_{i,j=1}^N \left( (DR)_{ii} (DR)_{jj} - (DR)_{ij} (DR)_{ji} \right) + \sum_{i=1}^N (D \widetilde{R})_{ii} = 2 \chi_2(DR) + {\rm div} (\widetilde{R}), 
\end{align}
where $\chi_2(DR)$ stands for the third-to-highest coefficient of the characteristic polynomial of the matrix $DR$. That is, if $M$ is a square matrix, then
\begin{equation*}
\chi_2(M) = 
\sum_{i<j} (M_{ii} M_{jj} - M_{ij} M_{ji}).
\end{equation*}

To calculate the derivatives of $\Psi_t$, we use the rules for derivatives of the inverse matrix:
\begin{equation*}
\frac{d}{dt}(M^{-1}) 
=
 - M^{-1} \dot{M} M^{-1}, 
\qquad 
\frac{d^2}{dt^2} (M^{-1}) 
= 2 M^{-1} \dot{M} M^{-1} \dot{M} M^{-1} - M^{-1} \ddot{M} M^{-1}.
\end{equation*}
Thereby, we obtain
\begin{equation}
\label{eq:dotpsi}
\Psi_0 = I, \qquad \dot \Psi_0 = - DR, \qquad \ddot \Psi_0 = 2 DR \cdot DR - D\widetilde{R}.
\end{equation}

\medskip
Let us now deduce the formula  \eqref{eq:dotm01} for $\dot{m}(0)$. 
From the definition \eqref{eq:mt} of $m(t)$, we get
\begin{align}
\notag
\dot m(t) 
&= 
\int_{\Omega} L(u_t, D u_t \cdot \Psi_t) \dot \varphi_t \,dx 
+ 
\int_{\Omega} (D_z L(u_t, D u_t \cdot \Psi_t),D u_t \cdot \dot \Psi_t) \varphi_t \,dx \\
\label{eq:dotmt}
& + 
\int_{\Omega} (D_z L(u_t, D u_t \cdot \Psi_t), D \dot u_t \cdot \Psi_t) \varphi_t \,dx 
+ 
\int_{\Omega} L_u(u_t, D u_t \cdot \Psi_t) \dot u_t \varphi_t \,dx.
\end{align}
Since $\alpha_t$ in the definition \eqref{eq:u_t} of $u_t$ is differentiable (see the assumption \ref{as:3.5}), we have
\begin{equation}\label{dot_u_expansion}
\dot u_t = \dot \alpha_t (u + t v) + \alpha_t v = \frac {\dot \alpha_t}{\alpha_t} u_t + \alpha_t v \in \wolp(\Omega).
\end{equation}
Therefore, recalling that $U_t \in \mathcal{N}(\Omega_t)$, 	
we see that the last two terms in \eqref{eq:dotmt} are, in fact, 
\begin{equation*}
E_t'[U_t](\dot{u}_t(\Phi_t^{-1}(\cdot))) = \frac {\dot \alpha_t} {\alpha_t} E_t'[U_t] U_t + \alpha_t E'_t[U_t] (v(\Phi_t^{-1}(\cdot))) =
\alpha_t E'_t[U_t] (v(\Phi_t^{-1}(\cdot))).
\end{equation*}
Thus, $\dot{m}(t)$ can be rewritten as
\begin{align}
\notag
\dot m(t) 
&= 
\int_{\Omega} L(u_t, D u_t \cdot \Psi_t) \dot \varphi_t \,dx 
+
\int_{\Omega} (D_z L(u_t, D u_t \cdot \Psi_t),D u_t \cdot \dot \Psi_t) \varphi_t \,dx\\
\label{eq:dotmt1}
&+ 
\alpha_t \int_{\Omega} (D_z L(u_t, D u_t \cdot \Psi_t), D v \cdot \Psi_t) \varphi_t \,dx 
+ 
\alpha_t \int_{\Omega} L_u(u_t, D u_t \cdot \Psi_t) v \varphi_t \,dx.
\end{align}
Putting now $t=0$, we obtain
\begin{equation*}
\dot m(0) 
= 
\int_{\Omega} L(u, D u) \dot \varphi_0 \,dx + \int_{\Omega} (D_z L(u, D u ), D u \cdot \dot \Psi_0) \,dx
+
E_0'[u] v.
\end{equation*}
Recalling that $u$ is a critical point of $E_0$, we have $E_0'[u] v = 0$. 
Hence, using the expressions \eqref{eq:dotphi} and \eqref{eq:dotpsi}, we arrive at
\begin{equation*}
\dot m(0) 
=
\int_{\Omega} L(u, D u) \, {\rm div} (R) \,dx - \int_{\Omega} (D_z L(u, D u ),D u \cdot D R) \,dx.
\end{equation*}
Therefore, Proposition \ref{prop:first-derivative} is proved.

\section{Second-order derivative}\label{sec:second_derivative}
In this section, we study the second-order derivative $\ddot{m}(0)$. 
Recall that the expression for $\dot{m}(t)$ is given by \eqref{eq:dotmt1}.
Differentiating \eqref{eq:dotmt1}, we get
\begin{align*}
\ddot m(t) 
&= 
{\int_{\Omega} L(u_t, D u_t \cdot \Psi_t) \ddot \varphi_t \,dx} 
+ 
{2 \int_{\Omega} (D_z L(u_t, D u_t \cdot \Psi_t), D u_t \cdot \dot \Psi_t) \dot \varphi_t \,dx} 
\\
&+ 
\int_{\Omega} (D_z L(u_t, D u_t \cdot \Psi_t),D \dot u_t \cdot \Psi_t) \dot \varphi_t \,dx 
+
\int_{\Omega} L_u(u_t, D u_t \cdot \Psi_t) \dot u_t \dot \varphi_t \,dx 
\\
&+
{\int_{\Omega} (D_z L(u_t, D u_t \cdot \Psi_t),D u_t \cdot \ddot \Psi_t) \varphi_t \,dx}    
+ 
\int_{\Omega} (D_z L(u_t, D u_t \cdot \Psi_t), D \dot u_t \cdot \dot \Psi_t) \varphi_t \,dx 
\\
&+ 
{\int_{\Omega} (D^2_{zz} L(u_t, D u_t \cdot \Psi_t),D u_t \cdot \dot \Psi_t \otimes D u_t \cdot \dot \Psi_t) \varphi_t \,dx} 
\\
&+ 
\int_{\Omega} (D^2_{zz} L(u_t, D u_t \cdot \Psi_t),D \dot u_t \cdot \Psi_t \otimes D u_t \cdot \dot \Psi_t) \varphi_t \,dx 
\\
&+ 
\int_{\Omega} (D_z L_u(u_t, D u_t \cdot \Psi_t),D u_t \cdot \dot \Psi_t) \dot u_t \varphi_t \,dx 
+ 
{\dot \alpha_t E'_t[U_t] (v(\Phi_t^{-1}(\cdot)))} 
\\
&+ 
\alpha_t \int_{\Omega} (D_z L(u_t, D u_t \cdot \Psi_t),D v \cdot \Psi_t) \dot \varphi_t \,dx 
+ 
\alpha_t \int_{\Omega} (D_z L(u_t, D u_t \cdot \Psi_t),D v \cdot \dot \Psi_t) \varphi_t \,dx 
\\
&+ 
\alpha_t \int_{\Omega} (D^2_{zz} L(u_t, D u_t \cdot \Psi_t), D v \cdot \Psi_t \otimes D u_t \cdot \dot \Psi_t) \varphi_t \,dx 
\\
&+ 
\alpha_t \int_{\Omega} (D^2_{zz} L(u_t, D u_t \cdot \Psi_t), D v \cdot \Psi_t \otimes D \dot u_t \cdot \Psi_t) \varphi_t \,dx
\\
&+ 
\alpha_t \int_{\Omega} (D_z L_u(u_t, D u_t \cdot \Psi_t),D v \cdot \Psi_t) \dot u_t \varphi_t \,dx 
\\
&+ 
\alpha_t \int_{\Omega} L_u(u_t, D u_t \cdot \Psi_t) v \dot \varphi_t \,dx 
+ 
\alpha_t \int_{\Omega} (D_z L_u(u_t, D u_t \cdot \Psi_t),D u_t \cdot \dot \Psi_t) v \varphi_t \,dx
\\
&+ 
\alpha_t \int_{\Omega} (D_z L_u(u_t, D u_t \cdot \Psi_t), D \dot u_t \cdot \Psi_t) v \varphi_t \,dx
+ 
\alpha_t \int_{\Omega} L_{uu}(u_t, D u_t \cdot \Psi_t) v \dot u_t \varphi_t \,dx.
\end{align*}
First, each term in the sum
\begin{align}\label{eq:dashed1}
&\int_{\Omega} L(u_t, D u_t \cdot \Psi_t) \ddot \varphi_t \,dx
+ 
2 \int_{\Omega} (D_z L(u_t, D u_t \cdot \Psi_t), D u_t \cdot \dot \Psi_t) \dot \varphi_t \,dx
\\
\notag
&+
\int_{\Omega} (D_z L(u_t, D u_t \cdot \Psi_t),D u_t \cdot \ddot \Psi_t) \varphi_t \,dx
+
\int_{\Omega} (D^2_{zz} L(u_t, D u_t \cdot \Psi_t),D u_t \cdot \dot \Psi_t \otimes D u_t \cdot \dot \Psi_t) \varphi_t \,dx
\end{align}
is of deformation-deformation type, i.e., the differentiation with respect to $t$ appears two times in factors dealing with the deformation. 
Second, the term ${\dot \alpha_t E'_t[U_t] (v(\Phi_t^{-1}(\cdot)))}$ will vanish at $t=0$. 
Third, each term in the sum
\begin{align}
\notag
&\alpha_t \int_{\Omega} (D^2_{zz} L(u_t, D u_t \cdot \Psi_t), D v \cdot \Psi_t \otimes D \dot u_t \cdot \Psi_t) \varphi_t \,dx 
+ 
\alpha_t \int_{\Omega} (D_z L_u(u_t, D u_t \cdot \Psi_t),D v \cdot \Psi_t) \dot u_t \varphi_t \,dx
\\
\label{eq:wavy1}
&+
\alpha_t \int_{\Omega} (D_z L_u(u_t, D u_t \cdot \Psi_t), D \dot u_t \cdot \Psi_t) v \varphi_t \,dx
+ 
\alpha_t \int_{\Omega} L_{uu}(u_t, D u_t \cdot \Psi_t) v \dot u_t \varphi_t \,dx
\end{align}
is of corrector-corrector type, i.e., it contains both $\dot u_t$ and $v$. 
The sum \eqref{eq:wavy1} is transformed to the second-order variation of $E_t$. 
That is, let us introduce the symmetric bilinear form (cf.\ \eqref{eq:<>0})
\begin{align*}
\left< g, h \right>_t 
&= 
E_t''[U_t](g(\Phi_t^{-1}(\cdot)),h(\Phi_t^{-1}(\cdot))) 
\\
&= 
\int_{\Omega} (D^2_{zz}L(u_t, D u_t \cdot \Psi_t), D g \cdot \Psi_t \otimes D h \cdot \Psi_t) \varphi_t \,dx 
\\
&+ 
\int_{\Omega} (D_z L_u(u_t, D u_t \cdot \Psi_t), D g \cdot \Psi_t) h \varphi_t \,dx 
\\
&+ 
\int_{\Omega} (D_z L_u(u_t, D u_t \cdot \Psi_t),D h \cdot \Psi_t) g \varphi_t \,dx 
+ 
\int_{\Omega} L_{uu}(u_t, D u_t \cdot \Psi_t) g h \varphi_t \,dx,
\quad
g,h \in \wolp(\Omega).
\end{align*}
Then, using \eqref{dot_u_expansion}, the sum \eqref{eq:wavy1} can be compressed as
\begin{equation}\label{eq:wavy2}
\alpha_t \left< v, \dot u_t \right>_t = \alpha_t \left< v,v \right>_t + \dot \alpha_t \left< v, u_t \right>_t.
\end{equation}

To catch the structure of $\ddot{m}(t)$, let us regroup the expression for $\ddot{m}(t)$ in the following way:

\hspace*{-1.2cm}
\vbox{\begin{align*}
\ddot m(t) 
= 
{\int_{\Omega} L(u_t, D u_t \cdot \Psi_t) \ddot \varphi_t \,dx} 
&+ 
{2 \int_{\Omega} (D_z L(u_t, D u_t \cdot \Psi_t), D u_t \cdot \dot \Psi_t) \dot \varphi_t \,dx} 
\\
+ 
{\int_{\Omega} (D_z L(u_t, D u_t \cdot \Psi_t),D u_t \cdot \ddot \Psi_t) \varphi_t \,dx} 
&+ 
{\int_{\Omega} (D^2_{zz} L(u_t, D u_t \cdot \Psi_t),D u_t \cdot \dot \Psi_t \otimes D u_t \cdot \dot \Psi_t) \varphi_t \,dx} 
\\
+ 
\int_{\Omega} (D^2_{zz} L(u_t, D u_t \cdot \Psi_t),D \dot u_t \cdot \Psi_t \otimes D u_t \cdot \dot \Psi_t) \varphi_t \,dx 
&+ 
\alpha_t \int_{\Omega} (D^2_{zz} L(u_t, D u_t \cdot \Psi_t), D v \cdot \Psi_t \otimes D u_t \cdot \dot \Psi_t) \varphi_t \,dx 
\\
+ 
\int_{\Omega} (D_z L_u(u_t, D u_t \cdot \Psi_t),D u_t \cdot \dot \Psi_t) \dot u_t \varphi_t \,dx &+ 
\alpha_t \int_{\Omega} (D_z L_u(u_t, D u_t \cdot \Psi_t),D u_t \cdot \dot \Psi_t) v \varphi_t \,dx 
\\
+ 
\int_{\Omega} (D_z L(u_t, D u_t \cdot \Psi_t), D \dot u_t \cdot \dot \Psi_t) \varphi_t \,dx 
&+ 
\alpha_t \int_{\Omega} (D_z L(u_t, D u_t \cdot \Psi_t),D v \cdot \dot \Psi_t) \varphi_t \,dx 
\\
+ 
\int_{\Omega} (D_z L(u_t, D u_t \cdot \Psi_t),D \dot u_t \cdot \Psi_t) \dot \varphi_t \,dx 
&+ 
\alpha_t \int_{\Omega} (D_z L(u_t, D u_t \cdot \Psi_t),D v \cdot \Psi_t) \dot \varphi_t \,dx 
\\
+ 
\int_{\Omega} L_u(u_t, D u_t \cdot \Psi_t) \dot u_t \dot \varphi_t \,dx 
&+ 
\alpha_t \int_{\Omega} L_u(u_t, D u_t \cdot \Psi_t) v \dot \varphi_t \,dx 
\\
&+ 
{{\dot \alpha_t E'_t[U_t] (v(\Phi_t^{-1}(\cdot)))}}
+ 
\alpha_t \left< v,v \right>_t + \dot \alpha_t \left< v, u_t \right>_t.
\end{align*}}

\noindent
Putting $t=0$ and noting that $\dot{u} = \dot{\alpha}_0 u + v$ and $E_0'[u]v=0$, we obtain
\begin{align*}
\ddot m(0) 
= 
{\int_{\Omega} L(u, D u) \ddot \varphi_0 \,dx} 
&+ 
{2 \int_{\Omega} (D_z L(u, D u), D u \cdot \dot \Psi_0) \dot \varphi_0 \,dx} 
\\+ 
{\int_{\Omega} (D_z L(u, D u),D u \cdot \ddot \Psi_0) \,dx} 
&+ 
{\int_{\Omega} (D^2_{zz} L(u, D u),D u \cdot \dot \Psi_0 \otimes D u \cdot \dot \Psi_0) \,dx} 
\\
+ 
{\dot\alpha_0} \int_{\Omega} (D^2_{zz} L(u, D u),D u  \otimes D u \cdot \dot \Psi_0) \,dx 
&+ 
2  \int_{\Omega} (D^2_{zz} L(u, D u), D v  \otimes D u \cdot \dot \Psi_0) \,dx 
\\
+  
{\dot\alpha_0} \int_{\Omega} (D_z L_u(u, D u),D u \cdot \dot \Psi_0) u \,dx 
&+ 
2  \int_{\Omega} (D_z L_u(u, D u),D u \cdot \dot \Psi_0) v \,dx
\\
+  
{\dot\alpha_0} \int_{\Omega} (D_z L(u, D u), D u \cdot \dot \Psi_0) \,dx 
&+ 
2  \int_{\Omega} (D_z L(u, D u),D v \cdot \dot \Psi_0) \,dx 
\\
+  
{\dot\alpha_0} \int_{\Omega} (D_z L(u, D u),D u) \dot \varphi_0 \,dx 
&+ 
2  \int_{\Omega} (D_z L(u, D u),D v ) \dot \varphi_0 \,dx 
\\
+  
{\dot\alpha_0} \int_{\Omega} L_u(u, D u) u \dot \varphi_0 \,dx 
&+ 
2  \int_{\Omega} L_u(u, D u) v \dot \varphi_0 \,dx
\\
&+  
\left< v,v \right>_0 + \dot\alpha_0 \left< v, u \right>_0.
\end{align*}
Let us define a linear functional $Q: \wolp(\Omega) \mapsto \mathbb{R}$ as
\begin{align*}
Q[h] 
&= 
\int_{\Omega} (D^2_{zz} L(u, D u), D h  \otimes D u \cdot \dot \Psi_0) \,dx 
+ 
\int_{\Omega} (D_z L_u(u, D u),D u \cdot \dot \Psi_0) h \,dx 
\\
&+ 
\int_{\Omega} (D_z L(u, D u),D h \cdot \dot \Psi_0) \,dx 
+ 
\int_{\Omega} (D_z L(u, D u),D h ) \dot \varphi_0 \,dx 
+ 
\int_{\Omega} L_u(u, D u) h \dot \varphi_0 \,dx
\end{align*}
(see also the equivalent definition \eqref{eq:Qh} of $Q$ written via \eqref{eq:dotphi} and \eqref{eq:dotpsi}).
That is, $Q$ collects all the terms in $\ddot{m}(0)$ except \eqref{eq:dashed1} and \eqref{eq:wavy2} calculated at $t=0$. Such terms come out of differentiating $\dot{m}(t)$ when the derivative falls ones on the deformation coefficient $\Psi_t$ or $\varphi_t$.
Then, $\ddot{m}(0)$ can be compactly written as follows:
\begin{align}
\notag
\ddot m(0) 
&= 
{\int_{\Omega} L(u, D u) \ddot \varphi_0 \,dx} 
+ 
{2 \int_{\Omega} (D_z L(u, D u), D u \cdot \dot \Psi_0) \dot \varphi_0 \,dx} 
\\
\notag
&+ 
{\int_{\Omega} (D_z L(u, D u),D u \cdot \ddot \Psi_0) \,dx} 
+ 
{\int_{\Omega} (D^2_{zz} L(u, D u),D u \cdot \dot \Psi_0 \otimes D u \cdot \dot \Psi_0) \,dx}  
\\
\label{ddm_intermediate}
&+  
{\dot\alpha_0} Q[u]
+ 
2 Q[v]
+  
\left< v,v \right>_0 
+ 
\dot\alpha_0 \left< v, u \right>_0.
\end{align}

\medskip
Let us now find the expression for $\dot \alpha_0$. To this end, we differentiate the constraint $E_t'[U_t]U_t = 0$ given by \eqref{Nehari_constraint}: 
\begin{align*}
0 
&= 
\int_{\Omega} (D_z L(u_t, D u_t \cdot \Psi_t), D u_t \cdot \Psi_t) \dot \varphi_t \,dx
+
\int_{\Omega} (D_z L(u_t, D u_t \cdot \Psi_t), D u_t \cdot \dot \Psi_t) \varphi_t \,dx
\\
&+
\int_{\Omega} (D_z L(u_t, D u_t \cdot \Psi_t), D \dot u_t \cdot \Psi_t) \varphi_t \,dx
+ \int_{\Omega} (D^2_{zz} L(u_t, D u_t \cdot \Psi_t), D u_t \cdot \dot \Psi_t \otimes D u_t \cdot \Psi_t) \varphi_t \,dx
\\
&
+
\int_{\Omega} (D^2_{zz} L(u_t, D u_t \cdot \Psi_t), D \dot u_t \cdot \Psi_t \otimes D u_t \cdot \Psi_t) \varphi_t \,dx
+ \int_{\Omega} (D_z L_u(u_t, D u_t \cdot \Psi_t), D u_t \cdot \Psi_t) \dot u_t \varphi_t \,dx
 \\
 &+ 
 \int_{\Omega} L_u(u_t, D u_t \cdot \Psi_t) u_t \dot \varphi_t \,dx
 +
 \int_{\Omega} L_u(u_t, D u_t \cdot \Psi_t) \dot u_t \varphi_t \,dx
 \\
 &+ 
 \int_{\Omega} (D_z L_u(u_t, D u_t \cdot \Psi_t), D u_t \cdot \dot \Psi_t) u_t \varphi_t \,dx
 +
 \int_{\Omega} (D_z L_u(u_t, D u_t \cdot \Psi_t), D \dot u_t \cdot \Psi_t) u_t \varphi_t \,dx
 \\
 &+ \int_{\Omega} L_{uu}(u_t, D u_t \cdot \Psi_t) \dot u_t u_t \varphi_t \,dx.
\end{align*}
Putting $t=0$, we obtain
\begin{align}
\notag
0 
&= 
{\int_{\Omega} (D_z L(u, D u), D u) \dot \varphi_0 \,dx}
+
{\int_{\Omega} (D_z L(u, D u), D u \cdot \dot \Psi_0) \,dx}
\\
\notag
&+
\int_{\Omega} (D_z L(u, D u), D \dot u) \,dx
+ 
{\int_{\Omega} (D^2_{zz} L(u, D u), D u \cdot \dot \Psi_0 \otimes D u) \,dx}
\\
\notag
&
+
{\int_{\Omega} (D^2_{zz} L(u, D u), D \dot u \otimes D u) \,dx}
+ 
{\int_{\Omega} (D_z L_u(u, D u), D u) \dot u \,dx}
\\
\notag
&+ 
{\int_{\Omega} L_u(u, D u) u \dot \varphi_0 \,dx}
+
\int_{\Omega} L_u(u, D u) \dot u \,dx
\\
\notag
&+ 
{\int_{\Omega} (D_z L_u(u, D u), D u \cdot \dot \Psi_0) u \,dx}
+
{\int_{\Omega} (D_z L_u(u, D u), D \dot u) u \,dx}
\\
\label{eq:neh_0}
&+ 
{\int_{\Omega} L_{uu}(u, D u) \dot u u \,dx}.
\end{align}
Notice that
\begin{align*}
\left<\dot{u}, u\right>_0
\equiv
E_0''[u](\dot u, u)
&=
{\int_{\Omega} (D^2_{zz} L(u, D u), D \dot u \otimes D u) \,dx}
+ 
{\int_{\Omega} (D_z L_u(u, D u), D u) \dot u \,dx}
\\
&+
{\int_{\Omega} (D_z L_u(u, D u), D \dot u) u \,dx}
+ 
{\int_{\Omega} L_{uu}(u, D u) \dot u u \,dx},
\end{align*}
and
$$
E_0'[u] \dot u = \int_{\Omega} (D_z L(u, D u), D \dot u) \,dx + \int_{\Omega} L_u(u, D u) \dot u \,dx = 0,
$$
and the remaining terms in \eqref{eq:neh_0} are $Q[u]$. 
Therefore, using \eqref{dot_u_expansion}, we compress \eqref{eq:neh_0} as follows:
\begin{equation}\label{eq:alphadot}
0 
= 
\left<\dot u, u\right>_0 + Q[u] 
= 
\dot\alpha_0  \left< u,u \right>_0 + \left< u,v \right>_0 + Q[u],
~~\text{ thus }~~
\dot \alpha_0 =  - \frac {\left< u,v \right>_0}{\left< u,u \right>_0} - \frac {Q[u]}{\left< u,u \right>_0},
\end{equation}
where $\left< u,u \right>_0 \neq 0$ by the assumption \ref{as:4}.

Employing now \eqref{eq:alphadot}, we rewrite the last four terms in \eqref{ddm_intermediate} in the following way:
\begin{align*}
{\dot\alpha_0} Q[u]
+ 
2 Q[v] 
&+ 
\left< v,v \right>_0 
+  
\dot\alpha_0 \left< v, u \right>_0
\\
&= 
\left< v,v \right>_0 
- 
\frac {\left< u,v \right>_0^2}{\left< u,u \right>_0} 
- 
\frac {\left< u,v \right>_0}{\left< u,u \right>_0} Q[u] 
- 
\frac {\left< u,v \right>_0}{\left< u,u \right>_0} Q[u]
- 
\frac {(Q[u])^2}{\left< u,u \right>_0} + 2 Q[v]
\\
&= 
- 
\frac {(Q[u])^2}{\left< u,u \right>_0}
+ 
2 Q\left[ v - \frac {\left< u,v \right>_0}{\left< u,u \right>_0} u  \right] 
+
\left< v - \frac{\left< u, v \right>_0}{\left< u,u \right>_0} u, v - \frac{\left< u,v \right>_0}{\left< u,u \right>_0} u \right>_0.
\end{align*}
Hence, $\ddot m(0)$ can be written as
\begin{align}
\notag
\ddot m(0) 
&= 
{\int_{\Omega} L(u, D u) \ddot \varphi_0 \,dx} 
+ 
{2 \int_{\Omega} (D_z L(u, D u), D u \cdot \dot \Psi_0) \dot \varphi_0 \,dx} 
\\
\notag
&+ 
{\int_{\Omega} (D_z L(u, D u),D u \cdot \ddot \Psi_0) \,dx} 
+ 
{\int_{\Omega} (D^2_{zz} L(u, D u),D u \cdot \dot \Psi_0 \otimes D u \cdot \dot \Psi_0) \,dx}  
\\
\label{ddm_intermediate_main}
&
- 
\frac {(Q[u])^2}{\left< u,u \right>_0} 
+ 
2 Q\left[ v - \frac {\left< u,v \right>_0}{\left< u,u \right>_0} u  \right] 
+
\left< v - \frac{\left< u, v \right>_0}{\left< u,u \right>_0} u, v - \frac{\left< u,v \right>_0}{\left< u,u \right>_0} u \right>_0.
\end{align}
Substituting the expressions \eqref{eq:dotphi}, \eqref{eq:ddotphi}, and \eqref{eq:dotpsi} into \eqref{ddm_intermediate_main}, we obtain \eqref{ddm_intermediate_main0}, and hence Theorem \ref{thm:main0} is established.

\medskip
Let us discuss simplifications of \eqref{ddm_intermediate_main} given by Propositions \ref{lem:second-derivative-uv=0} and \ref{thm:main}. 
Clearly, if $\left< u, v\right>_0 = 0$, then \eqref{ddm_intermediate_main} reads as
\begin{align}
\notag
\ddot m(0) 
&= 
{\int_{\Omega} L(u, D u) \ddot \varphi_0 \,dx} 
+ 
{2 \int_{\Omega} (D_z L(u, D u), D u \cdot \dot \Psi_0) \dot \varphi_0 \,dx} 
\\
\notag
&+ 
{\int_{\Omega} (D_z L(u, D u),D u \cdot \ddot \Psi_0) \,dx} 
+ 
{\int_{\Omega} (D^2_{zz} L(u, D u),D u \cdot \dot \Psi_0 \otimes D u \cdot \dot \Psi_0) \,dx}  
\\
\label{ddm_unoptimized}
&- 
\frac {(Q[u])^2}{\left< u,u \right>_0} 
+ 
2 Q[v]
+  
\left< v,v \right>_0,
\end{align}
which is the result of Proposition \ref{lem:second-derivative-uv=0}. 
Moreover, we have the following information on the sign of $\left< v, v \right>_0$.
\begin{lemma}\label{lem:vv>0}
	Let \ref{as:2}-\ref{as:3.5} be satisfied and let $u \in \wolp(\Omega)$ be a nonzero least energy critical point of $E_0$. 
	If a corrector $v \in \wolp(\Omega)$ is such that $\left< u, v \right>_0 = 0$, then $\left< v, v \right>_0 \geq 0$.
\end{lemma}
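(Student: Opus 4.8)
The plan is to exploit the defining property of $u$ as a minimizer of $E_0$ over the Nehari manifold $\mathcal{N}(\Omega)$ (assumption \ref{as:3}) by probing it along a one-parameter curve issuing from $u$ in the direction of the corrector $v$. By assumption \ref{as:3.5}, there exists $\beta_{(\cdot)} \in C^1((-\delta,\delta),(0,+\infty))$ with $\beta_0 = 1$ such that $g(s) := \beta_s(u + sv) \in \mathcal{N}(\Omega)$ for all small $|s|$. Since $g(s)$ lies on $\mathcal{N}(\Omega)$ and $u = g(0)$ minimizes $E_0$ over $\mathcal{N}(\Omega)$, the scalar function $f(s) := E_0[g(s)]$ satisfies $f(s) \geq f(0) = \mu(\Omega)$ for all small $|s|$, so that $s = 0$ is a local minimum of $f$.

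Next I would extract the second-order behaviour of $f$ at $s = 0$. The map $g$ is differentiable at $0$ with $g'(0) = \dot\beta_0 u + v$, where $\dot\beta_0 := \frac{d}{ds}\beta_s|_{s=0}$, so $g(s) = u + s\,g'(0) + o(s)$ in $\wolp(\Omega)$. Because $E_0 \in C^2$ (assumption \ref{as:2}) and $E_0'[u] = 0$, the second-order Taylor expansion of $E_0$ at $u$ gives
$$
f(s) = E_0[u] + \tfrac12 \left< g'(0), g'(0)\right>_0 s^2 + o(s^2).
$$
Combining with the minimality $f(s) \geq f(0)$, then dividing by $s^2$ and letting $s \to 0$, yields $\left< g'(0), g'(0)\right>_0 \geq 0$. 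I would emphasize that this step uses only $\beta \in C^1$: I expand $E_0$ at $u$ rather than differentiate the composition $s \mapsto E_0[\beta_s(u+sv)]$ twice by the chain rule, which is exactly the point where one would otherwise be forced to invoke the (unavailable) $C^2$-regularity of $\beta$. This regularity gap is the main technical obstacle, and the Taylor argument circumvents it cleanly.

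Finally I would eliminate the $u$-component of $g'(0)$ using the constraint. Differentiating the Nehari identity $G(w) := E_0'[w]w = 0$ along $g$ at $s = 0$, and using $E_0'[u] = 0$, gives $G'(u)\,g'(0) = \left< u, g'(0)\right>_0 = 0$, that is, $\dot\beta_0 \left< u, u\right>_0 + \left< u, v\right>_0 = 0$. The hypothesis $\left< u, v\right>_0 = 0$ then forces $\dot\beta_0 \left< u, u\right>_0 = 0$, and hence $\dot\beta_0^2 \left< u, u\right>_0 = 0$. Expanding the quadratic form,
$$
\left< g'(0), g'(0)\right>_0 = \dot\beta_0^2 \left< u, u\right>_0 + 2\dot\beta_0 \left< u, v\right>_0 + \left< v, v\right>_0 = \left< v, v\right>_0,
$$
so the inequality $\left< g'(0), g'(0)\right>_0 \geq 0$ becomes exactly $\left< v, v\right>_0 \geq 0$, as claimed. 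Note that this argument does not require assumption \ref{as:4}: the possibly degenerate case $\left< u, u\right>_0 = 0$ is handled automatically, since then the cross terms above vanish regardless of the value of $\dot\beta_0$.
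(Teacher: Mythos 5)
Your proof is correct, and it rests on the same basic mechanism as the paper's: probe the minimality of $u$ over $\mathcal{N}(\Omega)$ along the $C^1$ Nehari curve $g(s)=\beta_s(u+sv)$ supplied by assumption \ref{as:3.5}, and extract a second-order necessary condition at $s=0$. The execution differs in two respects, both worth noting. First, where you close the regularity gap (no $C^2$ information on $\beta$) by Taylor-expanding $E_0$ at $u$, so that only $g'(0)=\dot\beta_0 u+v$ enters, the paper instead differentiates $s\mapsto E_0[g(s)]$ twice, but makes the second differentiation legitimate by first using the constraint $E_0'[g(s)]g(s)=0$ to cancel the $\dot\beta_s$-term, reducing the first derivative to $E_0'[g(s)](\beta_s v)$, which is $C^1$ in $s$ for $\beta\in C^1$ and $E_0\in C^2$; both devices circumvent exactly the obstacle you identified. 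Second, your route yields the full quadratic form $\langle g'(0),g'(0)\rangle_0\geq 0$ and therefore needs the extra step of differentiating the Nehari identity to get $\langle u,g'(0)\rangle_0=0$ and kill the $\dot\beta_0^2\langle u,u\rangle_0$ term; in the paper that term never appears, because after the constraint-based reduction the second derivative comes out directly as $E_0''[u](v,\dot\beta_0 u+v)=\dot\beta_0\langle u,v\rangle_0+\langle v,v\rangle_0$, and the cross term dies by the hypothesis $\langle u,v\rangle_0=0$ alone. (The two computations are consistent, since $\langle u,g'(0)\rangle_0=0$ gives $\langle g'(0),g'(0)\rangle_0=\langle v,g'(0)\rangle_0$.) Your version is slightly longer but makes fully explicit that only $\beta\in C^1$ is used and that the degenerate case $\langle u,u\rangle_0=0$ is harmless — correctly observing that assumption \ref{as:4} is not needed, in agreement with the lemma's hypotheses — while the paper's cancellation is shorter and never touches $\langle u,u\rangle_0$ at all.
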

\begin{proof}
 	Consider the function $\beta_s (u + sv) \in \wolp(\Omega)$, where $|s|<\epsilon$ and $\epsilon>0$ is sufficiently small, and the normalization coefficient $\beta_{(\cdot)} \in C^1((-\epsilon,\epsilon), (0,+\infty))$ is such that $\beta_0=1$ and $\beta_s (u + sv) \in \mathcal{N}(\Omega)$. In view of the assumption \ref{as:3.5}, such $\beta_s$ exists. In particular, we have
 	$$
 	E_0'[\beta_s (u + sv)](u + sv) = 0
 	\quad   	
 	\text{for any }
 	|s| < \epsilon.
 	$$
 	Since $u$ is a global minimizer of $E_0$ over the Nehari manifold $\mathcal{N}(\Omega)$ (see the assumption \ref{as:3}), we have $E_0[u] \leq E_0[\beta_s (u + sv)]$, 
 	$$
 	\left.\frac{\partial}{\partial s} E_0[\beta_s (u + sv)]\right|_{s=0} 
 	= 
 	E_0'[u] (\dot \beta_0 u + v) = 0
 	\quad
 	\text{and}
 	\quad 
 	\left.\frac{\partial^2}{\partial s^2} E_0[\beta_s (u + sv)]\right|_{s=0}  \geq 0.
 	$$
 	Therefore,
 	\begin{align*}
 	0 
 	&\leq \left.\frac{\partial^2}{\partial s^2} E_0[\beta_s (u + sv)]\right|_{s=0} 
 	= 
 	\left.\frac{\partial}{\partial s} \left(
 	E_0'[\beta_s (u + sv)](\dot \beta_s (u + sv) + \beta_s v)
 	\right) \right|_{s=0}\\
 	&=
 	\left.\frac{\partial}{\partial s} \left(
 	E_0'[\beta_s (u + sv)](\beta_s v)
 	\right) \right|_{s=0} =
 	\dot \beta_0 E_0'[u] v 
 	+ 
 	\left. E_0''[\beta_s (u + sv)](v, \dot \beta_s (u + sv) + \beta_s v)\right|_{s=0}
 	\\
 	&=
 	E_0''[u](v, \dot \beta_0 u + v)
 	=
	\dot \beta_0 E_0''[u](v, u) + E_0''[u](v, v) = \dot \beta_0 \left< u, v \right>_0 + \left< v, v \right>_0 
	= \left< v, v \right>_0.
	\pushQED{\qed}
	\qedhere
 	\end{align*}
\end{proof}

\medskip
We see from Lemma \ref{lem:vv>0} that if $\left<u,v\right>_0=0$, then there are two possibilities: either $\left< v, v \right>_0 = 0$ or $\left< v, v \right>_0 > 0$.
Suppose first that $\left< v, v \right>_0 = 0$, i.e., a degeneracy occurs. Then we trivially obtain from \eqref{ddm_unoptimized} that
\begin{align}
\notag
\ddot m(0) 
&= 
\int_{\Omega} L(u, D u ) \ddot \varphi_0 \,dx 
+ 
2 \int_{\Omega} (D_z L(u, D u ), D u \cdot \dot \Psi_0) \dot \varphi_0 \,dx 
\\
\notag
&+ 
\int_{\Omega} (D_z L(u, D u ),D u \cdot \ddot \Psi_0) \,dx 
+ 
\int_{\Omega} (D^2_{zz} L(u, D u ),D u \cdot \dot \Psi_0 \otimes D u \cdot \dot \Psi_0) \,dx 
\\
\label{ddm_unoptimized_2}
&- 
\frac {(Q[u])^2}{\left< u,u \right>_0} 
+ 
2 Q[v].
\end{align}
It is not hard to see that $\left< u, \gamma v \right>_0 = 0$ and $\left< \gamma v, \gamma v \right>_0 = 0$ for any $\gamma \in \mathbb{R}$, and hence \eqref{ddm_unoptimized_2} remains valid after replacing $v$ by $\gamma v$. 
Thus, the map $\gamma \mapsto \ddot m(0)$ is a polynomial of degree at most one.
If $Q[v] \neq 0$, then we can find $\gamma$ with sufficiently large absolute value in order to achieve $\ddot m(0) < 0$. 
That is, we have shown the following result.
\begin{lemma}\label{lem:second-derivative-vv=0}
 	Let \ref{as:2}-\ref{as:3.5} be satisfied and let $u \in \wolp(\Omega)$ be a least energy critical point of $E_0$ satisfying \ref{as:4}. 
 	If a corrector $v \in \wolp(\Omega)$ is such that $\left< u, v \right>_0 = 0$ and $\left< v, v \right>_0 = 0$, then \eqref{ddm_unoptimized_2} holds true. 
 	Moreover, if $Q[v] \neq 0$, then there exists $\gamma \in \mathbb{R}$ such that $\ddot m(0) < 0$ after replacing $v$ by $\gamma v$.
\end{lemma}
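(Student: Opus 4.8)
The plan is to deduce this lemma directly from Proposition~\ref{lem:second-derivative-uv=0}, which already supplies the reduced formula \eqref{ddm_unoptimized} for $\ddot m(0)$ under the single orthogonality hypothesis $\left< u, v \right>_0 = 0$. The first assertion is then immediate: the last three terms of \eqref{ddm_unoptimized} are $-\frac{(Q[u])^2}{\left< u,u \right>_0} + 2 Q[v] + \left< v,v \right>_0$, so imposing the additional degeneracy hypothesis $\left< v, v \right>_0 = 0$ simply annihilates the quadratic term and leaves exactly \eqref{ddm_unoptimized_2}. No fresh computation is needed here; it is a pure substitution into an identity established earlier.

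For the second assertion, the idea is to track how $\ddot m(0)$ behaves when the corrector is rescaled to $\gamma v$. First I would check that the two hypotheses of the first part are preserved under rescaling: since $\left< \cdot, \cdot \right>_0$ is bilinear, $\left< u, \gamma v \right>_0 = \gamma \left< u, v \right>_0 = 0$ and $\left< \gamma v, \gamma v \right>_0 = \gamma^2 \left< v, v \right>_0 = 0$ for every $\gamma \in \mathbb{R}$, so the reduced formula \eqref{ddm_unoptimized_2} remains applicable with $\gamma v$ in place of $v$. The crucial point is that every term of \eqref{ddm_unoptimized_2} except $2 Q[v]$ is independent of the corrector: the four deformation integrals depend only on $u$, $R$, and $\widetilde R$ through $\dot\varphi_0,\ddot\varphi_0,\dot\Psi_0,\ddot\Psi_0$, and $-\frac{(Q[u])^2}{\left< u,u \right>_0}$ depends only on $u$. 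Collecting these into a constant $C$ and invoking the linearity of $Q$ to write $Q[\gamma v] = \gamma Q[v]$, I obtain $\ddot m(0) = C + 2 \gamma Q[v]$, which is an affine function of $\gamma$.

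The conclusion then follows at once: if $Q[v] \neq 0$, this affine map is nonconstant and hence unbounded below, so choosing $\gamma$ of the sign opposite to $Q[v]$ and of sufficiently large absolute value forces $\ddot m(0) < 0$. I do not anticipate any genuine obstacle in this argument. The only conceptual observation worth isolating is that the degeneracy $\left< v, v \right>_0 = 0$ is precisely what removes the $\gamma^2$ contribution, leaving the linear term $2 \gamma Q[v]$ free to dominate; this is exactly the feature that distinguishes the present degenerate case from the nondegenerate situation $\left< v, v \right>_0 > 0$ of Proposition~\ref{thm:main}, where one instead minimizes a true quadratic in $\gamma$ and obtains a finite optimal value rather than an arbitrarily negative one.
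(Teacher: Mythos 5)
Your proposal is correct and follows essentially the same route as the paper: you substitute $\left< v,v \right>_0 = 0$ into \eqref{ddm_unoptimized} to get \eqref{ddm_unoptimized_2}, verify via bilinearity that the hypotheses persist under $v \mapsto \gamma v$, and use the linearity of $Q$ to see that $\gamma \mapsto \ddot m(0)$ is affine with slope $2Q[v]$, so a suitably signed $\gamma$ of large modulus makes $\ddot m(0) < 0$. This is precisely the paper's argument, stated with slightly more explicit bookkeeping of the corrector-independent terms.
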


Let us suppose now that $\left< v, v \right>_0 > 0$. As above, $\left< u, \gamma v \right>_0 = 0$ for any $\gamma \in \mathbb{R}$, that is, \eqref{ddm_unoptimized} is valid with the corrector $\gamma v$ instead of $v$. 
Therefore, we see from \eqref{ddm_unoptimized} that the map $\gamma \mapsto \ddot m(0)$ is a quadratic polynomial whose major coefficient $\left< v,v \right>_0$ is positive. 
Evidently, for any $a, b \in \mathbb{R}$, $c>0$, there hold
$$
\min_{\gamma \in \mathbb{R}}\{a + 2b + c \gamma^2\} = -\frac{b^2}{c}+a
\quad \text{and} \quad 
\arg\min_{\gamma \in \mathbb{R}}\{a + 2b + c \gamma^2\} = -\frac{b}{c}.
$$
Applying these facts to $\gamma \mapsto \ddot m(0)$, we see that this quadratic polynomial attains a global minimum at $\gamma = \frac{-Q[v]}{\left<v,v\right>_0}$, which is 
\begin{align}
\notag
\ddot m(0) 
&= 
\int_{\Omega} L(u, D u ) \ddot \varphi_0 \,dx 
+ 
2 \int_{\Omega} (D_z L(u, D u ), D u \cdot \dot \Psi_0) \dot \varphi_0 \,dx 
\\
\notag
&+ 
\int_{\Omega} (D_z L(u, D u ),D u \cdot \ddot \Psi_0) \,dx 
+ 
\int_{\Omega} (D^2_{zz} L(u, D u ),D u \cdot \dot \Psi_0 \otimes D u \cdot \dot \Psi_0) \,dx 
\\
\label{ddm_line_optimized}
&- \frac {(Q[u])^2}{\left< u,u \right>_0} 
-\frac { (Q[v])^2 } {  \left< v,v \right>_0}.
\end{align}
Thus, if $\left<v,v\right>_0 = -Q[v]>0$, then the expression of $\ddot{m}(0)$ for the corrector $v$ has the form \eqref{ddm_line_optimized}, and \eqref{ddm_line_optimized} is minimal on the class of correctors $\{\gamma v: \gamma \in \mathbb{R}\}$. 
Substituting the expressions \eqref{eq:dotphi}, \eqref{eq:ddotphi}, and \eqref{eq:dotpsi} into \eqref{ddm_line_optimized}, we obtain \eqref{ddm_intermediate_three}, which establishes Proposition \ref{thm:main}.

\section{Optimal corrector}\label{sec:optimal}
In this section, we discuss in a formal way an optimality of the choice of a corrector. 
We will work with the expression for $\ddot{m}(0)$ given by \eqref{ddm_line_optimized} (or, equivalently, \eqref{ddm_intermediate_three} of Proposition \ref{thm:main}). 
Notice that the sum
\begin{align*}
&\int_{\Omega} L(u, D u ) \ddot \varphi_0 \,dx 
+ 
2 \int_{\Omega} (D_z L(u, D u ), D u \cdot \dot \Psi_0) \dot \varphi_0 \,dx 
\\
\notag
&+ 
\int_{\Omega} (D_z L(u, D u ),D u \cdot \ddot \Psi_0) \,dx 
+ 
\int_{\Omega} (D^2_{zz} L(u, D u ),D u \cdot \dot \Psi_0 \otimes D u \cdot \dot \Psi_0) \,dx 
- \frac {(Q[u])^2}{\left< u,u \right>_0}
\end{align*}
in \eqref{ddm_line_optimized} depends solely on $u$, $R$, $\widetilde{R}$, and does not depend on a corrector $v$. 
Considered alone, this sum is expected to be positive. 
However, the last fraction $-\frac {(Q[v])^2}{\langle v,v \rangle_0}$ in \eqref{ddm_line_optimized} depends on $u$, $R$, $v$, and is nonpositive, which gives a possibility to prove that $\ddot m(0) < 0$. 
This inequality in combination with \eqref{eq:taylor} and the assumption $\dot m(0) = 0$ implies $\mu(\Omega_t) < \mu(\Omega)$ for sufficiently small $|t|$, which in turn means that $\Omega$ is not optimal. 
In that context, it is natural to call a corrector $v$ \textit{optimal} whenever it maximizes $\frac {(Q[v])^2}{\langle v,v \rangle_0}$ and satisfies $\left<v,v\right>_0 = -Q[v]>0$.

\subsection{Boundary value problem for optimal corrector}\label{sec:optimal-bvp}

Let us consider the maximization problem of finding the optimal corrector:
\begin{equation}\label{eq:maximizer1}
\lambda 
= 
\sup
\left\{
\frac{(Q[v])^2}{\left<v,v\right>_0}:~ v \in \wolp(\Omega) \setminus \{0\},~ \left<u,v\right>_0 = 0 
\right\}.
\end{equation}
In view of the homogeneity of the quotient $\frac{(Q[v])^2}{\left<v,v\right>_0}$, if $\lambda>0$ and it possesses a maximizer $v$, then the normalization constraint $\left<v,v\right>_0 = -Q[v]>0$ is achieved by a proper rescaling of $v$, see below. 	  
The following result can be obtained in a standard way via the Lagrange multipliers rule.
\begin{lemma}\label{lem:optimal}
	Assume that $\lambda>0$ and it possesses a maximizer $v$. 
	Then there exists $\beta \in \mathbb{R}$ such that $v$ satisfies 
	\begin{equation}\label{eq:opt1}
	2 Q[v] Q[h] - 2 \lambda \langle v,h \rangle_0 - \beta \langle u,h \rangle_0 = 0 
	~\text{ for all }~  
	h \in \wolp(\Omega).
	\end{equation}
\end{lemma}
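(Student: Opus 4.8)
The plan is to read \eqref{eq:maximizer1} as a smooth constrained maximization problem in the Hilbert-type geometry furnished by the bilinear form $\langle\cdot,\cdot\rangle_0$, and to apply the Lagrange multipliers rule. I introduce the objective functional $J[w] := (Q[w])^2/\langle w,w\rangle_0$ and the constraint functional $G[w] := \langle u,w\rangle_0$, so that $\lambda = \sup\{J[w]:\ w \neq 0,\ G[w]=0\}$ is attained at the given maximizer $v$. The first step is to record the basic nondegeneracy of $v$: since $\lambda>0$ is attained one has $(Q[v])^2 = \lambda\langle v,v\rangle_0$, and because Lemma \ref{lem:vv>0} gives $\langle v,v\rangle_0 \geq 0$ while the quotient is finite and strictly positive, in fact $\langle v,v\rangle_0 > 0$ and $Q[v]\neq 0$. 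Consequently $J$ is well defined and differentiable in a neighbourhood of $v$, with the denominator bounded away from zero.

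Next I would verify the constraint qualification required for the Lagrange rule, namely that $G'[v]=\langle u,\cdot\rangle_0$ is a nonzero functional on $\wolp(\Omega)$. This is immediate from assumption \ref{as:4}: testing at $h=u$ gives $G'[v]u=\langle u,u\rangle_0\neq 0$. Hence the level set $\{G=0\}$ is a smooth hypersurface near $v$, and the Lagrange multipliers rule produces a multiplier $\mu\in\mathbb{R}$ with $J'[v]=\mu\,G'[v]$ as functionals on $\wolp(\Omega)$.

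It then remains to compute $J'[v]h$ explicitly. Since $Q$ is linear and $\langle\cdot,\cdot\rangle_0$ is a symmetric bilinear form, the quotient rule gives
\begin{align*}
J'[v]h
&= \frac{2\,Q[v]\,Q[h]}{\langle v,v\rangle_0} - \frac{2\,(Q[v])^2\,\langle v,h\rangle_0}{\langle v,v\rangle_0^2} \\
&= \frac{2}{\langle v,v\rangle_0}\Bigl(Q[v]\,Q[h] - \lambda\,\langle v,h\rangle_0\Bigr),
\end{align*}
where I substituted $\lambda=(Q[v])^2/\langle v,v\rangle_0$. Equating this with $\mu\,\langle u,h\rangle_0$, multiplying through by $\langle v,v\rangle_0>0$, and renaming $\beta := \mu\,\langle v,v\rangle_0$ yields exactly \eqref{eq:opt1}. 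As a consistency check one may test the resulting identity with $h=u$ and $h=v$: the former is satisfied because $\langle u,v\rangle_0=0$, and the latter reproduces the relation $(Q[v])^2=\lambda\langle v,v\rangle_0$, reflecting the fact that $J$ is homogeneous of degree zero so that $J'[v]v=0$.

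The Euler--Lagrange computation itself is routine; the only point deserving care is the justification that the abstract Lagrange multipliers rule genuinely applies in this infinite-dimensional setting. Concretely, one must confirm that $J$ and $G$ are continuously differentiable (Fr\'echet, or at least G\^ateaux) on $\wolp(\Omega)$ and that the denominator $\langle v,v\rangle_0$ does not vanish on the feasible set near the maximizer. Both are secured by the strict positivity $\langle v,v\rangle_0>0$ established in the first step together with the hypothesis $\lambda>0$, while the nondegeneracy of the constraint follows from \ref{as:4}. No structural information about $L$, $R$, or $\widetilde R$ enters the argument.
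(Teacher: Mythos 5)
Your proof is correct and follows exactly the route the paper takes: the paper proves Lemma \ref{lem:optimal} by simply invoking ``the Lagrange multipliers rule in a standard way,'' and your write-up supplies precisely the routine details (nondegeneracy $\langle v,v\rangle_0>0$ via Lemma \ref{lem:vv>0}, the constraint qualification $\langle u,u\rangle_0\neq 0$ from \ref{as:4}, the quotient-rule computation of $J'[v]h$, and the rescaling $\beta=\mu\langle v,v\rangle_0$). One cosmetic slip in your side remark: testing \eqref{eq:opt1} with $h=u$ is not ``satisfied because $\langle u,v\rangle_0=0$'' but rather reduces to $\beta = 2Q[v]Q[u]/\langle u,u\rangle_0$, i.e., it determines the multiplier instead of checking anything --- this does not affect the argument.
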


In some particular cases it is possible to prove that in \eqref{eq:opt1} either $\beta = 0$ or $\langle u,h \rangle_0$ is identically zero, see, e.g., the case $Q[u]=0$ and the case of eigenvalue problems discussed in Remark \ref{rem:qu=0} and Section \ref{sec:special_eigenvalue} below, respectively.
If this is true, then the equation \eqref{eq:opt1} becomes
\begin{equation*}
Q[v] Q[h] - \lambda \langle v,h \rangle_0 = 0 ~\text{ for all }~ h \in \wolp(\Omega).
\end{equation*}
Taking $h=u$, we obtain $Q[u]=0$, and introducing $w = -\frac{\lambda}{Q[v]} v$, we arrive at
\begin{equation}
\label{eq:optimal_corrector}
\langle w,h \rangle_0 = -Q[h]
~\text{ for all }~ h \in \wolp(\Omega).
\end{equation}
Clearly, any solution of \eqref{eq:optimal_corrector} is also a maximizer of \eqref{eq:maximizer1}. 
Moreover, testing \eqref{eq:optimal_corrector} by $h=w$, we conclude that $\left<w,w\right>_0 = -Q[w]>0$. 
Therefore, under the above-mentioned hypotheses, \eqref{eq:optimal_corrector} can be seen as the boundary value problem for an optimal corrector. 

Let us remark that the problem \eqref{eq:optimal_corrector} is linear with respect to $w$ and its right-hand side depends linearly on $R$. The coefficients of the corresponding linear functionals, however, may depend on $u$ in a nonlinear way, and $u$ is usually not known in a closed form, which leads to difficulties when one tries to solve such a boundary value problem.

\subsection{Relation to minimizing trajectory}
An optimal corrector is closely related to the concept of minimizing trajectories. 
By the assumption \ref{as:3}, the functional $E_t$ possesses a least energy critical point $U_t$ for every $t \in (-\delta,\delta)$. That is, after the change of variables, we have 
\begin{equation}\label{eq:Euler_in_deformed_domains}
\int_{\Omega} \left [ (D_z L(u_t, D u_t \cdot \Psi_t), D h \cdot \Psi_t) + L_u(u_t, D u_t \cdot \Psi_t) h \right ] \varphi_t \,dx = 0 ~\text{ for all }~ h \in \wolp(\Omega),
\end{equation}
where
\begin{equation*}
u_t(x) = U_t(\Phi_t(x)), \quad x \in \Omega.
\end{equation*}
We call the family $\{U_t\}$ a \textit{minimizing trajectory}. 
Let us suppose that the family $\{U_t\}$ is smooth in the sense that $u_t$ is a differentiable function $(-\delta,\delta) \to \wolp(\Omega)$.
In this case, we can differentiate \eqref{eq:Euler_in_deformed_domains} by $t$ and obtain, after setting $t=0$, that
\begin{equation*}
E_0''[u_0](\dot u_0,h) + Q[h] = 0
~\text{ for all }~ h \in \wolp(\Omega).
\end{equation*}
This problem coincides with the problem \eqref{eq:optimal_corrector} and thus yields both the second-order derivative $\ddot{\mu}(\Omega)$ and the optimal corrector $w=\dot{u}_0$.

However, let us emphasize again that the minimizing trajectory $\{U_t\}$ can be very ``degenerate'' if one talks about superlinear problems of the type \eqref{Dm}. 
Namely, both the continuity and differentiability of such family is uncertain. 
Examples of a discontinuous minimizing trajectory can be easily constructed. 
For instance, consider the Lane-Emden problem \eqref{Dm} on a concentric spherical shell $\Omega_t = B_R \setminus \overline{B}_r$ of the width $t = R-r$, where $B_R$, $B_r$ are the balls of radius $R, r$, respectively, centred at the origin. 
The width $t$ can be taken small enough in order to guarantee that any least energy critical point of $E_t$ on $\Omega_t$ is \textit{nonradial} (see \cite{coffman,Kolon,Nazar2} for the existence results). In view of the isotropy of $E_t$ and radial symmetry of $\Omega_t$, every rotation $R U_t$ of a fixed least energy critical point $U_t$ is again a least energy critical point  of $E_t$. 
Therefore, taking for each sufficiently small $t$ an appropriate rotation $R U_t$, we obtain a discontinuous minimizing trajectory $\{RU_t\}$. 
On the other hand, even if we have a continuous family $\{U_t\}$, its differentiability still cannot be guaranteed, because it is usually proven by a variant of the inverse function theorem which requires the quadratic form $E_t''[U_t](h,h)$ to be nondegenerate. 
However, the concentric spherical shell $\Omega_t$ with sufficiently small $t$ again provides a counterexample: there exists a nonzero $\widetilde{h} \in \wolp(\Omega_t)$ such that $E_t''[U_t](\widetilde{h},\widetilde{h}) = 0$, see, e.g., \cite[Proposition 4.2]{CES}.

\section[Special cases with p-Laplacian]{Special cases with $p$-Laplacian}\label{sec:special}
In this section, we discuss a special case of Proposition \ref{thm:main} for the Lane-Emden problem \eqref{Dm} and obtain an analogue of Proposition \ref{thm:main} for the first eigenvalue \eqref{eq:nu0} of the $p$-Laplacian.

\subsection{Lane-Emden problem}\label{sec:special_nonhom}

Let $L(u,z) = \frac{1}{p} |z|^p - \frac{1}{q} |u|^q$, where $p, q \geq 2$, $q \neq p$, and $q < p^*$.
Critical points of the functional
$$
E_t[w] = \int_{\Omega_t} L(w,Dw) \,dx
$$ 
are in one-to-one correspondence with weak solutions of the problem \eqref{Dm} in $\Omega_t$. Note that both superlinear ($p<q$) and sublinear ($p>q$) behaviours are covered.

To be able to apply Proposition \ref{thm:main}, let us show that the assumptions \ref{as:2}-\ref{as:4} of Section \ref{sec:main} are fulfilled. 
It is not hard to see that the assumption \ref{as:2} holds true since $p,q \geq 2$. 
If $p<q$, then the validity of the assumption \ref{as:3} was proved, e.g., in \cite[Section 2]{GrumiauParini} (see also \cite[Theorem 19]{szulkin-weth}). 
If $p>q$, then $E_t$ has a nonzero 	global minimizer over $\wolp(\Omega_t)$ which is a critical point of $E_t$ and hence belongs to $\mathcal{N}(\Omega_t)$. That is, a global minimizer is a least energy critical point and it can be obtained as a minimizer of $\mu(\Omega_t)$. 
The first part of the assumption \ref{as:3.5} is standard, see, e.g., \cite[Lemma A.1]{BK}.
The second part of the assumption \ref{as:3.5} can be established in much the same way as in \cite[Lemma 2.5]{BK}. We provide the corresponding proof for the sake of completeness. 
\begin{lemma}\label{lem:as3}
	If $w \in \mathcal{N}(\Omega)$ and $v \in \wolp(\Omega)$, then there exist $\alpha_{(\cdot)}, \beta_{(\cdot)} \in C^1((-\delta,\delta), (0,+\infty))$ such that $\alpha_0=1$,  $\alpha_t \left(w(\Phi_t^{-1}(\cdot)) + t v(\Phi_t^{-1}(\cdot))\right)\in \mathcal{N}(\Omega_t)$, and $\beta_0=1$, $\beta_s \left(w + s v\right) \in \mathcal{N}(\Omega)$.
\end{lemma}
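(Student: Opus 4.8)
The plan is to exploit the pure homogeneity of the two terms of $E_t$ in the scaling parameter, which collapses the Nehari constraint into a single scalar algebraic equation that can be solved explicitly. For the Lane-Emden Lagrangian one has $D_z L(u,z) = |z|^{p-2} z$ and $L_u(u,z) = -|u|^{q-2} u$, so after the change of variables the requirement $\alpha_t \left(w(\Phi_t^{-1}(\cdot)) + t v(\Phi_t^{-1}(\cdot))\right) \in \mathcal{N}(\Omega_t)$ amounts to $\alpha_t^p A(t) - \alpha_t^q B(t) = 0$, where
\begin{equation*}
A(t) = \int_\Omega |D(w + t v) \cdot \Psi_t|^p \varphi_t \,dx,
\qquad
B(t) = \int_\Omega |w + t v|^q \varphi_t \,dx.
\end{equation*}
Since $w \neq 0$, both $A(t)$ and $B(t)$ are strictly positive for $|t|$ small by continuity, and because $p \neq q$ the equation has a unique positive root. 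First I would therefore simply \emph{define}
\begin{equation*}
\alpha_t := \left( \frac{B(t)}{A(t)} \right)^{1/(p-q)},
\end{equation*}
and, taking $\Phi_s = \mathrm{id}$ so that $\Psi_s = I$ and $\varphi_s = 1$, define $\beta_s := \bigl(\|w + s v\|_q^q / \|D(w+sv)\|_p^p\bigr)^{1/(p-q)}$ in exactly the same manner.

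Next I would verify the normalization at the base point. Using $\varphi_0 = 1$ and $\Psi_0 = I$ from \eqref{eq:dotpsi}, one gets $A(0) = \int_\Omega |Dw|^p\,dx$ and $B(0) = \int_\Omega |w|^q\,dx$, and the hypothesis $w \in \mathcal{N}(\Omega)$ gives precisely $A(0) = B(0)$, whence $\alpha_0 = 1$; the same computation gives $\beta_0 = 1$.

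The only genuine work is the $C^1$ regularity, which reduces to showing that $t \mapsto A(t)$ and $t \mapsto B(t)$ are continuously differentiable near $t = 0$, after which $\alpha_t$ and $\beta_s$ are $C^1$ as compositions of these $C^1$ maps with the smooth function $r \mapsto r^{1/(p-q)}$ applied to arguments bounded away from zero. Here I would differentiate under the integral sign: the coefficients $\Psi_t, \varphi_t$ depend smoothly (indeed rationally) on $t$ since $D\Phi_t = I + t\,DR + \tfrac12 t^2 D\widetilde R$, and since $p, q \geq 2$ the maps $z \mapsto |z|^p$ and $s \mapsto |s|^q$ are $C^1$, so the integrands are pointwise differentiable in $t$. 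The main obstacle, and the point requiring real care, is producing an $L^1(\Omega)$ dominating function for the difference quotients that is locally uniform in $t$: the gradient term is controlled by Hölder's inequality through $|D(w+tv)\cdot\Psi_t|^{p-1}\,|Dv\cdot\Psi_t| \in L^1(\Omega)$ (as $Dw, Dv \in L^p(\Omega)$ while $\Psi_t, \dot\Psi_t, \varphi_t, \dot\varphi_t$ are bounded on $\overline\Omega$ for small $t$), and the lower-order term is controlled by $|w+tv|^{q-1}\,|v| \in L^1(\Omega)$ via the Sobolev embedding $\wolp(\Omega) \hookrightarrow L^q(\Omega)$, which is available since $q < p^*$. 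With this domination in hand, the dominated convergence theorem yields $A, B \in C^1((-\delta,\delta), \mathbb{R})$, and the claimed regularity of $\alpha_t$ and $\beta_s$ follows, completing the argument.
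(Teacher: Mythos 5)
Your proof is correct, but it takes a different route from the paper. The paper does not solve for the normalization explicitly: it defines $F(\alpha,t)=\alpha^{p}A(t)-\alpha^{q}B(t)$ (in your notation), checks $F(1,0)=0$ and $F'_\alpha(1,0)=(p-q)\int_\Omega|w|^q\,dx\neq 0$ using $w\in\mathcal{N}(\Omega)$, and invokes the implicit function theorem to obtain a $C^1$ branch $\alpha_t$ with $\alpha_0=1$; the differentiability of $t\mapsto A(t),B(t)$ is asserted after the change of variables with the remark that it ``follows easily.'' You instead exploit the two-term homogeneity of the Lane--Emden functional to write the unique positive root in closed form, $\alpha_t=\left(B(t)/A(t)\right)^{1/(p-q)}$, which makes existence, uniqueness, positivity, and the normalization $\alpha_0=1$ immediate, and reduces everything to $A,B\in C^1$ near $t=0$ --- a point you then substantiate with the dominated-convergence argument (H\"older for the gradient term, the Sobolev embedding $\wolp(\Omega)\hookrightarrow L^q(\Omega)$ for the lower-order term, and boundedness of $\Psi_t,\dot\Psi_t,\varphi_t,\dot\varphi_t$ for small $|t|$) that the paper leaves implicit. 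The trade-off: your closed-form approach is more elementary and more self-contained on the regularity of $A$ and $B$, but it is tied to the homogeneous structure $L(s,z)=\frac1p|z|^p-\frac1q|s|^q$, whereas the paper's implicit-function-theorem scheme is the one that transfers to the general assumption \ref{as:3.5} where no explicit scaling root is available. One cosmetic point: your exponent bookkeeping $\alpha^{q-p}=A/B$, hence $\alpha_t=(B/A)^{1/(p-q)}$, is right for both signs of $p-q$, and $r\mapsto r^{1/(p-q)}$ is smooth on arguments bounded away from zero, so the composition argument goes through regardless of whether the problem is superlinear or sublinear --- the same uniformity the paper gets by noting only that $F'_\alpha(1,0)\neq 0$ matters (its displayed sign $<0$ presumes $p<q$).
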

\begin{proof}
	At first, we obtain the existence of the function $\alpha_{(\cdot)}$.
	Consider the function $F: (0,+\infty)\times(-\delta,\delta) \to \mathbb{R}$ defined by 
	$$
	F(\alpha,t) = \alpha^{p} \int_{\Omega_t} |D \left(w(\Phi_t^{-1}(y)) + t v(\Phi_t^{-1}(y))\right)|^p \, dy 
	- 
	\alpha^q \int_{\Omega_t} |w(\Phi_t^{-1}(y)) + t v(\Phi_t^{-1}(y))|^q \, dy.
	$$
	Trivially, $F(\cdot,t)$ is differentiable on $(0, +\infty)$ for any $t \in (-\delta,\delta)$. 
	Moreover, $F(\alpha, \cdot)$ is differentiable on $(-\delta, \delta)$ for any $\alpha > 0$.
	Indeed, performing the change of variables, we get
	$$
	F(\alpha,t) = \alpha^{p} \int_{\Omega} |D \left(w + t v\right) \cdot \Psi_t|^p \varphi_t \, dx 
	- 
	\alpha^q \int_{\Omega} |w + t v|^q \varphi_t \, dx,
	$$
	and hence the claimed differentiability follows easily. 
	Since $w \in \mathcal{N}(\Omega)$, we have $F(1,0) = 0$ and
	$$
	F_\alpha'(1,0) = p \int_{\Omega} |D w|^p \, dx - q \int_{\Omega} |w|^q \, dx 
	= 
	(p-q) \int_{\Omega} |w|^q \, dx < 0.
	$$
	Hence, taking $\delta > 0$ smaller (if necessary), we apply the implicit function theorem to deduce the existence of a differentiable function $\alpha_{(\cdot)}: (-\delta,\delta) \to (0,+\infty)$ such that $\alpha_0 = 1$ and $F(\alpha_t,t) = 0$ for all $t \in (-\delta,\delta)$. Noting that the latter equality reads as $\alpha_t \left(w(\Phi_t^{-1}(\cdot)) + t v(\Phi_t^{-1}(\cdot))\right)\in \mathcal{N}(\Omega_t)$, we complete the proof.  
	
	The existence of the function $\beta_{(\cdot)}$ can be obtained arguing along the same lines as above by applying the implicit function theorem to the function $G: (0,+\infty)\times(-\delta,\delta) \to \mathbb{R}$ defined by 
	$$
	G(\beta,s) = \beta^{p} \int_{\Omega} |D \left(w + s v\right)|^p \, dx - \beta^q \int_{\Omega} |w + s v|^q \, dx
	$$
	in a neighbourhood of the point $(1,0)$.
\end{proof}

Finally, to establish the assumption \ref{as:4}, we directly calculate that
\begin{equation}\label{eq:uuneq0}
\left< u, u\right>_0 = E_0''[u](u,u) = (p-1) \int_\Omega |D u|^p \,dx - (q-1) \int_\Omega |u|^q \,dx = (p-q) \int_\Omega |u|^q \,dx \neq 0
\end{equation}
since $p \neq q$,
where we used the fact that $u \in \mathcal{N}(\Omega)$.

\medskip
The main result of this section is the following proposition. 
\begin{proposition}\label{prop:lane-emden}
	Let $u \in \wolp(\Omega)$ be a least energy critical point of $E_0$. 
	Assume that $\dot m(0) = 0$. 
	If a corrector $v \in \wolp(\Omega)$ satisfies $\int_{\Omega} |u|^{q-2} u v \,dx = 0$ and $\left< v, v \right>_0 = -Q[v] > 0$, then
	\begin{align}
	\notag
	\ddot m(0) 
	&= 
	\frac{1}{p}\int_{\Omega} |Du|^{p} \,{\rm div}(\widetilde{R}) \,dx 
	- \frac{1}{q}\int_{\Omega} |u|^{q} \, {\rm div}(\widetilde{R}) \,dx
	-\int_{\Omega} |Du|^{p-2} (Du,  D u \cdot D\widetilde{R}) \,dx
	\\
	\notag
	&+
	\frac{2}{p} \int_{\Omega} |Du|^p \chi_2(DR) \,dx 
	- 
	\frac{2}{q} \int_{\Omega} |u|^q \chi_2(DR) \,dx 
	\\
	\notag
	&-2 \int_{\Omega} |Du|^{p-2}(Du, D u \cdot DR) \, {\rm div}(R) \,dx 
	+ 2 \int_{\Omega} |Du|^{p-2}(Du, D u \cdot DR \cdot DR) \,dx
	\\
	\notag
	& + \int_{\Omega} |Du|^{p-2}(D u \cdot DR, D u \cdot DR) \,dx 
	+ (p-2)\int_{\Omega} |Du|^{p-4}(D u, D u \cdot DR)^2 \,dx 
	\\
	\label{eq:ddotm-plaplacian}
	&+ 
	\frac{(q-p)}{q^2}\frac {\left(\int_{\Omega} |u|^{q} \, {\rm div}(R) \,dx\right)^2}{\int_{\Omega} |u|^{q} \,dx} 
	-\frac { (Q[v])^2 } {\left< v,v \right>_0},
	\end{align}
	where $\chi_2(DR)$ is defined in \eqref{eq:chi2}.
\end{proposition}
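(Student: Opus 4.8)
The plan is to specialize the general formula \eqref{ddm_intermediate_three} of Proposition \ref{thm:main} to the concrete Lagrangian $L(s,z) = \frac{1}{p}|z|^p - \frac{1}{q}|s|^q$. Since the assumptions \ref{as:2}-\ref{as:4} have already been verified for this $L$ above, it remains only to evaluate each term occurring in \eqref{ddm_intermediate_three}. First I would record the relevant partial derivatives of $L$: $D_z L(u, Du) = |Du|^{p-2}Du$, $L_u(u, Du) = -|u|^{q-2}u$, $D_z L_u \equiv 0$, $D^2_{zz}L(u, Du) = |Du|^{p-2}I + (p-2)|Du|^{p-4}Du\otimes Du$, and $L_{uu}(u, Du) = -(q-1)|u|^{q-2}$. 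Substituting these into the bilinear form \eqref{eq:<>0} and using that $u$ is a critical point of $E_0$ (so that $\int_{\Omega} |Du|^{p-2}(Du, Dv)\,dx = \int_{\Omega} |u|^{q-2}uv\,dx$), the quadratic term in $Du \cdot Dv$ collapses and one obtains $\left< u, v\right>_0 = (p-q)\int_{\Omega} |u|^{q-2}uv\,dx$. As $p \neq q$, the orthogonality hypothesis $\left< u, v\right>_0 = 0$ of Proposition \ref{thm:main} is thus equivalent to the stated condition $\int_{\Omega} |u|^{q-2}uv\,dx = 0$, which is what permits the application of that proposition.

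Next I would substitute the derivatives of $L$ into the first six (deformation-deformation) integrals of \eqref{ddm_intermediate_three}. This step is purely mechanical. The term $\int_{\Omega} L(u,Du)\,{\rm div}(\widetilde{R})\,dx$ splits into the $|Du|^p$ and $|u|^q$ contributions; the term $2\int_{\Omega} L(u,Du)\chi_2(DR)\,dx$ splits analogously; and every contraction with $D_z L$ produces the stated integrals carrying the factor $|Du|^{p-2}Du$. The only contractions demanding attention are those with $D^2_{zz}L$: using the rank-one structure above, $\int_{\Omega} (D^2_{zz}L, Du\cdot DR \otimes Du\cdot DR)\,dx$ separates into the isotropic part $\int_{\Omega} |Du|^{p-2}(Du\cdot DR, Du\cdot DR)\,dx$ and the part $(p-2)\int_{\Omega} |Du|^{p-4}(Du, Du\cdot DR)^2\,dx$. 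Each resulting integral maps directly onto one line of \eqref{eq:ddotm-plaplacian}.

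The one term requiring genuine simplification, and hence the crux of the argument, is $-\frac{(Q[u])^2}{\left< u,u\right>_0}$. Evaluating $Q[u]$ from \eqref{eq:Qh} with the above derivatives, and using $(D^2_{zz}L(u,Du))\,Du = (p-1)|Du|^{p-2}Du$, gives
$$Q[u] = -p\int_{\Omega} |Du|^{p-2}(Du, Du\cdot DR)\,dx + \int_{\Omega} |Du|^p\,{\rm div}(R)\,dx - \int_{\Omega} |u|^q\,{\rm div}(R)\,dx.$$
Here the key observation is that the hypothesis $\dot m(0) = 0$, read off from \eqref{eq:dotm01}, furnishes exactly the relation $\int_{\Omega} |Du|^{p-2}(Du, Du\cdot DR)\,dx = \frac{1}{p}\int_{\Omega} |Du|^p\,{\rm div}(R)\,dx - \frac{1}{q}\int_{\Omega} |u|^q\,{\rm div}(R)\,dx$. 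Inserting it cancels the $|Du|^p$ integral and reduces $Q[u]$ to the clean form $Q[u] = \frac{p-q}{q}\int_{\Omega} |u|^q\,{\rm div}(R)\,dx$. Combined with $\left< u,u\right>_0 = (p-q)\int_{\Omega} |u|^q\,dx$ from \eqref{eq:uuneq0}, this yields precisely the single fraction $\frac{q-p}{q^2}\frac{(\int_{\Omega} |u|^q\,{\rm div}(R)\,dx)^2}{\int_{\Omega} |u|^q\,dx}$ appearing in \eqref{eq:ddotm-plaplacian}. Since the corrector-dependent term $-\frac{(Q[v])^2}{\left< v,v\right>_0}$ transfers verbatim from \eqref{ddm_intermediate_three}, collecting all contributions finishes the proof. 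I expect the bookkeeping of the tensor contractions with $D^2_{zz}L$, rather than any conceptual difficulty, to be the main obstacle.
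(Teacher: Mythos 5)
Your proposal is correct and follows essentially the same route as the paper: specialize Proposition \ref{thm:main} by computing the derivatives of $L(s,z)=\frac{1}{p}|z|^p-\frac{1}{q}|s|^q$, identify $\left<u,v\right>_0=(p-q)\int_\Omega |u|^{q-2}uv\,dx$ via criticality of $u$, and use $\dot m(0)=0$ together with \eqref{eq:Q[u]sim} and \eqref{eq:uuneq0} to reduce $-\frac{(Q[u])^2}{\left<u,u\right>_0}$ to the fraction $\frac{(q-p)}{q^2}\frac{\left(\int_\Omega |u|^q\,{\rm div}(R)\,dx\right)^2}{\int_\Omega |u|^q\,dx}$. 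All intermediate computations (including the rank-one decomposition of $D^2_{zz}L$ and the simplification $Q[u]=\frac{p-q}{q}\int_\Omega|u|^q\,{\rm div}(R)\,dx$) match the paper's.
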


Let us discuss how Proposition \ref{prop:lane-emden} follows from Proposition \ref{thm:main} and provide expressions for $\left< v,v \right>_0$ and $Q[v]$. 
First, we observe from \eqref{eq:<>0} and the fact that $u$ is a critical point of $E_0$ that the assumption $\left< u,v \right>_0 = 0$ reads as
$$
\left< u,v \right>_0 
=
(p-1) \int_{\Omega} |Du|^{p-2} (D u, D v) \,dx  - (q-1) \int_{\Omega} |u|^{q-2} u v \,dx
=
(p-q) \int_{\Omega} |u|^{q-2} u v \,dx = 0.
$$
Moreover, $\left< u,u \right>_0$ is given by \eqref{eq:uuneq0} and $\left< v,v \right>_0$ is written in the following way:
\begin{equation*}
\left< v, v \right>_0 
= 
\int_{\Omega} |Du|^{p-2} |D v|^2 \,dx 
+
(p-2) \int_{\Omega} |Du|^{p-4} (Du, D v)^2 \,dx 
- 
(q-1)\int_{\Omega} |u|^{q-2} v^2 \,dx.
\end{equation*}
Second, the functional $Q$ is given by
\begin{align*}
Q[h] 
= 
&-\int_{\Omega} |Du|^{p-2} (D h,  D u \cdot DR) \,dx 
-(p-2)\int_{\Omega} |Du|^{p-4} (Du, D h) (Du,  D u \cdot DR) \,dx 
\\
\notag
&- \int_{\Omega} |Du|^{p-2} (D u,D h \cdot DR) \,dx 
+ \int_{\Omega} |Du|^{p-2} (D u,D h) \, {\rm div}(R) \,dx 
- \int_{\Omega} |u|^{q-2} u h \, {\rm div}(R) \,dx.
\end{align*}
In particular, we get
\begin{equation}\label{eq:Q[u]sim}
Q[u] 
= 
-p\int_{\Omega} |Du|^{p-2} (Du,  D u \cdot DR) \,dx 
+ \int_{\Omega} |Du|^{p} \, {\rm div}(R) \,dx 
- \int_{\Omega} |u|^{q} \, {\rm div}(R) \,dx.
\end{equation}
Let us assume that $\dot m(0) = 0$. 
In view of Proposition \ref{prop:first-derivative}, we have 
\begin{equation}\label{eq:m0emden}
\dot m(0)
=
-\int_{\Omega} |Du|^{p-2} (Du,  D u \cdot DR) \,dx
+
\frac{1}{p}\int_{\Omega} |Du|^{p} \, {\rm div}(R) \,dx 
- \frac{1}{q}\int_{\Omega} |u|^{q} \, {\rm div}(R) \,dx = 0.
\end{equation}
Using \eqref{eq:m0emden}, the expression \eqref{eq:Q[u]sim} can be simplified as 
\begin{align*}
Q[u] 
= 
\frac{p-q}{q}\int_{\Omega} |u|^{q} \, {\rm div}(R) \,dx.
\end{align*}
Combining all these expressions, we directly obtain Proposition \ref{prop:lane-emden} from Proposition \ref{thm:main}. 

\begin{remark}\label{rem:qu=0}
	If the vector field $R$ is divergence-free, then we get $Q[u]=0$. 
	Consequently, if the problem \eqref{eq:optimal_corrector} possesses a solution $w$ such that $Q[w] < 0$, then $w$ is the optimal corrector.
\end{remark}

\subsection{Eigenvalue problem}\label{sec:special_eigenvalue}
In this section, we establish a second-order estimate for the first eigenvalue of the $p$-Laplacian
\begin{equation}\label{eq:lambda1}
\lambda_1(\Omega_t) 
=
\inf_{w \in \wolp(\Omega_t) \setminus \{0\}} \frac{\int_{\Omega_t} |D w|^p \,dy}{\int_{\Omega_t} |w|^p \,dy}.
\end{equation}
Although the functional $\lambda_1(\Omega_t)$ does not directly fall within the assumptions of Section \ref{sec:main}, we will discuss how the arguments of Sections \ref{sec:first-order} and \ref{sec:second_derivative} can be modified to cover this case.

Denote by $u \in \wolp(\Omega)$ a minimizer of $\lambda_1(\Omega)$, that is, the first eigenfunction of the $p$-Laplacian. Note that $u$ is unique up to scaling, see, e.g., \cite{Lind}.
As in Section \ref{sec:main}, consider an admissible function for the minimization problem \eqref{eq:lambda1} of the form 
$$
U_t(y) = u(\Phi_t^{-1}(y)) + t v(\Phi_t^{-1}(y)), \quad y \in \Omega_t,
$$
and its transposition to $\Omega$, $u_t(x) = u(x) + t v(x)$, $x \in \Omega$, where $v \in \wolp(\Omega)$ is an arbitrary corrector. 
Let us denote 
$$
\nu(t) = \frac{\int_{\Omega_t} |D U_t|^p \,dy}{\int_{\Omega_t} |U_t|^p \,dy} = 
\frac{\int_{\Omega} |D u_t \cdot \Psi_t|^p \varphi_t \,dx}{\int_{\Omega} |u_t|^p \varphi_t \,dx}.
$$ 
We see that $\lambda_1(\Omega) = \nu(0)$ and $\lambda_1(\Omega_t) \leq \nu(t)$ for all $|t|<\delta$. 
Thus, as in Section \ref{sec:main}, Taylor's theorem applied to $\nu(t)$ yields
\begin{equation*}
\lambda(\Omega_t) \leq \lambda(\Omega) + \dot{\nu}(0) t + \frac{\ddot{\nu}(0)}{2} t^2 + o(t^2).
\end{equation*}

First, we present an expression for $\dot{\nu}(0)$.
\begin{proposition}\label{prop:eigen-first}
	Let $u \in \wolp(\Omega)$ be a minimizer of $\lambda_1(\Omega)$. 
	Then
	\begin{align}
	\notag
	\dot \nu(0)
	= 
	\frac{p}{\int_{\Omega} |u|^p \,dx} 
	\bigg[
	\frac{1}{p}\int_{\Omega} |Du|^{p} \, {\rm div}(R) \,dx 
	&-\frac{\lambda_1(\Omega)}{p}\int_{\Omega} |u|^{p} \, {\rm div}(R) \,dx
	\\
	\label{eq:m0eigen}
	&-\int_{\Omega} |Du|^{p-2} (Du,  D u \cdot DR) \,dx
	\bigg] = \frac{Q[u]}{\int_\Omega |u|^p \,dx}.
	\end{align} 
\end{proposition}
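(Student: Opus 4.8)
The plan is to differentiate $\nu(t)$ directly via the quotient rule, exploiting the fact that here the admissible function $u_t = u + tv$ carries no normalization coefficient, so $\dot u_t = v$ and the computation is considerably lighter than the one for $m(t)$ in Section \ref{sec:second_derivative}. Write $\nu(t) = N(t)/D(t)$ with $N(t) = \int_\Omega |D u_t \cdot \Psi_t|^p \varphi_t \,dx$ and $D(t) = \int_\Omega |u_t|^p \varphi_t \,dx$. Since $\varphi_0 = 1$, $\Psi_0 = I$, and $u_0 = u$, we have $N(0) = \int_\Omega |Du|^p \,dx$, $D(0) = \int_\Omega |u|^p \,dx$, and $\nu(0) = \lambda_1(\Omega)$; in particular $N(0) = \lambda_1(\Omega) D(0)$.

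First I would compute the two derivatives at $t=0$. Differentiating $D(t)$ and using $\dot\varphi_0 = {\rm div}(R)$ gives
$$
\dot D(0) = p\int_\Omega |u|^{p-2} u\, v \,dx + \int_\Omega |u|^p \,{\rm div}(R) \,dx.
$$
Differentiating $N(t)$ requires the chain rule applied to $|Du_t \cdot \Psi_t|^p$, producing the factor $p|Du_t \cdot \Psi_t|^{p-2}(Du_t \cdot \Psi_t,\, Dv \cdot \Psi_t + Du_t \cdot \dot\Psi_t)$ together with the term coming from $\dot\varphi_t$; evaluating at $t=0$ with $\dot\Psi_0 = -DR$ yields
$$
\dot N(0) = \int_\Omega |Du|^p \,{\rm div}(R) \,dx + p\int_\Omega |Du|^{p-2}(Du, Dv) \,dx - p\int_\Omega |Du|^{p-2}(Du, Du \cdot DR) \,dx.
$$

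The quotient rule then collapses, thanks to $N(0) = \lambda_1(\Omega) D(0)$, to $\dot\nu(0) = \bigl(\dot N(0) - \lambda_1(\Omega)\dot D(0)\bigr)/D(0)$. The crucial step, which is the exact analogue of the identity $E_0'[u]v = 0$ used in the proof of Proposition \ref{prop:first-derivative}, is that the corrector-dependent terms cancel: testing the weak eigenvalue equation for $u$ against $v$ gives $p\int_\Omega |Du|^{p-2}(Du, Dv) \,dx = \lambda_1(\Omega)\, p\int_\Omega |u|^{p-2} u\, v \,dx$, so the $\int |Du|^{p-2}(Du,Dv)$ contribution in $\dot N(0)$ exactly annihilates the $\lambda_1(\Omega)\, p\int |u|^{p-2}u v$ contribution in $\lambda_1(\Omega)\dot D(0)$. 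What remains is manifestly independent of $v$, and regrouping it produces precisely the bracketed expression in \eqref{eq:m0eigen}.

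Finally, I would observe that this remaining numerator is exactly $Q[u]$ for the present functional (that is, for $L(s,z) = \frac1p|z|^p - \frac{\lambda_1(\Omega)}{p}|s|^p$), which yields the compact form $\dot\nu(0) = Q[u]/\int_\Omega |u|^p \,dx$. The only point requiring care is the justification of differentiation under the integral sign and of the differentiability of the quotient near $t=0$; this follows from $p \geq 2$ and the smoothness of $\Phi_t$ exactly as for the standing assumptions, so I do not anticipate a genuine obstacle there. The conceptual content of the proof is entirely concentrated in the eigenfunction cancellation, which is what forces $\dot\nu(0)$ to be independent of the choice of corrector $v$.
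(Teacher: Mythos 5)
Your proof is correct and takes essentially the same route as the paper: the paper differentiates the identity $\widetilde m(t)=\frac1p\bigl(N(t)-\nu(t)D(t)\bigr)\equiv 0$ and solves $\dot{\widetilde m}(0)=0$ for $\dot\nu(0)$, which is precisely your quotient rule in disguise, and its invocation of the Section~\ref{sec:first-order} machinery with $\alpha_t\equiv 1$ corresponds exactly to your computations of $\dot N(0)$, $\dot D(0)$ and the cancellation of the corrector terms via the weak eigenvalue equation (the analogue of $E_0'[u]v=0$).
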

\begin{remark}\label{rem:eigen}
	Assume that $\Omega$ is of class $C^{1,\zeta}$, $\zeta \in (0,1)$. 
	Then $u \in C^{1,\kappa}(\overline{\Omega})$ for some $\kappa \in (0,1)$, see \cite{lieberman}.
	Therefore, the Pohozaev identity (see, e.g., \cite[Lemma 2]{degiovanni}) applied to \eqref{eq:m0eigen} yields
	\begin{align*}
	\dot \nu(0) 
	=
	-\frac{(p-1)}{\int_{\Omega} |u|^p \,dx} \int_{\partial \Omega} |Du|^p (R,n) \,d\sigma,
	\end{align*}
	where $n$ is the outward unit normal vector to $\partial \Omega$, cf.\ \cite{garcia,lamberti}.
\end{remark}

Second, assuming $\dot{\nu}(0)=0$, we give the following expression for $\ddot{\nu}(0)$.
\begin{proposition}\label{prop:eigen}
	Let $u \in \wolp(\Omega)$ be a minimizer of $\lambda_1(\Omega)$. 
	Assume that $\dot \nu(0) = 0$. 
	If a corrector $v \in \wolp(\Omega)$ satisfies $\left< v, v \right>_0 = -Q[v] > 0$, then
	\begin{align*}
	\notag
	\ddot{\nu}(0) 
	= 
	\frac{p}{\int_{\Omega} |u|^p \,dx} 
	&\bigg[
	\frac{1}{p}\int_{\Omega} |Du|^{p} \, {\rm div}(\widetilde{R}) \,dx 
	-\frac{\lambda_1(\Omega)}{p}\int_{\Omega} |u|^{p} \, {\rm div}(\widetilde{R}) \,dx
	-\int_{\Omega} |Du|^{p-2} (Du,  D u \cdot D\widetilde{R}) \,dx
	\\
	\notag
	&+
	\frac{2}{p} \int_{\Omega} |Du|^p \chi_2(DR) \,dx 
	- 
	\frac{2\lambda_1(\Omega)}{p} \int_{\Omega} |u|^p \chi_2(DR) \,dx 
	\\
	\notag
	&-2 \int_{\Omega} |Du|^{p-2}(Du, D u \cdot DR) \, {\rm div}(R) \,dx 
	+ 2 \int_{\Omega} |Du|^{p-2}(Du, D u \cdot DR \cdot DR) \,dx
	\\
	\notag
	& + \int_{\Omega} |Du|^{p-2}(D u \cdot DR, D u \cdot DR) \,dx 
	+ (p-2)\int_{\Omega} |Du|^{p-4}(D u, D u \cdot DR)^2 \,dx 
	\\
	&
	-\frac { (Q[v])^2 } {\left< v,v \right>_0}
	\bigg],
	\end{align*}
	where $\chi_2(DR)$ is defined in \eqref{eq:chi2}, the functional $Q[v]$ is given by
	\begin{align*}
	Q[v] 
	= 
	&-\int_{\Omega} |Du|^{p-2} (D v,  D u \cdot DR) \,dx 
	-(p-2)\int_{\Omega} |Du|^{p-4} (Du, D v) (Du,  D u \cdot DR) \,dx 
	\\
	&- \int_{\Omega} |Du|^{p-2} (D u,D v \cdot DR) \,dx 
	+ \int_{\Omega} |Du|^{p-2} (D u,D v) \, {\rm div}(R) \,dx 
	\\
	&- \lambda_1(\Omega)\int_{\Omega} |u|^{p-2} u v \, {\rm div}(R) \,dx,
	\end{align*}
	and $\left< v,v \right>_0$ is written as
	\begin{align*}
	\left< v, v \right>_0 
	&= 
	\int_{\Omega} |Du|^{p-2} |D v|^2 \,dx 
	+
	(p-2) \int_{\Omega} |Du|^{p-4} (Du, D v)^2 \,dx 
	- 
	(p-1)\lambda_1(\Omega)\int_{\Omega} |u|^{p-2} v^2 \,dx.
	\end{align*}
\end{proposition}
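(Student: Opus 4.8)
The plan is to read the Rayleigh quotient $\nu(t)$ through the energy framework of Sections~\ref{sec:first-order}--\ref{sec:second_derivative}, applied to the Lagrangian $L(s,z)=\frac1p|z|^p-\frac{\lambda_1(\Omega)}{p}|s|^p$, while exploiting the scale-invariance of $\nu$ to dispense with the Nehari normalization. With this $L$, the first eigenfunction $u$ is a critical point of $E_0$, since $E_0'[u]h=\int_\Omega|Du|^{p-2}(Du,Dh)\,dx-\lambda_1(\Omega)\int_\Omega|u|^{p-2}uh\,dx=0$ is exactly the weak eigenvalue equation. A direct computation from \eqref{eq:<>0} then gives $\langle u,u\rangle_0=(p-1)\bigl(\int_\Omega|Du|^p\,dx-\lambda_1(\Omega)\int_\Omega|u|^p\,dx\bigr)=0$ and, more generally, $\langle u,v\rangle_0=0$ for every $v\in\wolp(\Omega)$. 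This degeneracy is precisely why the eigenvalue problem violates assumption~\ref{as:4}; its silver lining is that the orthogonality condition $\langle u,v\rangle_0=0$ required in Proposition~\ref{lem:second-derivative-uv=0} now holds automatically for every corrector, and the sign information $\langle v,v\rangle_0\ge 0$ follows from $u$ being a minimizer of the Rayleigh quotient, in analogy with Lemma~\ref{lem:vv>0}.

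Next I would reduce $\ddot\nu(0)$ to a second derivative of the energy. Writing $\nu(t)=A(t)/B(t)$ with $A(t)=\int_\Omega|Du_t\cdot\Psi_t|^p\varphi_t\,dx$ and $B(t)=\int_\Omega|u_t|^p\varphi_t\,dx$, I observe that the energy $m(t):=E_t[U_t]$ for the Lagrangian above satisfies $p\,m(t)=A(t)-\lambda_1(\Omega)B(t)$; here $u_t=u+tv$ carries \emph{no} normalization coefficient, whence $\dot u_t=v$ and $\ddot u_t=0$. Differentiating the quotient twice, using $A(0)=\lambda_1(\Omega)B(0)$ and the hypothesis $\dot\nu(0)=0$ (equivalently $Q[u]=0$, by Proposition~\ref{prop:eigen-first}) to annihilate the cross term, the quotient rule collapses to
\[
\ddot\nu(0)=\frac{\ddot A(0)-\lambda_1(\Omega)\,\ddot B(0)}{B(0)}=\frac{p\,\ddot m(0)}{\int_\Omega|u|^p\,dx},
\]
the second-order analogue of the relation already recorded in \eqref{eq:m0eigen}.

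It then remains to recompute $\ddot m(0)$ exactly as in Section~\ref{sec:second_derivative}, but with $u_t=u+tv$ replacing $\alpha_t(u+tv)$. Since there is no normalization, $\dot\alpha_0=0$ throughout, so every $\dot\alpha_0$-term in the intermediate expression \eqref{ddm_intermediate} disappears; in particular the term $-(Q[u])^2/\langle u,u\rangle_0$, whose denominator is now zero, is never formed. What remains is $\ddot m(0)=D+2Q[v]+\langle v,v\rangle_0$, where $D$ denotes the four deformation--deformation integrals of \eqref{ddm_intermediate} evaluated at $u$. Under the normalization $\langle v,v\rangle_0=-Q[v]>0$ the last two summands combine into $-(Q[v])^2/\langle v,v\rangle_0$, just as in the passage from \eqref{ddm_unoptimized} to \eqref{ddm_line_optimized}.

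Finally I would make $D$ explicit. Substituting $\dot\varphi_0,\ddot\varphi_0,\dot\Psi_0,\ddot\Psi_0$ from \eqref{eq:dotphi}--\eqref{eq:dotpsi}, together with $D_zL(u,Du)=|Du|^{p-2}Du$ and $(D^2_{zz}L(u,Du),\xi\otimes\eta)=|Du|^{p-2}(\xi,\eta)+(p-2)|Du|^{p-4}(Du,\xi)(Du,\eta)$, and specializing the general forms \eqref{eq:<>0} and \eqref{eq:Qh} of $\langle\cdot,\cdot\rangle_0$ and $Q$ (noting $D_zL_u\equiv0$ and $L_{uu}=-(p-1)\lambda_1(\Omega)|s|^{p-2}$), one recovers term by term the bracketed expression in the statement, along with the displayed formulas for $Q[v]$ and $\langle v,v\rangle_0$. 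Multiplying by $p/\int_\Omega|u|^p\,dx$ as dictated by the quotient-rule identity produces the claimed formula. The main obstacle here is conceptual rather than computational: one must justify that the degeneracy $\langle u,u\rangle_0=0$ does not obstruct the derivation, which rests on replacing the Nehari normalization by the scale-invariance of the Rayleigh quotient, thereby forcing $\dot\alpha_0=0$ and ensuring that the ill-defined ratio $-(Q[u])^2/\langle u,u\rangle_0$ never arises.
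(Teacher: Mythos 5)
Your proposal is correct and takes essentially the same route as the paper: both reduce $\ddot{\nu}(0)$ to the energy computation of Section \ref{sec:second_derivative} carried out with $\alpha_t \equiv 1$ (so that $\dot{\alpha}_0 = 0$ and the degenerate ratio $-(Q[u])^2/\left<u,u\right>_0$ never forms, exactly the point of Remark \ref{rem:eigen-zero}), and then specialize $L$, $Q$, and $\left<\cdot,\cdot\right>_0$ to the $p$-homogeneous Lagrangian. The only cosmetic difference is packaging: the paper inserts $\nu(t)$ into the Lagrangian so that $\widetilde{m}(t) \equiv 0$ and solves $\ddot{\widetilde{m}}(0) = 0$ for $\ddot{\nu}(0)$, while you keep $\lambda_1(\Omega)$ fixed and derive the identical relation $\ddot{\nu}(0) = p\,\ddot{m}(0)/\int_\Omega |u|^p\,dx$ by the quotient rule — algebraically the same step, since $m(t) = \tfrac{1}{p}\left(\nu(t) - \lambda_1(\Omega)\right)\int_\Omega |u_t|^p \varphi_t\,dx$.
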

\begin{remark}\label{rem:eigen-zero}
	Let us explicitly mention that
	$$
	\left<u,h\right>_0 = (p-1) \left(\int_\Omega |D u|^{p-2} (Du, Dh)\,dx - \lambda_1(\Omega) \int_\Omega |u|^{p-2}  uh \,dx\right) = 0
	~\text{ for any }~ h \in \wolp(\Omega),
	$$
	since $u$ is the first eigenfunction of the $p$-Laplacian.
\end{remark}

In the linear case $p=2$, the expressions in Proposition \ref{prop:eigen} can be simplified. 
Note first that
\begin{equation}\label{eq:vv-eigen}
\left< v, v \right>_0 
= 
\int_{\Omega} |D v|^2 \,dx 
- 
\lambda_1(\Omega)\int_{\Omega} v^2 \,dx.
\end{equation}
Recalling that the first eigenfunction $u$ is unique up to scaling, we see from the definition of $\lambda_1(\Omega)$ that $\left< v, v \right>_0 > 0$ for any $v \in \wol2(\Omega) \setminus \{\mathbb{R} u\}$. Thus, we have the following result.
\begin{proposition}\label{lem:eigen}
	Let $p=2$ and $u \in \wol2(\Omega)$ be a minimizer of $\lambda_1(\Omega)$. 
	Assume that $\dot \nu(0) = 0$. 
	If a corrector $v \in \wol2(\Omega) \setminus \{\mathbb{R} u\}$ satisfies $\left< v, v \right>_0 = -Q[v]$, then
	\begin{align*}
	\notag
	\ddot{\nu}(0) 
	= 
	\frac{2}{\int_{\Omega} |u|^2 \,dx} 
	\bigg[
	&\frac{1}{2}\int_{\Omega} |Du|^{2} \, {\rm div}(\widetilde{R}) \,dx 
	-\frac{\lambda_1(\Omega)}{2}\int_{\Omega} |u|^{2} \, {\rm div}(\widetilde{R}) \,dx
	-\int_{\Omega} (Du,  D u \cdot D\widetilde{R}) \,dx
	\\
	&+
	\int_{\Omega} |Du|^2 \chi_2(DR) \,dx 
	- 
	\lambda_1(\Omega) \int_{\Omega} |u|^2 \chi_2(DR) \,dx 
	\\
	\notag
	&- \int_{\Omega} (Du, D u \cdot DR) \, {\rm div}(R) \,dx 
	+  \int_{\Omega} (Du, D u \cdot DR \cdot DR) \,dx
	\\
	&+ \int_{\Omega} (D u \cdot DR, D u \cdot DR) \,dx 
	-\frac { (Q[v])^2 } {\left< v,v \right>_0}
	\bigg],
	\end{align*}
	where $\left<v,v\right>_0$ is given by \eqref{eq:vv-eigen} and the functional $Q[v]$ is written as
	\begin{align}
	\notag
	Q[v] 
	= 
	&-\int_{\Omega} (D v, D u \cdot DR) \,dx 
	- \int_{\Omega} (D u,D v \cdot DR) \,dx 
	+ \int_{\Omega} (D u,D v) \, {\rm div}(R) \,dx 
	\\
	\label{eq:Q[v]-eigen}
	&- \lambda_1(\Omega)\int_{\Omega} u v \, {\rm div}(R) \,dx.
	\end{align}
\end{proposition}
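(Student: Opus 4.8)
The plan is to derive Proposition \ref{lem:eigen} directly from Proposition \ref{prop:eigen} by specializing to $p=2$, so that no fresh differentiation of the Rayleigh quotient $\nu(t)$ is required. The first task is to check that the hypotheses of Proposition \ref{prop:eigen} are met: the constraint $\langle u,v\rangle_0=0$ that is implicit in the general theory is automatic here by Remark \ref{rem:eigen-zero}, while the normalization $\langle v,v\rangle_0 = -Q[v]$ is assumed outright; the remaining requirement $\langle v,v\rangle_0>0$ will be supplied by the positivity argument below. Once Proposition \ref{prop:eigen} is applicable, I would substitute $p=2$ into its formula and simplify term by term.

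The substitution itself is mostly bookkeeping. Setting $p=2$ makes the weight $|Du|^{p-2}$ equal to $1$, so it disappears from every integrand, and it kills the term $(p-2)\int_\Omega |Du|^{p-4}(Du, Du\cdot DR)^2\,dx$ in $\ddot\nu(0)$, the analogous term in $Q[v]$, and the matching term in $\langle v,v\rangle_0$. The numerical prefactors collapse as $\tfrac{1}{p}\to\tfrac{1}{2}$, $\tfrac{2}{p}\to 1$, and $(p-1)\lambda_1(\Omega)\to\lambda_1(\Omega)$, which turns the general expression for $\langle v,v\rangle_0$ into \eqref{eq:vv-eigen} and the general $Q[v]$ into \eqref{eq:Q[v]-eigen}; the $\widetilde R$ line and the $\chi_2(DR)$ line transcribe directly. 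The one place that demands care is the block of second-order terms quadratic in $DR$, namely the $\mathrm{div}(R)$ integral, the $DR\cdot DR$ integral, and $\int_\Omega |Du\cdot DR|^2\,dx$: here, because $p=2$ restores the linear (strong) Euler--Lagrange equation $-\Delta u=\lambda_1(\Omega)u$ with $u=0$ on $\partial\Omega$, one can integrate by parts to recast this block in the reduced form appearing in Proposition \ref{lem:eigen}. I expect this reduction of the deformation block to be the main technical obstacle, since it is the only step that genuinely exploits the linear structure rather than mere substitution.

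The conceptually new ingredient is the positivity of $\langle v,v\rangle_0$, which is what allows Proposition \ref{lem:eigen} to weaken the hypothesis to $v\in\wol2(\Omega)\setminus\{\mathbb{R}u\}$ together with $\langle v,v\rangle_0 = -Q[v]$, dropping the explicit sign condition of Proposition \ref{prop:eigen}. For $p=2$ the form is $\langle v,v\rangle_0=\int_\Omega|Dv|^2\,dx-\lambda_1(\Omega)\int_\Omega v^2\,dx$ by \eqref{eq:vv-eigen}, so the variational characterization \eqref{eq:lambda1} of $\lambda_1(\Omega)$ gives $\langle v,v\rangle_0\geq 0$ for every $v$, in agreement with Lemma \ref{lem:vv>0} read through Remark \ref{rem:eigen-zero}. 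Since the first eigenvalue of the Laplacian is simple and its eigenspace is exactly $\mathbb{R}u$ (see \cite{Lind}), the infimum in \eqref{eq:lambda1} is attained only on $\mathbb{R}u$, so $\langle v,v\rangle_0>0$ strictly whenever $v\notin\mathbb{R}u$. Consequently $\langle v,v\rangle_0 = -Q[v]$ forces $-Q[v]>0$, the hypotheses of Proposition \ref{prop:eigen} hold, and combining its conclusion with the simplifications above yields the stated formula for $\ddot\nu(0)$.
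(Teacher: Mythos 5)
Your overall route coincides with the paper's: Proposition \ref{lem:eigen} is proved there by substituting $p=2$ into Proposition \ref{prop:eigen}, with the orthogonality $\langle u,v\rangle_0=0$ automatic by Remark \ref{rem:eigen-zero}, and with the strict positivity $\langle v,v\rangle_0>0$ for $v\in\wol2(\Omega)\setminus\{\mathbb{R}u\}$ supplied, exactly as you argue, by \eqref{eq:vv-eigen} together with the simplicity of $\lambda_1(\Omega)$ (uniqueness of the first eigenfunction up to scaling). That part of your proposal is correct and matches the paper's reasoning in content.

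The problem is the step you yourself flag as ``the main technical obstacle'': the claimed integration-by-parts reduction of the block quadratic in $DR$ using the strong equation $-\Delta u=\lambda_1(\Omega)u$. Direct substitution of $p=2$ into Proposition \ref{prop:eigen} produces $-2\int_\Omega (Du,Du\cdot DR)\,{\rm div}(R)\,dx + 2\int_\Omega (Du,Du\cdot DR\cdot DR)\,dx$, with coefficient $2$, whereas the statement of Proposition \ref{lem:eigen} prints coefficient $1$ on both terms; you postulate a reduction to bridge this, but do not carry it out, and in fact none exists. The difference between the two normalizations is $\int_\Omega \left(Du,\, Du\cdot\left(DR\cdot DR-{\rm div}(R)\,DR\right)\right)dx$, which by Lemma \ref{lema:two-dim} equals $-\int_\Omega |Du|^2\,{\rm det}(DR)\,dx$ when $N=2$, and this is not zero for a general field $R$; nor can the hypothesis $\dot\nu(0)=0$ rescue it, since that constraint is linear in $R$ while the discrepancy is quadratic (replace $R$ by $sR$: the constraint persists, the discrepancy scales by $s^2$). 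The paper's own proof performs no reduction whatsoever --- it is pure substitution plus the positivity remark --- which indicates that the coefficients $1$ in the printed statement are simply a slip for the coefficients $2$ inherited from Proposition \ref{prop:eigen}. Note the slip is invisible in the paper's application: for effectively one-dimensional fields one has $DR\cdot DR={\rm div}(R)\,DR$ by Lemma \ref{lem:one-dim}, so the two terms cancel for either choice of coefficient and \eqref{eq:first-eigenvalue-1} is unaffected. If you delete the invented reduction and simply substitute, your proof is complete and agrees with the paper's, modulo that typographical discrepancy in the statement itself.
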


Let us now discuss the proofs of Propositions \ref{prop:eigen-first} and \ref{prop:eigen}. 
Consider the following energy functional acting on $U_t$:
$$
\widetilde{m}(t) = E_t[U_t] = \frac{1}{p} \int_\Omega |D u_t \cdot \Psi_t|^p \varphi_t \,dx 
-
\frac{\nu(t)}{p} \int_\Omega |u_t|^p \varphi_t \,dx.
$$
By definition of $\nu(t)$, we have $\widetilde{m}(t) = 0$ for all $|t| < \delta$, which yields $\dot{\widetilde{m}}(t)= 0$ and $\ddot{\widetilde{m}}(t)=0$. 
The arguments of Sections \ref{sec:first-order}, \ref{sec:second_derivative}, and \ref{sec:special_nonhom} can be applied in much the same way to the functional $\widetilde{m}(t)$ by taking $\alpha_t=1$ and hence $\dot{\alpha}_t=0$ for all $t$.
Therefore, resolving $\dot{\widetilde{m}}(0)= 0$ with respect to $\dot{\nu}(0)$, we obtain Proposition \ref{prop:eigen-first}.
Under the assumption $\dot{\nu}(0) = 0$, the part of $\ddot{\widetilde{m}}(0)$ where the derivatives fall on the integral terms is exactly the same as in \eqref{eq:ddotm-plaplacian} of Proposition \ref{prop:lane-emden} with $p=q$.  
Thus, expressing $\ddot{\nu}(0)$ from the equation $\ddot{\widetilde{m}}(0)=0$, we derive Proposition \ref{prop:eigen}.

\section{Special cases of deformations}\label{sec:lower_dim}
In this section, we present some simplifications of the expressions for $\ddot{m}(0)$ given by Propositions \ref{thm:main} and \ref{prop:lane-emden}, and for $\ddot{\nu}(0)$ given by Propositions \ref{prop:eigen} and \ref{lem:eigen} under the additional assumption that $N=2$ or a vector field $R$ is effectively one-dimensional, i.e., $R = e \rho(x_1,\ldots, x_N)$, where $e$ is a constant vector and $\rho \in C^1(\mathbb{R}^N,\mathbb{R})$ is a scalar function.

\subsection{Effectively one-dimensional deformation}
\begin{lemma}\label{lem:one-dim}
Let $R = (\rho,0,\ldots, 0)^T$, where $\rho \in C^1(\mathbb{R}^N,\mathbb{R})$. Then $DR \cdot DR = {\rm div}(R)\, DR$ and $\chi_2(DR) = 0$.
\end{lemma}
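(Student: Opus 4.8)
The plan is to exploit the sparsity of the Jacobian $DR$ when $R$ has only one nonzero component. First I would record that, writing $R = (\rho,0,\ldots,0)^T$, the entries $(DR)_{ij} = \partial_{x_j} R_i$ vanish unless $i=1$, so that $DR$ has a single nonzero row, namely $(DR)_{1j} = \partial_{x_j}\rho$ for $j=1,\ldots,N$, while $(DR)_{ij}=0$ for all $i \geq 2$. In particular, ${\rm div}(R) = \sum_{i=1}^N (DR)_{ii} = (DR)_{11} = \partial_{x_1}\rho$, which is the scalar factor that will appear in the first identity.

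For the identity $DR \cdot DR = {\rm div}(R)\, DR$, I would compute the product entrywise via $(DR \cdot DR)_{ij} = \sum_{k=1}^N (DR)_{ik}(DR)_{kj}$. The factor $(DR)_{ik}$ forces $i=1$ and the factor $(DR)_{kj}$ forces $k=1$, so only the term $k=1$ in the first row survives, yielding $(DR\cdot DR)_{1j} = (DR)_{11}(DR)_{1j} = {\rm div}(R)\,(DR)_{1j}$, while all remaining entries of both sides (those in rows $i \geq 2$) vanish identically. Since the two matrices agree entrywise, this proves the first claim.

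For $\chi_2(DR)=0$, I would substitute the same sparse structure directly into the definition \eqref{eq:chi2}. In every summand the constraint $i<j$ forces $j \geq 2$, whence the diagonal factor $(DR)_{jj}=0$ kills the first product, and $(DR)_{ji}=0$ (again because $j \geq 2$) kills the second product. Thus each term of the sum vanishes and $\chi_2(DR)=0$.

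Since both statements reduce to the single observation that only the first row of $DR$ is nonzero, there is no genuine obstacle here; the only point requiring care is the bookkeeping of the index ranges, so as to confirm that all contributions outside the first row in the matrix product, and all cross terms in $\chi_2$, really do vanish.
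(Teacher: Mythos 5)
Your proof is correct and follows essentially the same route as the paper, which likewise observes that $DR$ has only the first row nonzero (with ${\rm div}(R)=\rho_{x_1}$) and reads off $DR\cdot DR = \rho_{x_1}\,DR$ from the explicit matrices. Your entrywise bookkeeping and the explicit term-by-term verification that $\chi_2(DR)=0$ are just slightly more detailed versions of what the paper leaves as a direct computation.
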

\begin{proof}
Obviously,
\begin{equation*}
DR = 
\begin{pmatrix}
\rho_{x_1} & \ldots & \rho_{x_N}\\
0 & \ldots & 0 \\
\vdots & \ddots & \vdots\\
0 & \ldots & 0
\end{pmatrix},
\quad
\text{and hence}
\quad
DR \cdot DR = 
\begin{pmatrix}
\rho_{x_1}^2 & \rho_{x_1} \rho_{x_2} & \ldots & \rho_{x_1} \rho_{x_N}\\
0 & 0 & \ldots & 0 \\
\vdots & \vdots & \ddots & \vdots\\
0 & 0 & \ldots & 0
\end{pmatrix}.
\end{equation*}
Then, observing that ${\rm div} (R) = \rho_{x_1}$, we have $DR \cdot DR = \rho_{x_1} DR = {\rm div}(R)\, DR$.
\end{proof}

Using Lemma \ref{lem:one-dim}, Proposition \ref{thm:main} can be simplified as follows. 
\begin{proposition}
	Assume that $\widetilde{R} = 0$. Then, under the assumptions of Proposition \ref{thm:main} and Lemma \ref{lem:one-dim}, there holds
	\begin{equation*}
	\ddot m(0) 
	=
	\int_{\Omega} (D^2_{zz} L(u, D u ),D u \cdot DR \otimes D u \cdot DR) \,dx 
	- \frac {(Q[u])^2}{\left< u,u \right>_0} 
	-\frac { (Q[v])^2 } {  \left< v,v \right>_0}.
	\end{equation*}
\end{proposition}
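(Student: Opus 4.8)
The plan is to obtain this proposition as an immediate specialization of Proposition \ref{thm:main}, whose formula \eqref{ddm_intermediate_three} already expresses $\ddot m(0)$ as a sum of six integral terms together with the two fractions $-\frac{(Q[u])^2}{\left< u,u \right>_0}$ and $-\frac{(Q[v])^2}{\left< v,v \right>_0}$. Since all the hypotheses of Proposition \ref{thm:main} are in force here, I would take \eqref{ddm_intermediate_three} as the starting point and simply track what each term becomes under the two additional assumptions $\widetilde R = 0$ and the structural identities of Lemma \ref{lem:one-dim}.

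First I would dispose of the terms that vanish outright. Because $\widetilde R = 0$, we have $D\widetilde R = 0$ and ${\rm div}(\widetilde R) = 0$, so the two $\widetilde R$-dependent integrals in \eqref{ddm_intermediate_three}, namely $\int_\Omega L(u,Du)\,{\rm div}(\widetilde R)\,dx$ and $-\int_\Omega (D_z L(u,Du), Du\cdot D\widetilde R)\,dx$, both drop out. Next, Lemma \ref{lem:one-dim} gives $\chi_2(DR) = 0$, which annihilates the term $2\int_\Omega L(u,Du)\,\chi_2(DR)\,dx$.

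The one genuinely algebraic step — and the only place where something must be verified rather than merely read off — is the cancellation of the two remaining integrals that are linear in $DR$ inside $(D_z L, \cdot)$. Using the first identity of Lemma \ref{lem:one-dim}, $DR\cdot DR = {\rm div}(R)\,DR$, I would rewrite inside the integrand
$$
2\int_\Omega (D_z L(u,Du), Du\cdot DR\cdot DR)\,dx
= 2\int_\Omega (D_z L(u,Du), Du\cdot DR)\,{\rm div}(R)\,dx,
$$
which is exactly the negative of the preceding term $-2\int_\Omega (D_z L(u,Du), Du\cdot DR)\,{\rm div}(R)\,dx$; the two therefore cancel. What survives is the single quadratic-form integral $\int_\Omega (D^2_{zz}L(u,Du), Du\cdot DR\otimes Du\cdot DR)\,dx$ together with the two unchanged fractions, which is precisely the asserted identity.

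I expect no real obstacle: the argument is pure bookkeeping once Lemma \ref{lem:one-dim} is available, the only subtlety being to apply the matrix identity $DR\cdot DR = {\rm div}(R)\,DR$ to the integrand before comparing the two middle terms. The hypotheses of Proposition \ref{thm:main} already guarantee $\left< u,u \right>_0 \neq 0$ and $\left< v,v \right>_0 > 0$, so the two fractions remain well defined and require no further comment.
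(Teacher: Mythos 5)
Your proposal is correct and coincides with the paper's own (implicit) argument: the paper states this proposition as an immediate simplification of formula \eqref{ddm_intermediate_three} of Proposition \ref{thm:main}, with the two $\widetilde R$-integrals and the $\chi_2(DR)$-term vanishing and the two middle terms cancelling via the pointwise identity $Du\cdot DR\cdot DR = {\rm div}(R)\,Du\cdot DR$ from Lemma \ref{lem:one-dim}, exactly as you did. Your remark that $\left< u,u\right>_0 \neq 0$ and $\left< v,v\right>_0 > 0$ are already part of the hypotheses of Proposition \ref{thm:main} is also accurate, so nothing further is required.
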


In the case of the Lane-Emden problem, we have the following result.
\begin{proposition}
	Assume that $\widetilde{R} = 0$. Then, under the assumptions of Proposition \ref{prop:lane-emden} and Lemma \ref{lem:one-dim}, there holds
	\begin{align*}
	\notag
	\ddot m(0) 
	&= 
	\int_{\Omega} |Du|^{p-2}(D u \cdot DR, D u \cdot DR) \,dx 
	\\
	&+ (p-2)\int_{\Omega} |Du|^{p-4}(D u, D u \cdot DR)^2 \,dx 
	+\frac{(q-p)}{q^2}
	\frac {\left(\int_{\Omega} |u|^{q} \, {\rm div}(R) \,dx\right)^2}{\int_{\Omega} |u|^{q} \,dx}
	-\frac { (Q[v])^2 } {  \left< v,v \right>_0}.
	\end{align*}
\end{proposition}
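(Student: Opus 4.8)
The plan is to specialize the formula \eqref{eq:ddotm-plaplacian} of Proposition \ref{prop:lane-emden} using the two hypotheses $\widetilde{R}=0$ and $R=(\rho,0,\ldots,0)^T$, the latter entering through Lemma \ref{lem:one-dim}. Since the assumptions of Proposition \ref{prop:lane-emden} are in force, the full expression for $\ddot m(0)$ is already available, and the entire argument reduces to tracking which terms survive the specialization.

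First I would set $\widetilde{R}=0$, so that ${\rm div}(\widetilde{R})=0$ and $D\widetilde{R}=0$. Consequently, the three integrals on the first line of \eqref{eq:ddotm-plaplacian} — the two carrying ${\rm div}(\widetilde{R})$ and the one carrying $D\widetilde{R}$ — vanish identically.

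Next I would invoke Lemma \ref{lem:one-dim}. Its first conclusion $\chi_2(DR)=0$ immediately removes the two $\chi_2(DR)$ integrals on the second line. Its second conclusion $DR\cdot DR={\rm div}(R)\,DR$ drives the key cancellation on the third line: since $Du\cdot DR\cdot DR={\rm div}(R)\,(Du\cdot DR)$, we obtain $(Du,\,Du\cdot DR\cdot DR)={\rm div}(R)\,(Du,\,Du\cdot DR)$, whence the term $2\int_\Omega |Du|^{p-2}(Du,\,Du\cdot DR\cdot DR)\,dx$ equals $2\int_\Omega |Du|^{p-2}(Du,\,Du\cdot DR)\,{\rm div}(R)\,dx$ and exactly offsets the preceding term $-2\int_\Omega |Du|^{p-2}(Du,\,Du\cdot DR)\,{\rm div}(R)\,dx$.

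After these cancellations, only the two integrals on the fourth line of \eqref{eq:ddotm-plaplacian}, together with the fraction $\frac{(q-p)}{q^2}\bigl(\int_\Omega |u|^q\,{\rm div}(R)\,dx\bigr)^2/\int_\Omega |u|^q\,dx$ and the corrector term $-(Q[v])^2/\left< v,v \right>_0$, remain, and these are precisely the terms appearing in the claimed identity. There is no genuine obstacle here: the statement is a direct algebraic specialization of Proposition \ref{prop:lane-emden}, and the only step warranting a moment's attention is the matrix identity $Du\cdot DR\cdot DR={\rm div}(R)\,(Du\cdot DR)$ supplied by Lemma \ref{lem:one-dim}, which produces the third-line cancellation.
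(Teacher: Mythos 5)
Your proposal is correct and matches the paper's own (implicit) derivation: the proposition is obtained exactly by substituting $\widetilde{R}=0$, $\chi_2(DR)=0$, and $DR\cdot DR={\rm div}(R)\,DR$ from Lemma \ref{lem:one-dim} into \eqref{eq:ddotm-plaplacian}, with the cancellation $2\int_\Omega |Du|^{p-2}(Du, Du\cdot DR\cdot DR)\,dx = 2\int_\Omega |Du|^{p-2}(Du, Du\cdot DR)\,{\rm div}(R)\,dx$ eliminating the third-line pair, just as you describe. No gaps.
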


Eigenvalue problems covered by Propositions \ref{prop:eigen} and \ref{lem:eigen} can be simplified as follows.
\begin{proposition}
	Assume that $\widetilde{R} = 0$. Then, under the assumptions of Proposition \ref{prop:eigen} and Lemma \ref{lem:one-dim}, there holds
	\begin{align*}
	\ddot{\nu}(0) 
	\notag
	= 
	\frac{p}{\int_{\Omega} |u|^p \,dx} 
	\bigg[
	&\int_{\Omega} |Du|^{p-2}(D u \cdot DR, D u \cdot DR) \,dx 
	\\
	&+ (p-2)\int_{\Omega} |Du|^{p-4}(D u, D u \cdot DR)^2 \,dx 
	-\frac { (Q[v])^2 } {\left< v,v \right>_0}
	\bigg].
	\end{align*}
	If, in addition, $p=2$ and the assumptions of Proposition \ref{lem:eigen} are satisfied, then 
	\begin{equation}\label{eq:first-eigenvalue-1}
	\ddot{\nu}(0) 
	= 
	\frac{2}{\int_{\Omega} |u|^2 \,dx} 
	\bigg[
	\int_{\Omega} (D u \cdot DR, D u \cdot DR) \,dx 
	-\frac { (Q[v])^2 } {\left< v,v \right>_0}
	\bigg].
	\end{equation}	
\end{proposition}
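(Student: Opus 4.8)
The plan is to obtain both displayed identities as a purely algebraic specialization of the general formula for $\ddot\nu(0)$ already established in Proposition \ref{prop:eigen}. No new analytic input is needed: I would simply take that formula verbatim and impose the two standing hypotheses of the present statement, namely $\widetilde R = 0$ together with the structural consequences $DR \cdot DR = {\rm div}(R)\, DR$ and $\chi_2(DR) = 0$ furnished by Lemma \ref{lem:one-dim}, and then verify that three groups of terms either vanish or cancel in pairs.

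First I would dispose of the first line of the formula in Proposition \ref{prop:eigen}: since $\widetilde R \equiv 0$ we have $D\widetilde R = 0$ and ${\rm div}(\widetilde R) = 0$, so the three integrals carrying $\widetilde R$ disappear. Next, Lemma \ref{lem:one-dim} gives $\chi_2(DR) = 0$, which removes the two integrals weighted by $\chi_2(DR)$ on the second line. The decisive step is the third cancellation: Lemma \ref{lem:one-dim} also yields $DR \cdot DR = {\rm div}(R)\, DR$, whence the integral $2\int_\Omega |Du|^{p-2}(Du, Du \cdot DR \cdot DR)\,dx$ equals $2\int_\Omega |Du|^{p-2}(Du, Du \cdot DR)\,{\rm div}(R)\,dx$, and this is exactly minus the neighbouring term $-2\int_\Omega |Du|^{p-2}(Du, Du \cdot DR)\,{\rm div}(R)\,dx$. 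After these reductions only the two quadratic-in-$DR$ integrals on the fourth line and the fraction $-(Q[v])^2/\left< v,v \right>_0$ remain, and keeping the common prefactor $p/\int_\Omega |u|^p\,dx$ gives the first asserted identity. The expressions for $Q[v]$ and $\left< v,v \right>_0$ are inherited unchanged from Proposition \ref{prop:eigen}.

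For the linear case I would then set $p=2$ directly in the formula just derived: the factor $(p-2)$ annihilates the second surviving integral, while $|Du|^{p-2}$ reduces to $1$ in the first, and the prefactor becomes $2/\int_\Omega |u|^2\,dx$, which is precisely \eqref{eq:first-eigenvalue-1}. Under the hypotheses of Proposition \ref{lem:eigen} the quantities $Q[v]$ and $\left< v,v \right>_0$ take their $p=2$ forms, so nothing further need be computed. Because every manipulation is a substitution or a sign-matching cancellation, there is no genuine obstacle here; the single point demanding care is the bookkeeping in the third cancellation, where one must read off correctly that $DR \cdot DR = {\rm div}(R)\, DR$ converts the $DR\cdot DR$ integral into a ${\rm div}(R)$-weighted integral of equal magnitude and opposite sign to its neighbour.
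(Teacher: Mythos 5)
Your proposal is correct and follows essentially the same route as the paper: the paper obtains this proposition by substituting $\widetilde R = 0$ together with the identities $DR \cdot DR = {\rm div}(R)\, DR$ and $\chi_2(DR) = 0$ from Lemma \ref{lem:one-dim} into the formula of Proposition \ref{prop:eigen}, with exactly the pairwise cancellation of the ${\rm div}(R)$-weighted term against the $DR \cdot DR$ term that you identify as the decisive step. Setting $p=2$ to kill the $(p-2)$-weighted integral and reduce $|Du|^{p-2}$ to $1$ then yields \eqref{eq:first-eigenvalue-1} precisely as you describe.
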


\subsection{Two-dimensional case}
\begin{lemma}\label{lema:two-dim}
Let $N=2$. Then $DR \cdot DR - {\rm div}(R) DR = - {\det}(DR) I$ and $\chi_2(DR) = {\rm det}(DR)$, where $I$ is the identity matrix.
\end{lemma}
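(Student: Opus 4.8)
The plan is to recognize both identities as elementary consequences of $2\times 2$ linear algebra, treating $M := DR$ throughout as an arbitrary $2\times 2$ matrix with entries $M_{ij}$, and invoking nothing more than the trace, the determinant, and the Cayley--Hamilton theorem.

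First I would dispose of the second, simpler identity. By the definition \eqref{eq:chi2}, for $N=2$ the sum over $i<j$ collapses to the single term $i=1$, $j=2$, so that
\begin{equation*}
\chi_2(DR) = M_{11} M_{22} - M_{12} M_{21} = {\det}(DR),
\end{equation*}
which is precisely the $2\times 2$ determinant. No further work is needed here.

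For the matrix identity, the key observation is that ${\rm div}(R)$ is exactly the trace of $M = DR$, since ${\rm div}(R) = \sum_{i} (DR)_{ii}$. The characteristic polynomial of a $2\times 2$ matrix is $\lambda^2 - {\rm tr}(M)\,\lambda + {\det}(M)$, and the Cayley--Hamilton theorem applied to $M$ gives $M^2 - {\rm tr}(M)\,M + {\det}(M)\,I = 0$. Rearranging and substituting ${\rm tr}(M) = {\rm div}(R)$ and $M^2 = DR \cdot DR$ yields
\begin{equation*}
DR \cdot DR - {\rm div}(R)\, DR = -{\det}(DR)\, I,
\end{equation*}
as claimed.

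The only genuine obstacle, if one can call it that, is purely a matter of bookkeeping, so I would be willing to replace the Cayley--Hamilton appeal by a direct entrywise verification to keep the argument self-contained: one writes out the four entries of $DR \cdot DR$ and of ${\rm div}(R)\, DR$ in terms of the $M_{ij}$, subtracts, and checks that the off-diagonal entries cancel while each diagonal entry equals $-(M_{11}M_{22}-M_{12}M_{21}) = -{\det}(DR)$. Either route is short; since the statement is finite-dimensional and algebraic, there is no analytic difficulty, and the proof amounts to correctly identifying ${\rm div}(R)$ with the trace and performing the $2\times 2$ matrix multiplication.
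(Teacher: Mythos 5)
Your proof is correct. The paper itself offers no detail at all --- its proof of this lemma is the single sentence that ``both equalities can be proved by direct calculations,'' i.e.\ the entrywise verification you describe as your fallback. Your primary route through the Cayley--Hamilton theorem is therefore a genuinely different (and more conceptual) argument: once you note that ${\rm div}(R) = {\rm tr}(DR)$, the identity $DR \cdot DR - {\rm div}(R)\,DR = -\det(DR)\,I$ is exactly the Cayley--Hamilton relation $M^2 - {\rm tr}(M)\,M + \det(M)\,I = 0$ for $M = DR$, and the identity $\chi_2(DR) = \det(DR)$ is immediate since for $N=2$ the sum in \eqref{eq:chi2} has the single term $(i,j)=(1,2)$. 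What your approach buys is an explanation of \emph{why} the identity holds and how it fits the paper's own remark that $\chi_2(M)$ is the third-to-highest coefficient of the characteristic polynomial of $M$: for general $N$, Cayley--Hamilton expresses $M^N$ through lower powers with coefficients $\pm{\rm tr}(M), \pm\chi_2(M), \ldots, \pm\det(M)$, so your argument makes transparent that the lemma is the $N=2$ specialization of a structural fact, whereas the brute-force computation is a one-off check. The only caution is that Cayley--Hamilton in this clean two-term form is special to $N=2$ (for $N \geq 3$ the relation involves $M^2$ and higher powers), so the matrix identity itself, unlike your reasoning template, does not extend verbatim --- which is consistent with the paper treating the planar case separately.
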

\begin{proof}
Both equalities can be proved by direct calculations.
\end{proof}

By means of Lemma \ref{lema:two-dim}, we have the following result on the simplification of Proposition \ref{thm:main}.
\begin{proposition}
	Assume that $\widetilde{R} = 0$. Then, under the assumptions of Proposition \ref{thm:main} and Lemma \ref{lema:two-dim}, there holds
	\begin{align}
	\notag
	\ddot m(0) 
	=
	&-2 \int_{\Omega} (D_z L(u, D u ), D u) {\rm det}(DR) \,dx 
	+
	2 \int_{\Omega} L(u, D u ) {\rm det}(DR) \,dx
	\\
	\notag
	&
	+ \int_{\Omega} (D^2_{zz} L(u, D u ),D u \cdot DR \otimes D u \cdot DR) \,dx 
	- \frac {(Q[u])^2}{\left< u,u \right>_0} 
	-\frac { (Q[v])^2 } {  \left< v,v \right>_0}.
	\end{align}
\end{proposition}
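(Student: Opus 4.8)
The plan is to specialize the general formula \eqref{ddm_intermediate_three} of Proposition \ref{thm:main} to the planar setting $N=2$ with $\widetilde{R}=0$, using the two algebraic identities supplied by Lemma \ref{lema:two-dim}. First I would set $\widetilde{R}=0$ in \eqref{ddm_intermediate_three}. Since then ${\rm div}(\widetilde{R})=0$ and $D\widetilde{R}=0$, the two integrals in \eqref{ddm_intermediate_three} carrying $\widetilde{R}$ vanish identically, whereas the Hessian term $\int_{\Omega}(D^2_{zz}L(u,Du),Du\cdot DR\otimes Du\cdot DR)\,dx$ and the two fractions $-\frac{(Q[u])^2}{\left<u,u\right>_0}$ and $-\frac{(Q[v])^2}{\left<v,v\right>_0}$ survive unchanged and already appear in the desired form.

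The key step is to merge the two remaining first-order deformation terms. Using associativity of the matrix products in the factor $Du\cdot DR\cdot DR$ and linearity of the contraction, I would write
$$
-2\int_{\Omega}(D_z L(u,Du),Du\cdot DR)\,{\rm div}(R)\,dx
+2\int_{\Omega}(D_z L(u,Du),Du\cdot DR\cdot DR)\,dx
=
2\int_{\Omega}\bigl(D_z L(u,Du),Du\cdot(DR\cdot DR-{\rm div}(R)\,DR)\bigr)\,dx.
$$
By the first identity of Lemma \ref{lema:two-dim}, in dimension $N=2$ one has $DR\cdot DR-{\rm div}(R)\,DR=-\det(DR)\,I$, so the inner vector equals $-\det(DR)\,Du$ and the combined term collapses to $-2\int_{\Omega}(D_z L(u,Du),Du)\,\det(DR)\,dx$, which is precisely the first integral in the claimed formula.

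It then remains to rewrite $2\int_{\Omega}L(u,Du)\,\chi_2(DR)\,dx$ via the second identity $\chi_2(DR)=\det(DR)$ of Lemma \ref{lema:two-dim}, producing $2\int_{\Omega}L(u,Du)\,\det(DR)\,dx$. Collecting this term together with the unchanged Hessian integral and the two fractions yields exactly the asserted expression for $\ddot m(0)$. I do not expect a genuine obstacle here, since the statement is a direct algebraic specialization of \eqref{ddm_intermediate_three}; the only point demanding care is the correct handling of the row-vector/matrix contractions $Du\cdot DR$ and $Du\cdot DR\cdot DR$ when factoring out $\det(DR)\,I$, which is where the planar identity of Lemma \ref{lema:two-dim} is essential.
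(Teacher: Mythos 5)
Your proposal is correct and matches the paper's own argument: the proposition is obtained by setting $\widetilde{R}=0$ in \eqref{ddm_intermediate_three} and substituting the two identities of Lemma \ref{lema:two-dim}, exactly as you do, with the grouping $DR \cdot DR - {\rm div}(R)\, DR = -\det(DR)\, I$ collapsing the two first-order deformation integrals and $\chi_2(DR)=\det(DR)$ handling the remaining one. No gaps; this is precisely the direct algebraic specialization the paper intends.
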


The Lane-Emden problem covered by Proposition \ref{prop:lane-emden} can be simplified as follows.
\begin{proposition}
	Assume that $\widetilde{R} = 0$. Then, under the assumptions of Proposition \ref{prop:lane-emden} and Lemma \ref{lema:two-dim}, there holds
	\begin{align*}
	\notag
	\ddot m(0) 
	= 
	&-\frac{2(p-1)}{p}\int_\Omega |Du|^p {\rm det}(DR) \,dx - \frac{2}{q} \int_\Omega |u|^q {\rm det}(DR) \,dx\\
	&+\int_{\Omega} |Du|^{p-2}(D u \cdot DR, D u \cdot DR) \,dx 
	+ (p-2)\int_{\Omega} |Du|^{p-4}(D u, D u \cdot DR)^2 \,dx 
	\\
	\notag
	&+
	\frac{(q-p)}{q^2}\frac {\left(\int_{\Omega} |u|^{q} \,{\rm div}(R) \,dx\right)^2}{\int_{\Omega} |u|^{q} \,dx}
	-\frac { (Q[v])^2 } {\left< v,v \right>_0}.
	\end{align*}
\end{proposition}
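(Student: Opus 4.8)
The plan is to derive the asserted formula directly from equation~\eqref{eq:ddotm-plaplacian} of Proposition~\ref{prop:lane-emden} by feeding in the two structural hypotheses of this statement, namely $\widetilde R = 0$ and $N=2$ (through Lemma~\ref{lema:two-dim}). No new analytic input is needed; the work is entirely a substitution followed by one cancellation.

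First I would set $\widetilde R = 0$. This annihilates the entire first line of \eqref{eq:ddotm-plaplacian}, since each of its three terms carries a factor ${\rm div}(\widetilde R)$ or $D\widetilde R$. What survives are the two $\chi_2(DR)$ terms, the pair of terms weighted by ${\rm div}(R)$ and by $DR\cdot DR$, the two terms quadratic in $DR$, namely $\int_{\Omega} |Du|^{p-2}(Du\cdot DR, Du\cdot DR)\,dx$ and $(p-2)\int_{\Omega} |Du|^{p-4}(Du, Du\cdot DR)^2\,dx$, and finally the two fractions involving $Q[u]$ (already rewritten under $\dot m(0)=0$ as the $|u|^q$-quotient) and $Q[v]$.

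Next I would invoke Lemma~\ref{lema:two-dim}. Its first identity $\chi_2(DR)={\rm det}(DR)$ immediately converts the $\chi_2$ terms into $\frac{2}{p}\int_{\Omega}|Du|^p{\rm det}(DR)\,dx-\frac{2}{q}\int_{\Omega}|u|^q{\rm det}(DR)\,dx$. Its second identity, rearranged as $DR\cdot DR={\rm div}(R)\,DR-{\rm det}(DR)\,I$, is the tool for the mixed term: contracting on the left by $Du$ and pairing with $Du$ gives $(Du, Du\cdot DR\cdot DR)={\rm div}(R)\,(Du, Du\cdot DR)-{\rm det}(DR)\,|Du|^2$. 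The step I expect to need the most care is precisely this contraction, since one must track that $Du\cdot DR$ is a row vector acted on from the right by $DR$ before being paired with $Du$; placing the scalar factors ${\rm div}(R)$ and ${\rm det}(DR)|Du|^2$ correctly is the only nonroutine bookkeeping in the argument.

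Finally I would exploit the resulting cancellation. Substituting the identity above into $2\int_{\Omega}|Du|^{p-2}(Du, Du\cdot DR\cdot DR)\,dx$ produces a term $+2\int_{\Omega}|Du|^{p-2}{\rm div}(R)(Du, Du\cdot DR)\,dx$ that exactly cancels the surviving term $-2\int_{\Omega}|Du|^{p-2}(Du, Du\cdot DR)\,{\rm div}(R)\,dx$, leaving only $-2\int_{\Omega}|Du|^p{\rm det}(DR)\,dx$. Adding this to the $\frac{2}{p}\int_{\Omega}|Du|^p{\rm det}(DR)\,dx$ term collects the coefficient $\frac{2}{p}-2=-\frac{2(p-1)}{p}$, which reproduces the first line of the claimed formula. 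The two quadratic-in-$DR$ integrals and the two final fractions are carried over verbatim, and collecting all surviving integrals completes the proof.
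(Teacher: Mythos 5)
Your proposal is correct and follows essentially the same route as the paper: Section \ref{sec:lower_dim} obtains this proposition precisely by substituting $\widetilde R=0$ and the two identities of Lemma \ref{lema:two-dim} into \eqref{eq:ddotm-plaplacian} (equivalently, by specializing the general two-dimensional formula to $L(s,z)=\frac1p|z|^p-\frac1q|s|^q$), with the rearranged identity $DR\cdot DR={\rm div}(R)\,DR-\det(DR)\,I$ producing exactly the cancellation and the coefficient $\frac2p-2=-\frac{2(p-1)}{p}$ that you identify.
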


Finally, in the case of the $p$-Laplacian eigenvalue problem, we have the following result.
\begin{proposition}
	Assume that $\widetilde{R} = 0$. Then, under the assumptions of Proposition \ref{prop:eigen} and Lemma \ref{lema:two-dim}, there holds
	\begin{align*}
	\ddot{\nu}(0) 
	\notag
	= 
	\frac{p}{\int_{\Omega} |u|^p \,dx} 
	\bigg[
	&-\frac{2(p-1)}{p}\int_\Omega |Du|^p {\rm det}(DR) \,dx - \frac{2\lambda_1(\Omega)}{p} \int_\Omega |u|^p {\rm det}(DR) \,dx\\
	&+\int_{\Omega} |Du|^{p-2}(D u \cdot DR, D u \cdot DR) \,dx 
	+ (p-2)\int_{\Omega} |Du|^{p-4}(D u, D u \cdot DR)^2 \,dx 
	\\
	&-\frac { (Q[v])^2 } {\left< v,v \right>_0}
	\bigg].
	\end{align*}
\end{proposition}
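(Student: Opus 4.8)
The plan is to take the expression for $\ddot{\nu}(0)$ furnished by Proposition~\ref{prop:eigen} and reduce it termwise using the two standing hypotheses $\widetilde R = 0$ and $N = 2$. First I would set $\widetilde R = 0$, which annihilates the three integrals in the first line of the bracket in Proposition~\ref{prop:eigen}: the two terms carrying ${\rm div}(\widetilde R)$ and the term $\int_\Omega |Du|^{p-2}(Du, Du \cdot D\widetilde R)\,dx$ all vanish. What survives is the contribution of the deformation--deformation terms built solely from $DR$, together with the corrector fraction $-(Q[v])^2/\left< v,v \right>_0$.

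Next I would invoke Lemma~\ref{lema:two-dim}. Its first assertion $\chi_2(DR) = {\rm det}(DR)$ lets me rewrite the pair
$$\frac{2}{p}\int_\Omega |Du|^p \chi_2(DR)\,dx - \frac{2\lambda_1(\Omega)}{p}\int_\Omega |u|^p \chi_2(DR)\,dx$$
as the same integrals with ${\rm det}(DR)$ in place of $\chi_2(DR)$. The key step is then to merge
$$-2\int_\Omega |Du|^{p-2}(Du, Du \cdot DR)\,{\rm div}(R)\,dx + 2\int_\Omega |Du|^{p-2}(Du, Du \cdot DR \cdot DR)\,dx.$$
Factoring out the common $2|Du|^{p-2}(Du, Du \cdot (\,\cdot\,))$ and using the second assertion of Lemma~\ref{lema:two-dim}, namely $DR \cdot DR - {\rm div}(R)\,DR = -{\rm det}(DR)\,I$, the bracketed matrix applied to $Du$ collapses to $-{\rm det}(DR)\,Du$, so that this pair equals $-2\int_\Omega |Du|^{p-2}{\rm det}(DR)\,|Du|^2\,dx = -2\int_\Omega |Du|^p {\rm det}(DR)\,dx$.

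Finally I would collect the two surviving multiples of $\int_\Omega |Du|^p {\rm det}(DR)\,dx$, namely $\frac{2}{p}$ from the $\chi_2$ term and $-2$ from the merged pair, giving the coefficient $\frac{2}{p} - 2 = -\frac{2(p-1)}{p}$; the $|u|^p$ term retains its coefficient $-\frac{2\lambda_1(\Omega)}{p}$, while the two terms $\int_\Omega |Du|^{p-2}(Du \cdot DR, Du \cdot DR)\,dx$ and $(p-2)\int_\Omega |Du|^{p-4}(Du, Du \cdot DR)^2\,dx$, as well as the corrector fraction, are left untouched. Reassembling these reproduces exactly the claimed expression. The whole argument is a direct algebraic simplification with no analytic obstacle; the only point demanding care is that the matrix identity of Lemma~\ref{lema:two-dim} must be applied to the combination $Du \cdot DR \cdot DR - {\rm div}(R)(Du \cdot DR)$ as a whole, rather than to the two summands separately, since it is precisely this difference that collapses under the $N=2$ hypothesis.
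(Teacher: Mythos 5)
Your proposal is correct and follows exactly the route the paper intends: setting $\widetilde R = 0$ to kill the first line of Proposition \ref{prop:eigen}, replacing $\chi_2(DR)$ by ${\rm det}(DR)$, and collapsing the pair of ${\rm div}(R)$- and $DR\cdot DR$-terms via the identity $DR \cdot DR - {\rm div}(R)\,DR = -{\rm det}(DR)\,I$ of Lemma \ref{lema:two-dim}, which yields the coefficient $\frac{2}{p}-2 = -\frac{2(p-1)}{p}$. The paper leaves this as a direct substitution after Lemma \ref{lema:two-dim}, and your bookkeeping (including the caveat that the matrix identity must be applied to the combined difference) matches it in every detail.
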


\section{Applications}\label{sec:applications}
In this section, we provide an application of our results to the stability of the first eigenvalue $\lambda_1(\Omega)$ of the Laplacian in the rectangle $\Omega = (0,1) \times (-a,a)$ with $a \geq 1$ under perturbations of the form $\Phi_t(x,y) = (x,y)^T + tR(x,y)$, where
$R(x,y) = (f(x) \theta(y),0)^T$ and $f,\theta$ are $C^1$-smooth, see, e.g., Figures \ref{fig:theta=y} and \ref{fig:theta=siny} below. 
That is, such $R$ satisfies Lemma \ref{lem:one-dim}. 
In some cases, we are able to find explicitly an optimal corrector, which gives us a possibility to compare $\ddot{\nu}(0)$ computed for optimal and several nonoptimal correctors. 

First, we note that in view of the separable nature of our domain we have
$$
\lambda_1(\Omega) = \pi^2 + \left(\frac{\pi}{2a}\right)^2
\quad \text{and} \quad
u = \sin(\pi x) \cos \left(\frac{\pi y}{2a}\right).
$$
In order to use the formula \eqref{eq:first-eigenvalue-1} for $\ddot{\nu}(0)$, we have to require  $\dot{\nu}(0) = 0$. Applying the Pohozaev identity (cf.\ Remark \ref{rem:eigen}), we see that 
$$
\dot{\nu}(0) = -\frac{1}{\int_\Omega |u|^2 \,dx} \int_{\partial \Omega} |Du|^2 (R,n) \, d\sigma
=
-\frac{\pi^2 (f(1)-f(0))}{\int_\Omega |u|^2 \,dx} \int_{-a}^{a} \left|\cos\left(\frac{\pi y}{2a}\right)\right|^2 \theta(y) \,dy.
$$
Therefore, if either $f(0)=f(1)$ or $\theta(y)$ is odd, then $\dot{\nu}(0) = 0$. Below, we will work with the case of odd  $\theta(y)$.

For the considered $R$, we have
\begin{align*}
\text{div}(R) = f'(x) \theta(y)
\quad \text{and} \quad 
DR = 
\left(
\begin{matrix}
f'(x) \theta(y)	& f(x)\theta'(y) \\
0				& 0
\end{matrix}
\right).
\end{align*}
Thus,
\begin{align*}
\int_{\Omega} \left(D u \cdot DR, D u \cdot DR\right) \,dx
&=
\int_{\Omega} u_x^2 (f' \theta)^2 \,dx + 
\int_{\Omega} u_x^2 (f \theta')^2 \,dx,
\end{align*}
and  we conclude from \eqref{eq:first-eigenvalue-1} that
\begin{equation}\label{eq:m0fin1}
\ddot{\nu}(0) 
= 
\frac{2}{\int_{\Omega} |u|^2 \,dx} 
\left[
\int_{\Omega} u_x^2 (f' \theta)^2 \,dx + 
\int_{\Omega} u_x^2 (f \theta')^2 \,dx
- \frac{(Q[v])^2}{\left<v,v\right>_0}
\right],
\end{equation}
where a corrector $v \in \wol2(\Omega) \setminus \{\mathbb{R} u\}$ satisfies $\left<v, v\right>_0 = -Q[v]$, and $Q[v]$ and $\left<v, v\right>_0$ are given by \eqref{eq:Q[v]-eigen} and \eqref{eq:vv-eigen}, respectively, i.e.,
\begin{equation}\label{eq:num_Q}
Q[v] 
= 
-\int_\Omega u_x v_x f' \theta \,dx
-\int_\Omega u_x v_y f \theta' \,dx
-\int_\Omega u_y v_x f \theta' \,dx
+\int_\Omega u_y v_y f' \theta \,dx
-\lambda_1(\Omega) \int_\Omega u v f' \theta \,dx,
\end{equation}
\begin{equation}\label{eq:num_vv}
\left<v, v\right>_0 
=
\int_\Omega |Dv|^2 \,dx - \lambda_1(\Omega) \int_\Omega |v|^2 \,dx.
\end{equation}

Let us consider the problem of finding the optimal corrector (see Section \ref{sec:optimal-bvp}):
\begin{equation*}
\lambda = \sup\left\{\frac{(Q[v])^2}{\left<v,v\right>_0}:~ v \in \wol2(\Omega) \setminus \{0\},~ \left<u,v\right>_0 = 0 
\right\}.
\end{equation*}
Note that the constraint $\left<u,v\right>_0 = 0$ is fulfilled for any $v \in  \wol2(\Omega)$ due to Remark \ref{rem:eigen-zero}.
Moreover, since $u,f,\theta$ are regular, we easily see that the Rayleigh quotient $\frac{(Q[v])^2}{\left<v,v\right>_0}$ is weakly upper semicontinuous and
$$
\frac{(Q[v])^2}{\left<v,v\right>_0} \leq C \frac{\int_\Omega |Dv|^2 \,dx}{\int_\Omega |Dv|^2 \,dx - \lambda_1(\Omega) \int_\Omega |v|^2 \,dx},
$$
where $C=C(u,f,\theta)$ does not depend on $v$. 
Let $\{v_n\}$ be a maximizing sequence for $\lambda$. We can always assume $Q[v_n]=1$ for all $n \in \mathbb{N}$.
Moreover, $\{v_n\}$ does not converge to $u$ weakly in $\wol2(\Omega)$ since $Q[u]=0$ by $\dot{\nu}(0)=0$ and Proposition \ref{prop:eigen-first} Therefore, we conclude that there exists a maximizer $v$ of $\lambda$, and $Q[v]=1$.
We see from Lemma \ref{lem:optimal} that $v$ satisfies
\begin{equation}\label{eq:w-app}
\left<v, h\right>_0 = \lambda^{-1} Q[h]
~\text{ for all } ~
h \in \wol2(\Omega).
\end{equation}
Making the substitution $w = -\lambda v$, we conclude that, for the optimal corrector $w$,
\begin{equation*}
\ddot{\nu}(0) 
= 
\frac{2}{\int_{\Omega} |u|^2 \,dx} 
\left[
\int_{\Omega} u_x^2 (f' \theta)^2 \,dx + 
\int_{\Omega} u_x^2 (f \theta')^2 \,dx
- \left<w,w\right>_0
\right].
\end{equation*}
Performing an integration by parts in \eqref{eq:w-app}, we see that the optimal corrector $w$ is the solution of the following boundary value problem:
\begin{equation}\label{eq:w}
\left\{
\begin{aligned}
-\Delta w - \lambda_1 w 
&= 
2u_{xx} f' \theta + 2 u_{xy} f \theta' + u_x(f''\theta + f\theta'') \quad \text{in } \Omega,\\
w&=0 \quad \text{on } \partial \Omega.
\end{aligned}
\right.
\end{equation}
The solution $w$ of \eqref{eq:w} can be expressed via the Fourier series as
$$
w = \sum_{(m,k)\neq(1,1)} v_{m,k} \varphi_{m,k},
$$
where $m,k \in \mathbb{N}$, and 
$$
v_{m,k} = \frac{1}{\lambda_{m,k} - \lambda_{1,1}} \int_\Omega \left(2u_{xx} f' \theta + 2 u_{xy} f \theta' + u_x(f''\theta + f\theta'')\right) \varphi_{m,k} \,dx.
$$
Here, the eigenvalues $\lambda_{m,k}$ and the eigenfunctions $\varphi_{m,k}$ are given, respectively, by
$$
\lambda_{m,k} = m^2 \pi^2 + \left(\frac{k \pi}{2a}\right)^2
\quad \text{and} \quad
\varphi_{m,k} 
=
\left\{
\begin{aligned}
&\sqrt{\frac{2}{a}}\sin\left(m \pi x\right) \cos\left(\frac{k \pi y}{2a}\right) \quad \text{for } k \text{ odd},\\
&\sqrt{\frac{2}{a}}\sin\left(m \pi x\right) \sin\left(\frac{k \pi y}{2a}\right)  \quad \text{for } k \text{ even}.
\end{aligned}
\right.
$$
Hence, in view of \eqref{eq:w}, we have
\begin{align}
\notag
\left<w,w\right>_0 
&= 
\int_\Omega \left(2u_{xx} f' \theta + 2 u_{xy} f \theta' + u_x(f''\theta + f\theta'')\right) w \,dx
\\ 
\label{eq:num_ww}
&= 
\sum_{(m,k)\neq(1,1)} \frac{1}{\lambda_{m,k} - \lambda_{1,1}} 
\left(
\int_\Omega \left(2u_{xx} f' \theta + 2 u_{xy} f \theta' + u_x(f''\theta + f\theta'')\right) \varphi_{m,k} \,dx
\right)^2.
\end{align}

Now we are ready to consider several explicit examples of the perturbation $R$. We will treat the following six cases (see Figures \ref{fig:theta=y} and \ref{fig:theta=siny}):
\begin{enumerate}[label={\rm(\roman*)}]
	\item\label{case:sinxy} $f(x)=\sin\left(\frac{\pi x}{2}\right)$ and $\theta(y)=y$;
	\item\label{case:xy} $f(x)=x$ and $\theta(y)=y$;
	\item\label{case:cosxy} $f(x)=1-\cos\left(\frac{\pi x}{2}\right)$ and $\theta(y)=y$;
	\item\label{case:sinxsiny} $f(x)=\sin\left(\frac{\pi x}{2}\right)$ and $\theta(y)=\sin\left(\frac{\pi y}{2a}\right)$;
	\item\label{case:xsiny} $f(x)=x$ and $\theta(y)=\sin\left(\frac{\pi y}{2a}\right)$;
	\item\label{case:cosxsiny} $f(x)=1-\cos\left(\frac{\pi x}{2}\right)$ and $\theta(y)=\sin\left(\frac{\pi y}{2a}\right)$.
\end{enumerate}

For all cases \ref{case:sinxy}-\ref{case:cosxsiny} we consider nonoptimal correctors $v = yu$ and $v = \varphi_{1,2}$, as well as several approximations of the optimal corrector $w$:
$$
w_{M,K} = \sum_{(m,k)\neq(1,1), m \leq M, k \leq K} v_{m,k} \varphi_{m,k}.
$$
Moreover, in the cases \ref{case:sinxsiny} and \ref{case:xsiny}, we obtain an analytic expression for the sum in $\left<w,w\right>_0$ and hence compute $\ddot{\nu}(0)$ for the optimal corrector.
In the cases \ref{case:sinxy}-\ref{case:cosxy}, \ref{case:cosxsiny},  we are not able to obtain an analytic expression for the sum in $\left<w,w\right>_0$, that is why only approximations of $w$ will be considered. 
Notice that each integral in \eqref{eq:m0fin1}, \eqref{eq:num_Q}, \eqref{eq:num_vv} can be easily calculated analytically for such choices of $v$ (although the resulting expressions are relatively huge).
We omit trivial calculations and only discuss the behaviour of $\ddot{\nu}(0)$ with respect to $a \geq 1$.

First, we consider the cases \ref{case:sinxy}-\ref{case:cosxy}. The behaviour of $\ddot{\nu}(0)$ with respect to $a \in (1,1.1)$ is depicted on Figures \ref{fig:sinxy}, \ref{fig:xy}, and \ref{fig:cosxy}.
We observe that $\ddot{\nu}(0) < 0$ for all sufficiently large $a > 1$, which implies that $\lambda(\Omega_t)$ decays locally with respect to $t$ for such values of $a$.
In particular, in each case, $\ddot{\nu}(0) < 0$ for all $a \geq 1.01$ when $v=w_{4,6}$.
Note that we do not expect $\ddot{\nu}(0) = 0$ at $a=1$, since in this case the shape of the right boundary of the deformed domain does not coincide with the shape of the nodal line of any second eigenfunction in the square $(0,2)\times(-1,1)$.
That is, if $a=1$, then $\lambda_1(\Omega_t)$ has to increase with respect to $t$.
Note also that $f(x)=1-\cos\left(\frac{\pi x}{2}\right)$ gives better values for $\ddot{\nu}(0)$ for, at least, $v=yu$ and $v=\varphi_{1,2}$. This can be explained by the fact that the mass of $u$ near the left boundary $x=0$ changes slower with respect to this deformation than with respect to $f(x)=\sin\left(\frac{\pi x}{2}\right)$ and $f(x)=x$.

\begin{figure}[!h]
	\centering
	\begin{minipage}[t]{0.49\linewidth}
		\centering
		\includegraphics[width=0.35\linewidth]{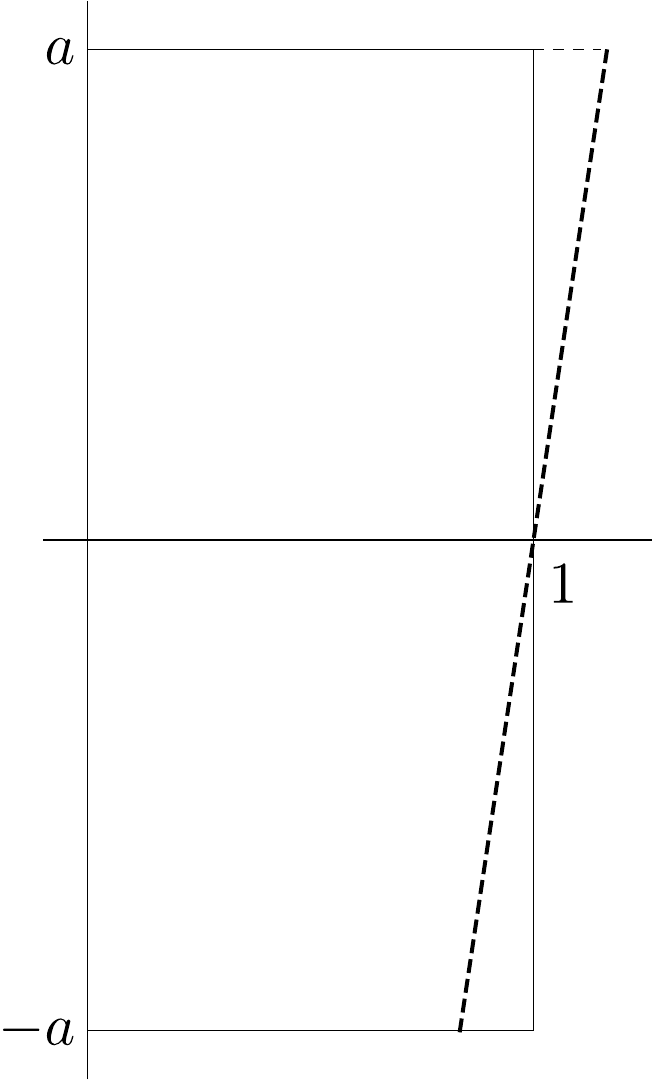}
		\caption{Perturbation by $\theta(y)=y$}
		\label{fig:theta=y}
	\end{minipage} 	
	\begin{minipage}[t]{0.49\linewidth}
		\centering
		\includegraphics[width=0.95\linewidth]{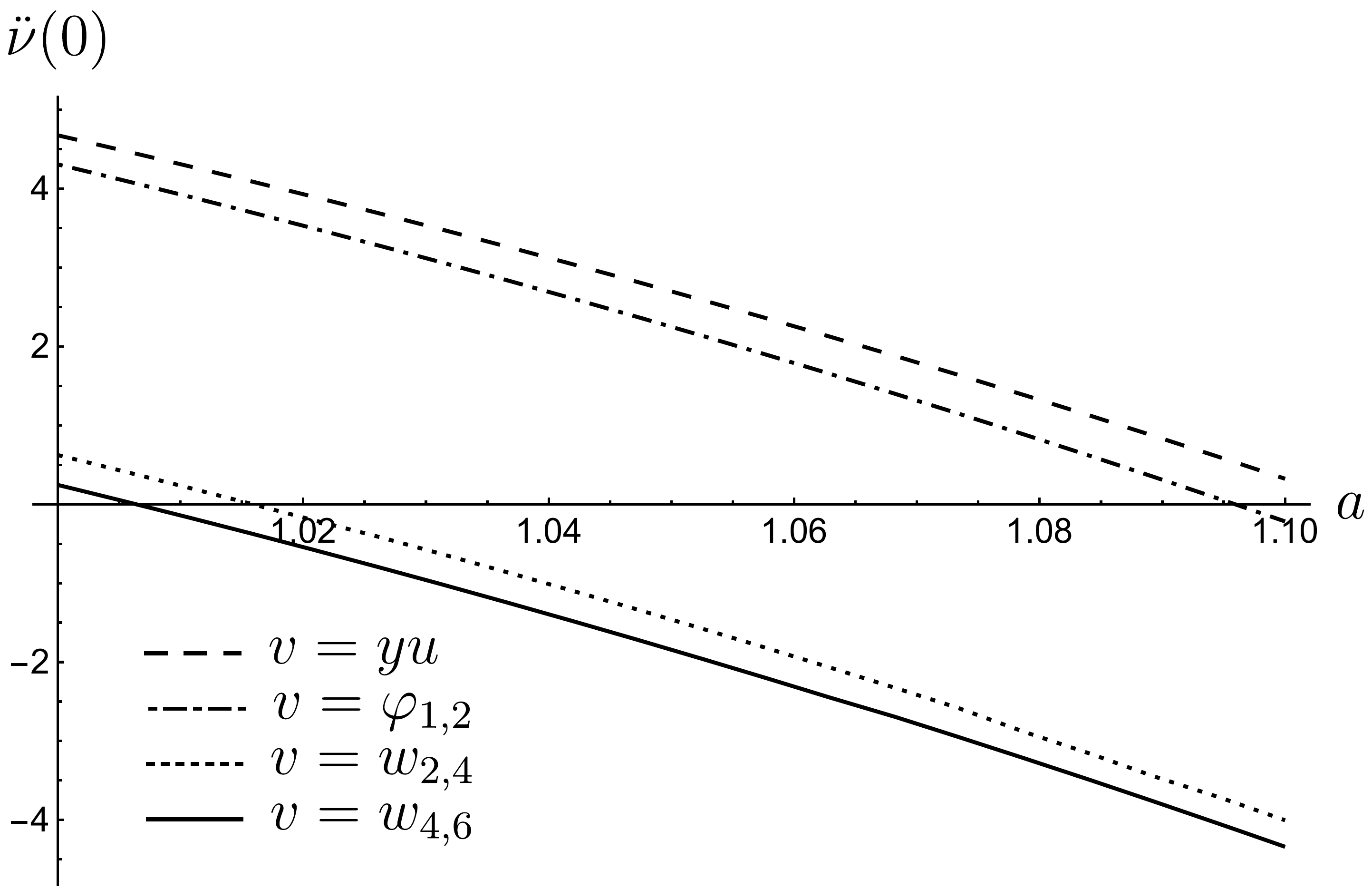}
		\caption{$f(x)=\sin\left(\frac{\pi x}{2}\right)$ and $\theta(y)=y$}
		\label{fig:sinxy}
	\end{minipage} 	
\end{figure} 
\begin{figure}[!h]
	\centering
	\begin{minipage}[t]{0.49\linewidth}
		\centering
		\includegraphics[width=0.95\linewidth]{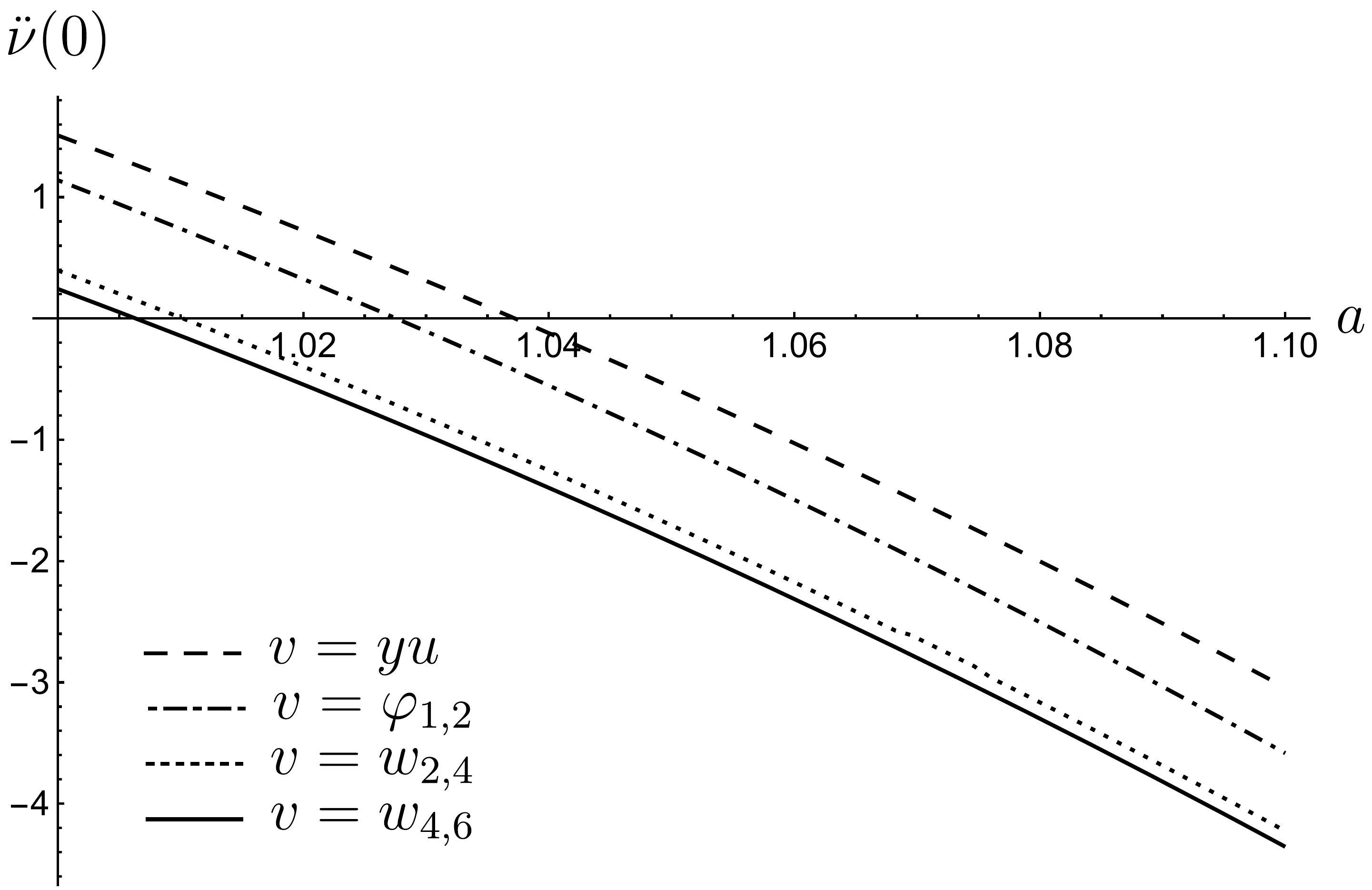}
		\caption{$f(x)=x$ and $\theta(y)=y$}
		\label{fig:xy}
	\end{minipage} 	
	\begin{minipage}[t]{0.49\linewidth}
		\centering
		\includegraphics[width=0.95\linewidth]{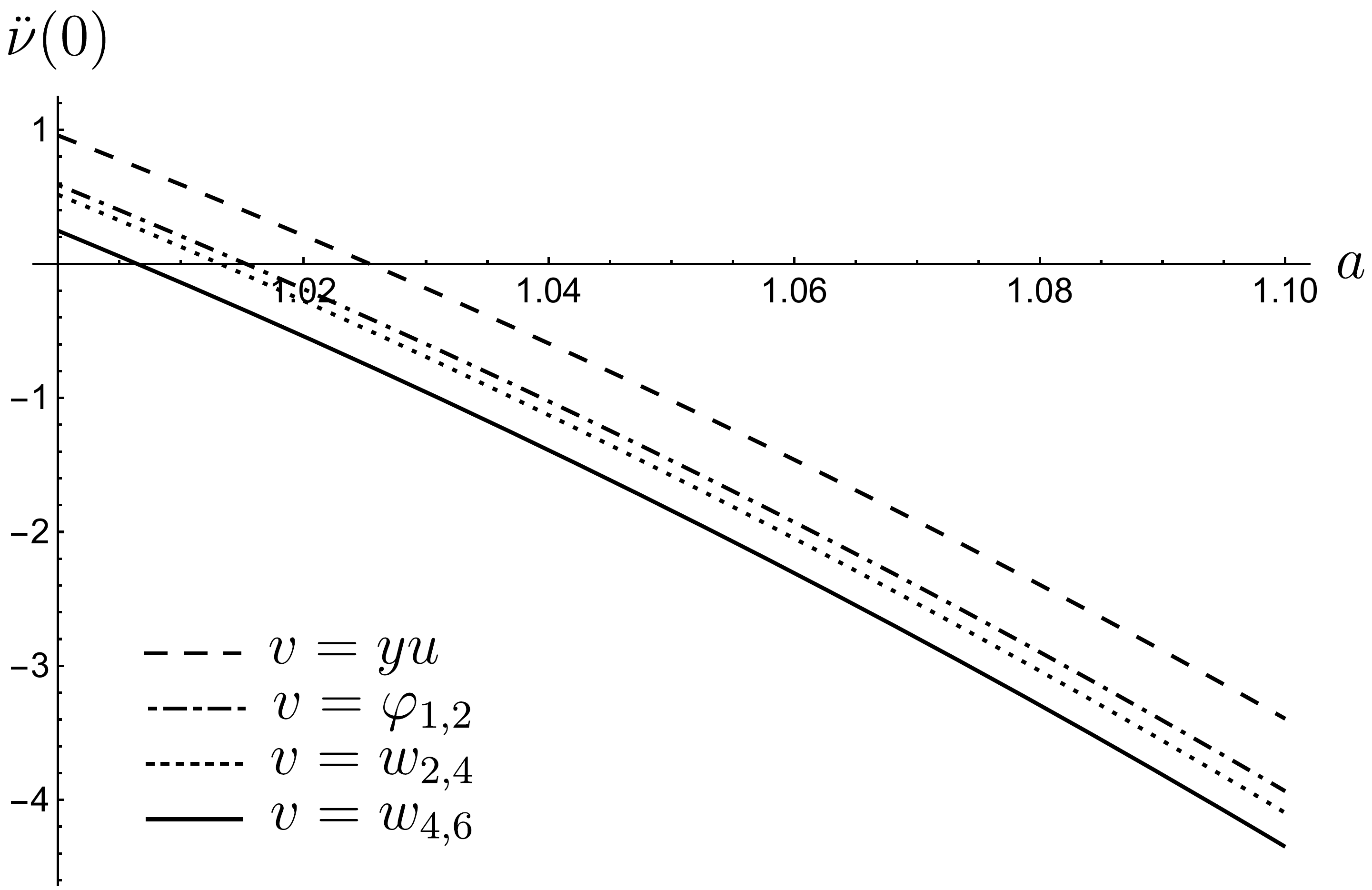}
		\caption{$f(x)=1-\cos\left(\frac{\pi x}{2}\right)$ and $\theta(y)=y$}
		\label{fig:cosxy}
	\end{minipage} 	
\end{figure}

Second, we consider the cases \ref{case:sinxsiny}-\ref{case:cosxsiny}. 
The behaviour of $\ddot{\nu}(0)$ with respect to $a \in (1,1.1)$ is depicted on Figures \ref{fig:sinxsiny}, \ref{fig:xsiny}, and \ref{fig:cosxsiny}.
In the case $f(x)=\sin\left(\frac{\pi x}{2}\right)$, we have 
$$
\int_{\Omega} u_x^2 (f' \theta)^2 \,dx + 
\int_{\Omega} u_x^2 (f \theta')^2 \,dx
=
\frac{\pi^4}{64}\left(a+\frac{3}{a}\right).
$$
Moreover, calculating the integrals in \eqref{eq:num_ww} and using \textsl{Mathematica{\tiny$^{\textregistered}$}} to find an analytic expression for the sum over $m$ and $k$, we get
$$
\left<w,w\right>_0 = 
\frac{\pi^3}{64 a}\left(3\pi + \pi a^2 -8 a \sqrt{4a^2-3} \cot\left(\frac{\pi \sqrt{4a^2-3}}{2a}\right)\right).
$$
In the case $f(x)=x$, we have
$$
\int_{\Omega} u_x^2 (f' \theta)^2 \,dx + 
\int_{\Omega} u_x^2 (f \theta')^2 \,dx
=
\frac{\pi^2(2\pi^2+8a^2+3)}{64a}
$$
and, applying \textsl{Mathematica$^{\tiny{\textregistered}}$} again, we get
$$
\left<w,w\right>_0 = 
\frac{\pi^2}{64 a}\left(3+8a^2+2\pi^2-8\pi a \sqrt{4a^2-3} \cot\left(\frac{\pi \sqrt{4a^2-3}}{2a}\right)\right).
$$
Thus, the expressions for $\ddot{\nu}(0)$ with optimal correctors are obtained analytically for all $a \geq 1$ for these choices of $f$. It is not hard to see that the corresponding values of $\ddot{\nu}(0)$ for these $f$'s coincide for any $a \geq 1$. 
This observation reflects the fact that the second-order shape variation of $\lambda_1(\Omega)$ depends only on the perturbation of the boundary, and does not depend on how the perturbation acts inside the domain, see \cite{novruzipierre}.

We again observe that $\ddot{\nu}(0) < 0$ for all sufficiently large $a > 1$ and any choice of $f$ and the corrector $v$.
For instance, $\ddot{\nu}(0) < 0$ for all $a \geq 1.01$ by choosing the nonoptimal corrector $v=w_{2,2}$. 
Moreover, in the cases \ref{case:sinxsiny} and \ref{case:xsiny}, $\ddot{\nu}(0) < 0$ for all $a > 1$ and $\ddot{\nu}(0)=0$ for $a=1$, when the optimal corrector $v=w$ is considered. This is naturally anticipated, since for $a=1$, the perturbation driven by $\theta(y)=\sin\left(\frac{\pi y}{2 a}\right)$ changes the right boundary according to the behaviour of the nodal set of the second eigenfunction $\varphi_{2,1}+\varepsilon \varphi_{1,2}$ in the square $(0,2)\times(-1,1)$ for sufficiently small $\varepsilon>0$. That is, if $a=1$, then $\lambda_1(\Omega_t)$ will be unchanged.
We also see that $f(x)=1-\cos\left(\frac{\pi x}{2}\right)$ gives better values of $\ddot{\nu}(0)$ for, at least, $v=yu$ and $v=\varphi_{1,2}$. 

\begin{figure}[!h]
	\centering
	\begin{minipage}[t]{0.49\linewidth}
		\centering
		\includegraphics[width=0.35\linewidth]{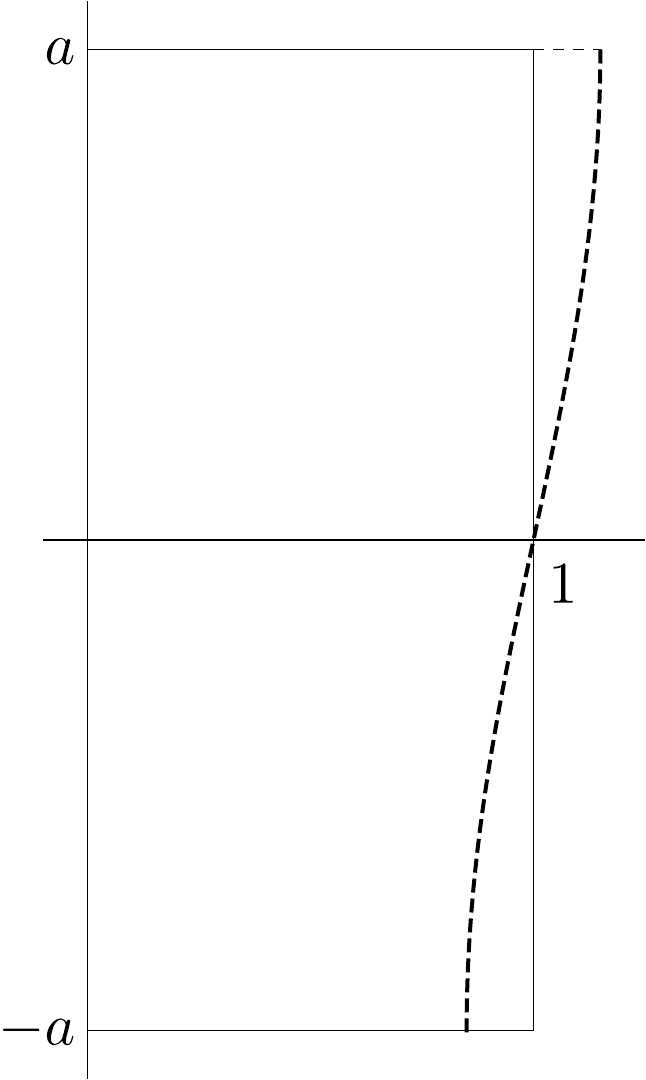}
		\caption{Perturbation by $\theta(y)=\sin\left(\frac{\pi y}{2 a}\right)$}
		\label{fig:theta=siny}
	\end{minipage} 	
	\begin{minipage}[t]{0.49\linewidth}
		\centering
		\includegraphics[width=0.95\linewidth]{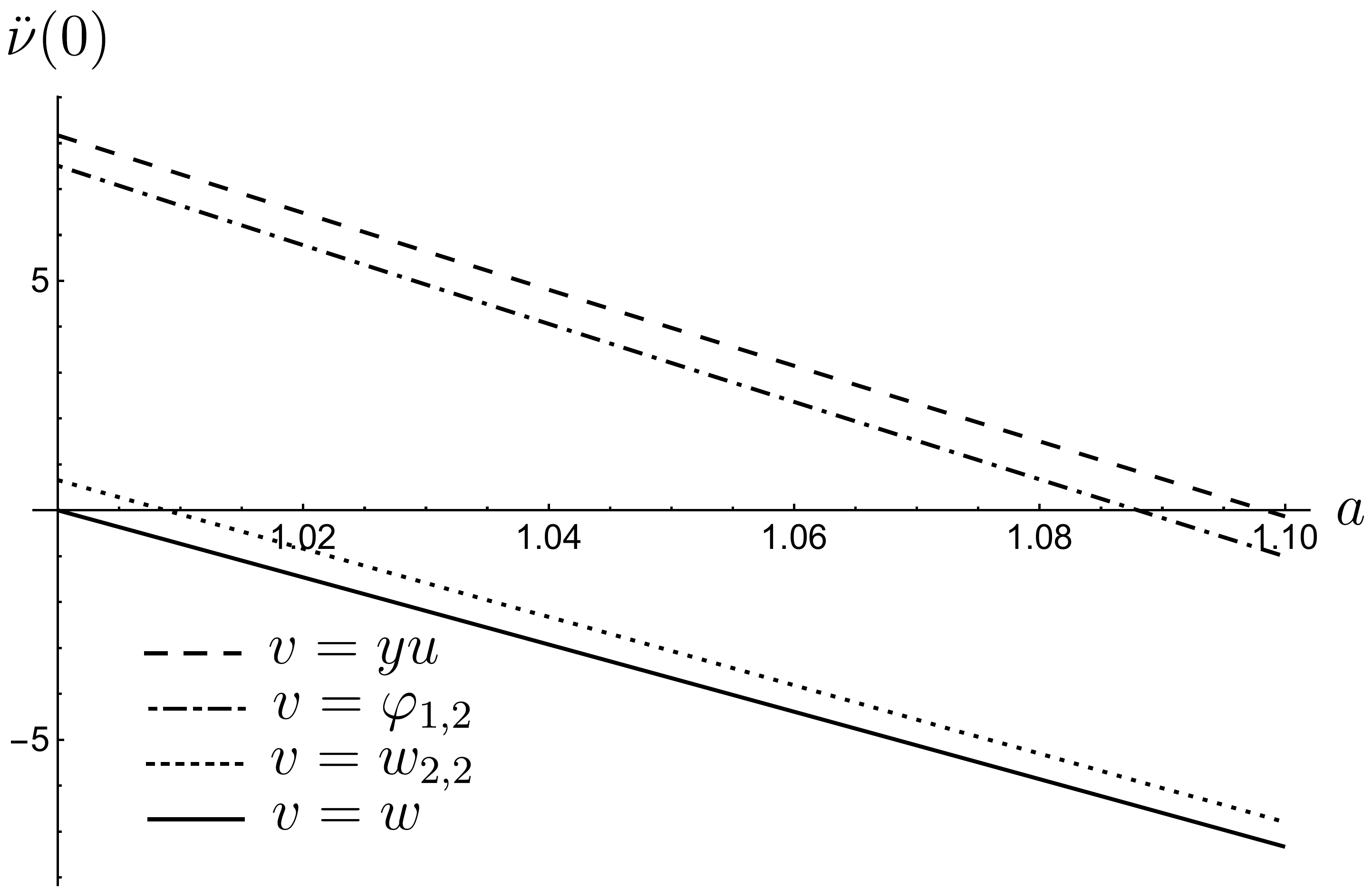}
		\caption{$f(x)=\sin\left(\frac{\pi x}{2}\right)$ and $\theta(y)=\sin\left(\frac{\pi y}{2 a}\right)$}
		\label{fig:sinxsiny}
	\end{minipage} 	
\end{figure} 
\begin{figure}[!h]
	\centering
	\begin{minipage}[t]{0.49\linewidth}
		\centering
		\includegraphics[width=0.95\linewidth]{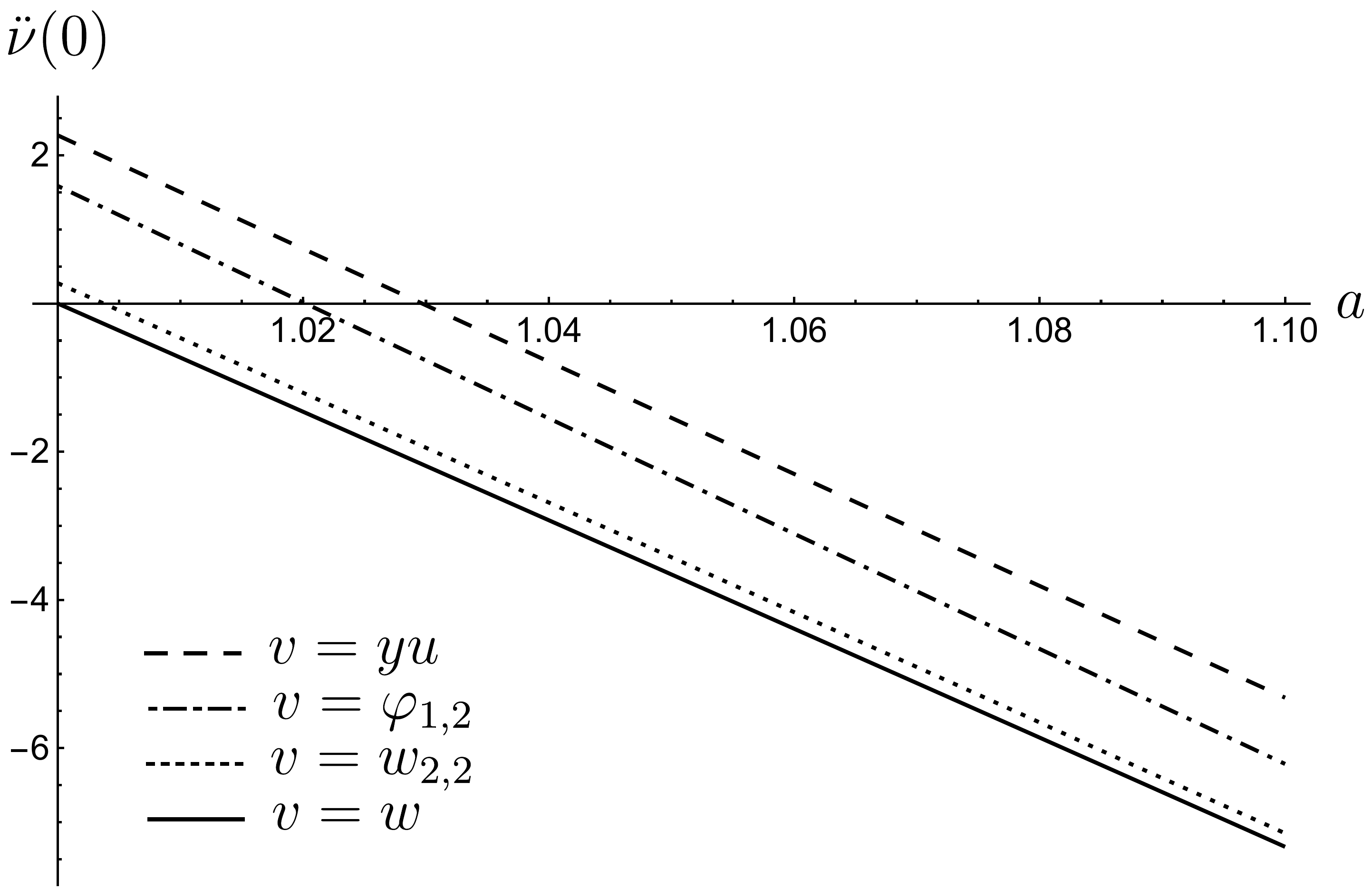}
		\caption{$f(x)=x$ and $\theta(y)=\sin\left(\frac{\pi y}{2 a}\right)$}
		\label{fig:xsiny}
	\end{minipage} 	
	\begin{minipage}[t]{0.49\linewidth}
		\centering
		\includegraphics[width=0.95\linewidth]{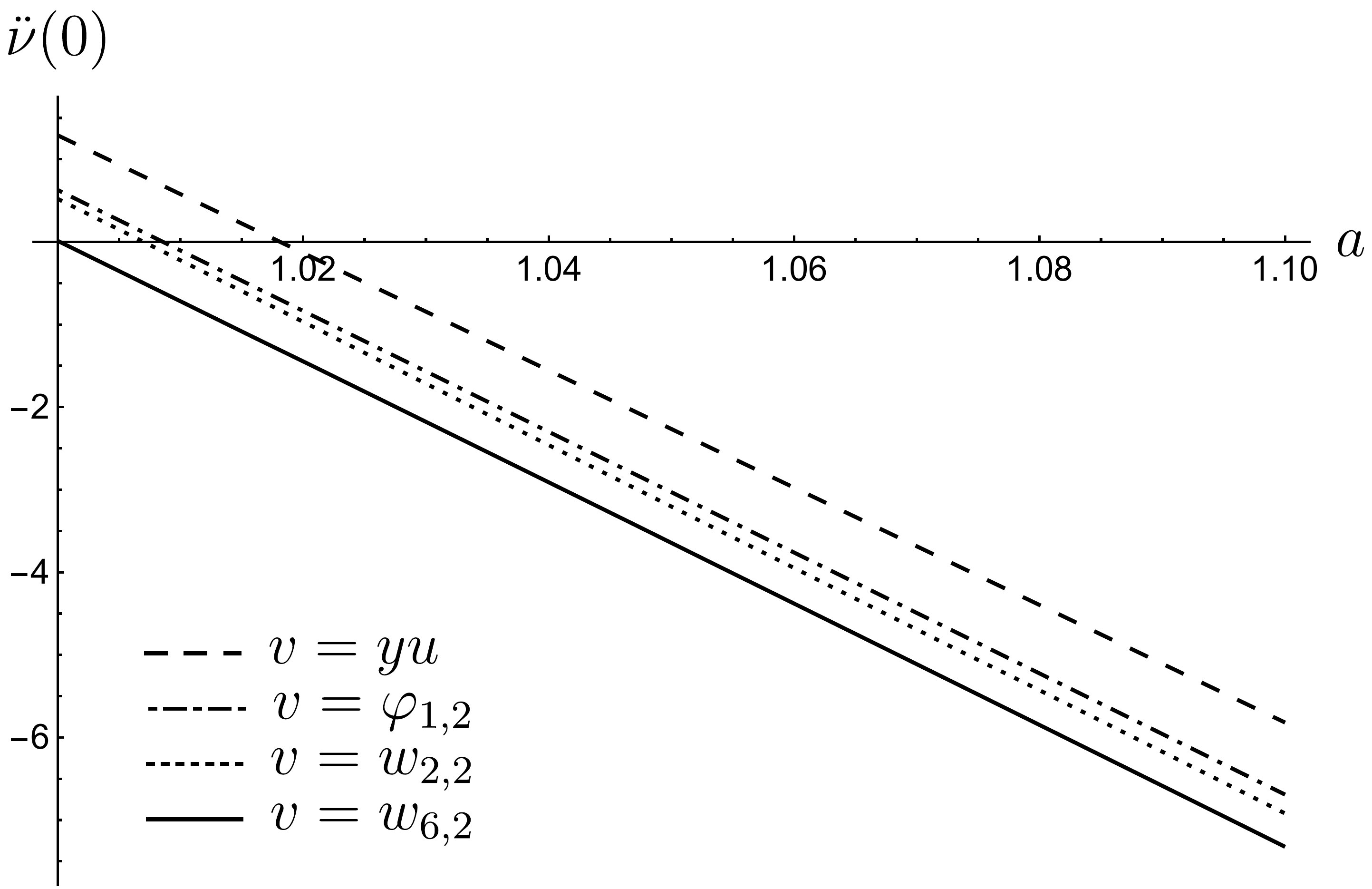}
		\caption{$f(x)=1-\cos\left(\frac{\pi x}{2}\right)$ and $\theta(y)=\sin\left(\frac{\pi y}{2 a}\right)$}
		\label{fig:cosxsiny}
	\end{minipage} 	
\end{figure}

\bigskip
\noindent
{\bf Acknowledgements.}
The first author was supported by the project LO1506 of the Czech Ministry of Education, Youth and Sports, and by the grant 18-03253S of the Grant Agency of the Czech Republic. 
The second author was supported by the grant 17-01-00678 of Russian Foundation for Basic Research.
The second author wishes to thank the University of West Bohemia, where this research was started, for the invitation and hospitality.
The authors would like to thank A.I.\ Nazarov for stimulating discussions and valuable advices. 
Moreover, the authors are grateful to the anonymous referee whose suggestions and remarks led to the substantial improvement of the manuscript.

\addcontentsline{toc}{section}{\refname}
\small

\end{document}